\newcommand{\C}{\mathbb{C}}
\newcommand{\F}{\mathbb{F}}
\newcommand{\PR}{\mathbb{P}}
\newtheorem{theorem}{Theorem}[section] 
\newtheorem{proposition}[theorem]{Proposition} 
\newtheorem{corollary}[theorem]{Corollary} 
\newtheorem{lemma}[theorem]{Lemma} 
\newcommand{\res}{\mathrm{Res}}
\DeclareMathOperator{\Tr}{Tr}
\DeclareMathOperator{\Frob}{Frob}
\DeclareMathOperator{\Prob}{Prob}
\DeclareMathOperator{\PGL}{PGL}
\DeclareMathOperator{\SL}{SL}
\numberwithin{equation}{section}
\begin{document}
\title{Statistics for traces of cyclic trigonal curves over finite fields}

\author{Alina Bucur\footnote{Institute for Advanced Study,
\texttt{alina@math.ias.edu}}, Chantal David\footnote{Concordia University,
\texttt{cdavid@mathstat.concordia.ca}}, Brooke Feigon\footnote{University of
Toronto, \texttt{bfeigon@math.toronto.edu}} and Matilde
Lal\'in\footnote{University of Alberta, \texttt{mlalin@math.ualberta.ca}}}



\date\today 



\maketitle

\begin{abstract}

We study the variation of the trace of the Frobenius endomorphism associated to a cyclic trigonal curve of genus $g$ over $\mathbb F_q$ as the curve varies in an irreducible component of the moduli space. We show that for $q$ fixed and $g$ increasing, the limiting distribution of the trace of Frobenius equals the sum of $q+1$ independent random variables taking the value $0$ with probability $2/(q+2)$ and $1, e^{2\pi i/3} , e^{4 \pi i /3}$ each with probability $q/(3(q+2)).$ This extends the work of Kurlberg and Rudnick who considered the same limit for hyperelliptic curves. We also show that when both $g$ and $q$ go to infinity, the normalized trace has a standard complex Gaussian distribution and how to generalize these results to $p$-fold covers of the projective line. 
\end{abstract}

MSC: 11G20, 11T55, 11G25 

\begin{center}
\emph{To epsilon}
\end{center}

\setcounter{tocdepth}{1}
\tableofcontents
\section{Introduction}
\label{introduction}

Let $\F_q$ be the finite field with $q$ elements.
For any smooth projective curve $C$ of genus $g$ over $\F_q$,
let $Z_{C}(T)$ be its zeta function. It was shown by Weil \cite{We} that
\[
Z_{C}(T) = \frac{P_C(T)}{(1-T)(1-qT)},
\]
with $$P_C(T) = \prod_{j=1}^{2g} (1 - \alpha_{j}(C) T),$$
and
\[\vert \alpha_j(C) \vert = q^{1/2}, \; \mbox{for $1 \leq j \leq 2g.$}\]
The trace of
the Frobenius endomorphism (acting on the first cohomology group $H^1$) is
then
\[
\Tr(\Frob_C) = \sum_{j=1}^{2g} \alpha_j(C).
\]

We study in this paper the variation of the
trace of the Frobenius endomorphism $\Frob_C$ over moduli spaces of
cyclic trigonal curves of genus $g$ when $g$ tends to infinity.
This
extends the work of Kurlberg and Rudnick who considered the same
limit for the case of hyperelliptic curves \cite{KR}. All of the results extend further
to the case of cyclic $p$-fold covers of $\mathbb{P}^1$ for $p$ prime, as we indicate briefly
in Section~\ref{gen}. However, we have chosen to focus on the trigonal case
because it exhibits all of the essential features of the general case but with
a somewhat lighter notational load. (Note that some of these features do not
appear in \cite{KR}, including reducibility of the moduli space, complex-valued random variables,
and use of the Tauberian theorem.)

Before describing our main results, we describe a modified version of the main
theorem of \cite{KR}.
In the work of Kurlberg and
Rudnick, the statistics are computed for the family of hyperelliptic
curves $Y^2=F(X)$ by running over all square-free polynomials $F$ of
a fixed degree $d$. This is not the same as running over the moduli
space of hyperelliptic curves of a fixed genus, as not all points on
the moduli space appear with the same multiplicity in this family.
Also, the results of \cite{KR} are about the affine trace of
the hyperelliptic curves, which differ slightly from $\Tr(\Frob_C)$.
The geometric version of the work of Kurlberg and Rudnick is then
the following theorem, which is proved in Section \ref{KRrevisited}.

\begin{theorem} \label{KRrevisitedthm}
If $q$ is fixed and $g\rightarrow \infty$, the distribution of the
trace of the Frobenius endomorphism associated to $C$ as $C$ ranges
over the moduli space $\mathcal H_g$ of hyperelliptic curves of
genus $g$ defined over $\mathbb F_q$ is that of a sum of $q+1$
i.i.d. random variables $X_1, \ldots, X_{q+1}$ that take
the value $0$ with probability $1/(q+1)$ and $\pm 1$ each with
probability $1/(2(1+q^{-1})).$ More precisely,  for any $s \in \mathbb
Z$ with $|s|\leq q+1$, we have
 \[\frac{\left|\left\{C \in \mathcal H_g:  \Tr(\Frob_C) = -s\right\}\right|'}{\left|\mathcal H_g\right|'}  = \Prob
 \left(\sum_{i=1}^{q+1} X_i = s \right) \left( 1+ O\left( q^{(3q -2-2g)/2}   \right)\right). \]
\end{theorem}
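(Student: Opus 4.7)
The plan is to translate the question about the moduli space $\mathcal{H}_g$ into a question about uniformly random square-free polynomials. A cyclic double cover of $\PR^1$ of genus $g$ has a Weierstrass model $Y^2 = F(X)$ with $F \in \F_q[X]$ monic and square-free of degree $d \in \{2g+1,2g+2\}$, and two such models give isomorphic curves exactly when they are related by a $\PGL_2(\F_q)$ change of variable together with a square scaling. The primed counts $|\cdot|'$ weight each $C$ by $1/|\mathrm{Aut}(C)|$, which makes the projection from Weierstrass models to $\mathcal{H}_g$ equidistributed, so it suffices to study the distribution of
\[
\Tr(\Frob_C) \;=\; -\sum_{x \in \PR^1(\F_q)} \lambda_x(F)
\]
as $F$ ranges uniformly over such polynomials. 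Here $\lambda_x(F) \in \{-1,0,+1\}$ records the splitting behavior at $x$: for affine $x$ one takes $\lambda_x(F) = \chi_2(F(x))$ with $\chi_2(0):=0$, and the value at $\infty$ is read off from the leading coefficient of $F$ and the parity of $d$.

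For a single point $x$, I would compute the marginal distribution of $\lambda_x(F)$ using the generating function $\sum_d \#\{F \text{ monic sqfree},\ \deg F = d\}\,T^d = (1-qT^2)/(1-qT)$ together with its analogue restricted to $F$ with $F(x)=0$. This yields $\Prob(\lambda_x(F)=0)=1/(q+1)+O(q^{-g})$ and, by $\F_q^{\times}$-symmetry among the nonzero values, $\Prob(\lambda_x(F)=\pm 1)=q/(2(q+1))+O(q^{-g})$, matching the law of each $X_i$.

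The heart of the proof is a joint density estimate. For the $q+1$ distinct points $x_0, \ldots, x_q$ of $\PR^1(\F_q)$ and any prescribed values $(a_0,\ldots,a_q) \in \F_q^{q+1}$, I would show
\[
\frac{\#\{F \text{ sqfree}:\, F(x_i)=a_i\ \forall i\}}{\#\{F \text{ sqfree}\}} \;=\; \prod_i \rho(a_i) \,+\, O\bigl(q^{(3q-2-2g)/2}\bigr),
\]
where $\rho(0)=1/(q+1)$ and $\rho(a)=q/(q^2-1)$ for $a\neq 0$. The main term comes from the Chinese Remainder Theorem applied to $F \bmod \prod_i(X-x_i)$; the $q+1$ linear constraints cut out a codimension $q+1$ affine subspace of polynomials of degree $d$, and one identifies the leftover density with $\rho$. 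The error term comes from the Möbius sieve $\mathbf{1}_{F \text{ sqfree}} = \sum_{D^2 \mid F}\mu(D)$: the contribution from squareful $D$ of moderate degree must be bounded compatibly with the residue constraints at the $x_i$, and it is this bookkeeping that produces the stated power of $q$. Summing the joint density over $(a_0,\ldots,a_q)$ with $\sum_i \chi_2^*(a_i)=s$ yields exactly $\Prob(\sum_{i=1}^{q+1}X_i=s)$ in the main term.

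The main obstacle is the sieve estimate: the error must be uniform in the prescribed $(a_i)$ so that it survives summation over the $q^{q+1}$ value patterns. Since $q$ is fixed while $g\to\infty$, the $q$-dependent combinatorial constants can be absorbed into the final exponent $3q-2-2g$, but making this trade-off explicit and optimizing the sieve cut-off against the target exponent is the technical heart of the argument.
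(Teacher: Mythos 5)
Your route is essentially the paper's (Section~\ref{KRrevisited}): pass from $\mathcal H_g$ to square-free polynomials of degree $2g+1$ or $2g+2$, prescribe the $\chi_2$-values at all $q+1$ points of $\PR^1(\F_q)$ (with the point at infinity governed by the degree parity and the leading coefficient), prove the joint count by the M\"obius sieve plus CRT --- this is exactly Lemma~\ref{KRproposition6}, which the paper cites from \cite{KR} rather than reproving --- and then sum over the value patterns with prescribed character sum; your densities $\rho(0)=1/(q+1)$, $\rho(a)=q/(q^2-1)$ agree with the paper's main term after grouping the nonzero values by square class.

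There is, however, one point that as written would fail. You restrict to \emph{monic} square-free $F$, yet read the value at infinity off the leading coefficient; these are incompatible. If the family really consists of monic $F$ of degrees $2g+1,2g+2$, then $\lambda_\infty(F)\in\{0,+1\}$ and never $-1$, so the infinity coordinate does not have the law of the $X_i$; moreover not every hyperelliptic curve has a monic model (the leading coefficients of models of a fixed curve are, up to squares, its nonzero values on $\PR^1(\F_q)$, so curves taking only non-square or zero values there --- which carry positive limiting probability --- are missed entirely), so the projection onto $\mathcal H_g$ would not be equidistributed. The paper avoids this by ranging over $\widehat{\mathcal F}_{2g+1}\cup\widehat{\mathcal F}_{2g+2}$, i.e.\ square-free polynomials with \emph{arbitrary} leading coefficient, for which each moduli point (weighted by $1/|\mbox{Aut}(C)|$) is hit exactly $q(q^2-1)$ times and infinity provably behaves like an affine point; that is the fix your argument needs. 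A smaller bookkeeping remark: the paper keeps the per-pattern error multiplicative relative to the main term, so the sum over the at most $3^{q+1}$ character patterns inherits the same relative error with a constant independent of $q$ (as the paper claims for all its results); your additive error summed over patterns only works because $q$ is fixed, and the $q$-dependent combinatorial factor cannot literally be ``absorbed into the exponent'' $3q-2-2g$ --- it goes into the implied constant, which is harmless for Theorem~\ref{KRrevisitedthm} but loses the $q$-uniformity the paper's version of the argument provides.
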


In the last theorem, and in the rest of the paper, the $'$ notation, applied both to summation and cardinality,
means that curves $C$ on the moduli spaces are  counted with the
usual weights $1/|\mbox{Aut}(C)|$.

For the rest of the paper, we assume that $q \equiv 1
(\mbox{mod } 3)$. For any cube-free polynomial $F \in \F_q[X]$, let $C_F$ be
the cyclic trigonal curve \begin{eqnarray} \label{curveCF} C_F : Y^3
= F(X).\end{eqnarray}
For cyclic trigonal curves, the genus is not a function of the
degree of the polynomial $F$ in (\ref{curveCF}), as it is for the
hyperelliptic curves $Y^2 = F(X)$. Also, the moduli space
 of cyclic trigonal curves of genus $g$ is not
irreducible, and we look at the distribution of $\Tr(\Frob_C)$ on
each irreducible component (Theorem \ref{componentd1d2}). It turns
out to be independent of the component when certain conditions are met.

Let $\mathcal H_{g,3}$ denote the moduli space of cyclic trigonal
curves of genus $g$.  Its irreducible components are determined by a finer geometric invariant, namely the signature. For $d_1+2d_2 \equiv 0 \pmod 3$, let $\mathcal
H^{(d_1, d_2)}$ be the component of the moduli space with curves of
signature $(r,s)= \left( (2d_1+d_2-3)/3,
(d_1+2d_2-3)/3 \right)$. Then
\[
\mathcal H_{g,3}= \bigcup_{\substack{d_{1}+2d_{2} \equiv 0 \pmod
3,\\ g=d_1+d_{2}-2}} \mathcal H^{(d_{1},d_{2})},
\]
where the union is disjoint and each component $\mathcal H^{(d_{1},d_{2})}$ is
irreducible.

 Fix a cubic character $\chi_3$ of $\mathbb F_q$ (recall that $q
\equiv 1 \,(\mathrm{mod}\, 3)$). It takes values $0, 1, \omega$ and
$\omega^2$, where $\omega$ is a primitive third root of unity in
$\mathbb C.$ Each cyclic trigonal curve $C$ is endowed with a cyclic
order 3 automorphism that splits the first cohomology group of $C$
into two subspaces,  $H^1_{\chi_3}$ and $H^1_{\overline \chi_3}$, on
which the automorphism acts via  $\chi_3$ or via its conjugate. Since this
automorphism commutes with the action of the Frobenius, it follows
that
\[\Tr(\Frob_C |_{H^1_{\chi_3}} )= \overline{\Tr(\Frob_C |_{H^1_{\overline
\chi_3}})} . \]
So it is enough to study the distribution of the trace
of the Frobenius on one of these two subspaces.

\begin{theorem}
\label{componentd1d2} If $q$ is fixed and $d_1, d_2 \rightarrow
\infty$, the distribution of the trace of the Frobenius endomorphism
associated to $C$ as $C$ ranges over the component $\mathcal
H^{(d_1, d_2)}$ of cyclic trigonal curves defined over $\mathbb F_q$
is that of the sum of $q+1$ i.i.d.\ random
variables $X_1, \ldots, X_{q+1}$,  where each $X_i$ takes
the value $0$ with probability $2/(q+2)$ and $1, \omega, \omega^2$ each
with probability $q/(3(q+2)).$ More precisely, for any $s \in \mathbb
Z[\omega] \subset \mathbb C$ with $|s|\leq q+1$, we have for any $1>\varepsilon >0$,
\begin{eqnarray*}
\frac{\left|\left\{C \in \mathcal H^{(d_1, d_2)}:\Tr
(\Frob_C|_{H^1_{\chi_3}})=-s \right\}\right|'}{\left|\mathcal H^{(d_1,d_2)}\right|'} = \Prob \left( \sum_{i=1}^{q+1} X_i =  s \right)\left( 1 + O
\left(q^{-(1-\varepsilon)d_2+ q}+q^{-(d_1-3q)/2}\right)
\right).
\end{eqnarray*}
\end{theorem}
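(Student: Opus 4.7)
The plan is to adapt the strategy of Kurlberg--Rudnick \cite{KR} to the trigonal setting, with the cubic character $\chi_3$ in place of the Legendre symbol. The starting point is the identity $\#C_F(\F_q) = q+1 - \Tr(\Frob_C)$ combined with the splitting $H^1 = H^1_{\chi_3} \oplus H^1_{\bar\chi_3}$ (whose traces are complex conjugates), which yields
\[
\Tr(\Frob_C|_{H^1_{\chi_3}}) = -\sum_{x \in \mathbb{P}^1(\F_q)} \chi_3(F(x)),
\]
with appropriate local conventions at $\infty$ and at zeros of $F$ depending on the signature $(d_1,d_2)$. Each $C \in \mathcal H^{(d_1,d_2)}$ corresponds (up to a scalar) to a pair $(F_1, F_2)$ of monic squarefree coprime polynomials of degrees $d_1, d_2$, with $F = F_1 F_2^2$; hence $\chi_3(F(x)) = \chi_3(F_1(x)) \bar\chi_3(F_2(x))$, and the problem reduces to analyzing the joint distribution of this complex random vector indexed by $x \in \mathbb{P}^1(\F_q)$.

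\textbf{Local distribution.} For fixed $x$, the generating function for monic squarefree coprime pairs has Euler factor $1 + t_1^{\deg P} + t_2^{\deg P}$ at each prime $P$ of $\F_q[X]$. At a degree-one prime, evaluation at the dominant singularity $t_1 = t_2 = 1/q$ gives the asymptotic marginal densities $\Prob(F_1(x) = 0) = \Prob(F_2(x) = 0) = 1/(q+2)$ (disjoint events by coprimality), so $\chi_3(F(x)) = 0$ with probability $2/(q+2)$. Conditional on $F_1(x), F_2(x) \in \F_q^\times$, the two values are independent uniform on $\F_q^\times$, hence $\chi_3(F_1(x))\bar\chi_3(F_2(x))$ is uniform on $\{1,\omega,\omega^2\}$, giving weight $q/(3(q+2))$ to each; these are exactly the marginals of the $X_i$.

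\textbf{Joint distribution and Fourier inversion.} For any subset $S \subset \mathbb{P}^1(\F_q)$ and prescribed values $(\eta_x)_{x \in S}$, expand the indicator $\prod_{x\in S}\mathbf{1}_{\chi_3(F(x))=\eta_x}$ via orthogonality of $\chi_3$, reducing the joint density to a sum of character sums over squarefree coprime pairs with prescribed local behavior at $S$. The Euler product factorizes across distinct primes, forcing the main term to be the product of local densities and yielding asymptotic independence of the $\chi_3(F(x))$ across distinct $x$. A Fourier inversion on the lattice $\mathbb{Z}[\omega]$ then converts the joint law of $(\chi_3(F(x)))_x$ into the law of $\Tr(\Frob_C|_{H^1_{\chi_3}})$, matching $\Prob\bigl(\sum_{i=1}^{q+1} X_i = s\bigr)$.

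\textbf{Main obstacle.} The hard part is controlling the error uniformly over the $O(4^{q+1})$ configurations $(\eta_x)_{x \in \mathbb{P}^1(\F_q)}$. Two different analytic estimates enter: character sums involving the squarefree factor $F_1$ exhibit square-root cancellation via the Weil/Deligne bounds for the $L$-function of a $\chi_3$-twist, producing the $q^{-(d_1 - 3q)/2}$ contribution (the $q^{3q/2}$ factor absorbing the combinatorial loss from the sum over configurations); sums involving $F_2$, which enters $F$ squared and couples with the coprimality condition, require a Tauberian argument applied to the associated generating series and produce the $q^{-(1-\varepsilon)d_2+q}$ contribution. Carrying out both estimates uniformly while maintaining the coprimality constraint between $F_1$ and $F_2$, and keeping the Tauberian error constant uniform in $\varepsilon$, is the most technical step of the argument.
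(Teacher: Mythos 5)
There is a genuine gap, and it sits precisely in the step that makes this theorem ``geometric'': your reduction of the moduli count to polynomial counting is wrong as stated. You parametrize $\mathcal H^{(d_1,d_2)}$ by monic squarefree coprime pairs $(F_1,F_2)$ of exact degrees $(d_1,d_2)$ up to a scalar. First, this does not count each curve with the weight $1/|\mathrm{Aut}(C)|$: two such pairs give isomorphic curves whenever they differ by an element of $\PGL_2(\F_q)$, and the number of exact-degree models a given curve possesses depends on how many of its $\F_q$-rational branch points can be moved away from infinity, so different curves are hit with different multiplicities. Second, and fatally for the statement, every model of exact degree $(d_1,d_2)$ has $F(x_{q+1})$ equal to its (nonzero) leading coefficient, so $\chi_3(F(\infty))\in\{1,\omega,\omega^2\}$ and is never $0$; with your parametrization the point at infinity would contribute a variable uniform on the cube classes, and the limiting law would be that of $X_1+\cdots+X_q+Y$ with $Y$ uniform on $\{1,\omega,\omega^2\}$ --- not $\Prob\bigl(\sum_{i=1}^{q+1}X_i=s\bigr)$. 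The paper avoids this by summing over \emph{all} affine models $Y^3=G$ with $G\in\widehat{\mathcal F}_{[d_1,d_2]}=\widehat{\mathcal F}_{(d_1,d_2)}\cup\widehat{\mathcal F}_{(d_1-1,d_2)}\cup\widehat{\mathcal F}_{(d_1,d_2-1)}$ (arbitrary leading coefficient, including models where a branch point sits at infinity); for $g\geq 5$ each curve of the component is then counted exactly $q(q^2-1)/|\mathrm{Aut}(C)|$ times, and the two-case analysis at infinity ($\varepsilon_{q+1}=0$ coming only from the degree-dropped families, $\varepsilon_{q+1}\neq 0$ only from the full-degree family) is exactly what makes the infinite point behave like the affine ones. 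Your phrase ``appropriate local conventions at $\infty$ depending on the signature'' cannot substitute for this: within one component the convention depends on which of the three families the particular model lies in, not on $(d_1,d_2)$.

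Separately, your analytic engine differs from the paper's and is only gestured at. The paper never invokes Weil/Deligne bounds: it conditions directly on prescribed values $F(x_i)=a_i$, counts squarefree $F_1$ coprime to $F_2$ by inclusion--exclusion (the $q^{-(d_1-m)/2+q}$ term is a trivial tail bound, not square-root cancellation of a character sum), evaluates $\sum_{\deg F_2=d_2}\mu^2(F_2)\prod_{P\mid F_2}(1+|P|^{-1})^{-1}$ by the function-field Tauberian theorem with an error constant made uniform in $q$ (this is a main-term evaluation forced by the coprimality coupling, not an error estimate for $F_2$-sums as you assign it), and obtains the factor $(2/(q+2))^m$ by distributing the forced zeros between $F_1$ and $F_2$. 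Your orthogonality-plus-Weil route for the joint law is plausible in principle, but as written it does not handle the vanishing conditions (which multiplicative characters do not detect), the coprimality constraint, or the claimed exponents; you acknowledge rather than execute the uniform error analysis. Even granting that sketch, the proposal does not prove the stated theorem because of the moduli/infinity reduction above.
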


It may not be clear where the  probabilities attached to the random variables come from, but they are quite natural, as the heuristic in Section \ref{htri} shows.

We remark that in Theorem \ref{componentd1d2}, and in all the results in our paper,
the implied constants in the error terms are independent of $q$, even when $q$ is fixed. Then, as was done in \cite{KR} for hyperelliptic curves,
we
can also study the case where $q$ and
$d_1, d_2$ tend to infinity. Since the trace takes complex values, the limiting distribution will be the  \emph{complex} Gaussian with mean $0$ and variance $1$, instead of the usual real-valued Gaussian one gets for hyperelliptic curves.
We first compute the moments of $\Tr (\Frob_C|_{H^1_{\chi_3}})/\sqrt{q+1}$ and
compare them with the moments of the normalized sum of the i.i.d.~random variables of Theorem \ref{componentd1d2}.

\begin{theorem}\label{momentsthm}
For any positive integers $j$ and $k$, let $M_{j,k}(q, (d_1, d_2))$ be the
moments
\begin{eqnarray*}
M_{j,k}(q, (d_1, d_2)) &=& \frac{1}{\left|\mathcal{H}^{(d_1, d_2)}\right|^{\;'}}   \sideset{}{'}{\sum}_{C \in \mathcal{H}^{(d_1, d_2)}}
\left( \frac{-\Tr
(\Frob_C|_{H^1_{\chi_3}})}{\sqrt{q+1}} \right)^j \left( \frac{-\Tr
(\Frob_C|_{H^1_{\overline\chi_3}})}{\sqrt{q+1}} \right)^k.
\end{eqnarray*}
Let $\varepsilon$ and  $X_1, \dots, X_{q+1}$ be as in Theorem \ref{componentd1d2}. Then
\begin{eqnarray*}
M_{j,k}(q,(d_1,d_2)) = \mathbb E
\left( \left( \frac{1}{\sqrt{q+1}} \sum_{i=1}^{q+1} X_i \right)^j\left( \frac{1}{\sqrt{q+1}} \sum_{i=1}^{q+1} \overline{X_i} \right)^k
\right)
\left( 1 + O \left(
q^{-(1-\varepsilon)d_2 + \varepsilon (j+k) }+
q^{-d_1/2+j+k} \right)\right).
\end{eqnarray*}
\end{theorem}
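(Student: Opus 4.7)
The plan is to express the trace on $H^1_{\chi_3}$ as a character sum and then evaluate the moment by swapping the order of summation. For $C_F : Y^3=F(X)$, the factorization of the zeta function along the cyclic automorphism gives
\[
-\Tr(\Frob_C|_{H^1_{\chi_3}}) = \sum_{x \in \mathbb F_q} \chi_3(F(x)) + O(1),
\]
where the bounded term depends only on the signature $(d_1,d_2)$ and encodes the contribution at infinity. Substituting into $M_{j,k}$ and expanding the $j$th and $k$th powers of the two traces reduces the problem to
\[
M_{j,k}(q,(d_1,d_2)) = \frac{1}{(q+1)^{(j+k)/2}} \sum_{\vec x \in \mathbb F_q^j} \sum_{\vec y \in \mathbb F_q^k} \frac{1}{|\mathcal H^{(d_1,d_2)}|'} \sideset{}{'}{\sum}_{C_F \in \mathcal H^{(d_1,d_2)}} \prod_{x} \chi_3(F(x))^{m_x - n_x}
\]
plus lower-order contributions, where $m_x, n_x$ are the multiplicities of $x$ in $\vec x, \vec y$.

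Writing $F = F_1F_2^2$ with $F_1, F_2$ squarefree coprime of degrees $d_1, d_2$, I would use the identity $\chi_3(F(x)) = \chi_3(F_1(x))\overline{\chi_3}(F_2(x))$ to factor the inner sum, after Möbius inversion of the coprimality condition, into a product of two character sums — one over squarefree $F_1$, one over squarefree $F_2$. Each is a Dirichlet character sum in $\mathbb F_q[X]$ with conductor dividing $\prod_{x:\, m_x \not\equiv n_x \!\!\!\pmod{\!3}}(X-x)$. The character is principal precisely when $m_x \equiv n_x \pmod 3$ for every $x$; these are exactly the configurations that survive in the i.i.d.\ moment, since $\mathbb E[X^m\overline X^n] = (q/(q+2))\,\mathbf 1_{m\equiv n \!\!\!\pmod{\!3}}$ for $m+n\ge 1$. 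A local density computation at each $x$ in the support, using the coprime-squarefree generating function $\prod_P(1+u^{\deg P}+v^{\deg P})$ evaluated at $u=v=1/q$, yields the factor $q/(q+2)$ at each such point, matching the i.i.d.\ moment term by term.

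The error terms will come from the non-principal characters. For the $F_1$-sum (odd multiplicity), Weil's theorem for $L$-functions over $\mathbb F_q[X]$ provides square-root cancellation of size $q^{d_1/2}$ whenever the induced character is nontrivial; after summing over the at most $q^{j+k}$ configurations of $(\vec x,\vec y)$ and bounding the conductor size, this yields $O(q^{-d_1/2+j+k})$. For the $F_2$-sum (even multiplicity), the main term itself must be extracted via the leading pole of the associated generating function, and a direct Weil bound does not suffice for the squarefree-polynomial counting; I would instead invoke the effective Tauberian theorem for function-field Dirichlet series announced in the introduction, which gives the error $O(q^{-(1-\varepsilon)d_2+\varepsilon(j+k)})$, with the unavoidable $\varepsilon$-loss arising from the rate of convergence of the auxiliary series.

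The main obstacle will be the effective form of the Tauberian estimate for the $F_2$-sum, which must be uniform in the conductor (essentially of size $q^{j+k}$). Once this is in place, the residual tasks — controlling the contribution at infinity, the Möbius correction for coprimality, the weighting by $|\mathrm{Aut}(C)|$, and reconciling the normalization $(q+1)^{(j+k)/2}$ with the i.i.d.\ side — are routine and are absorbed into the stated error terms.
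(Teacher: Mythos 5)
Your overall skeleton is the same as the paper's: expand the $j$-th and $k$-th powers into sums over configurations of points with multiplicities, observe that a configuration contributes a main term exactly when every exponent is $\equiv 0 \pmod 3$ (matching $\mathbb E(X^m\overline{X}^n)=q/(q+2)$ for $m\equiv n \pmod 3$), extract that main term with a local density $q/(q+2)$ per point, handle the $F_2$-average by the function-field Tauberian theorem with an error uniform in the conductor (the paper does verify this uniformity: the hidden constant is bounded by $q^{\varepsilon\ell}$), and match term by term with the i.i.d.\ moment. The one genuinely different ingredient is your use of Weil/RH for Dirichlet $L$-functions over $\F_q[X]$ to kill the non-principal configurations of the $F_1$-sum. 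The paper never invokes Weil: it reduces everything to counting polynomials with prescribed values (Lemma \ref{KRlemma4}, Lemma \ref{lemma-T}, Proposition \ref{nonzerovalues}), and the cancellation for non-principal configurations comes simply from the fact that these counts are asymptotically independent of the prescribed nonzero values, so summing $\chi_3(a)$ over the three cubic residue classes gives $1+\omega+\omega^2=0$ in the main term. Your route is legitimate and gives comparable errors of the shape $q^{-d_1/2+O(j+k)}$; the paper's is more elementary.

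There is, however, one concrete flaw as written: the treatment of the point at infinity. Writing $-\Tr(\Frob_C|_{H^1_{\chi_3}})=\sum_{x\in\F_q}\chi_3(F(x))+O(1)$ and claiming at the end that this contribution is ``absorbed into the stated error terms'' cannot yield the theorem. Discarding the value at infinity produces an additive error of order $(j+k)q^{-1/2}$ in $M_{j,k}$, and it also replaces the comparison moment of $q+1$ i.i.d.\ variables by that of $q$ variables, a relative discrepancy of order $(j+k)/q$; neither of these tends to $0$ as $d_1,d_2\to\infty$ with $q$ fixed, so they cannot be hidden inside $O\bigl(q^{-(1-\varepsilon)d_2+\varepsilon(j+k)}+q^{-d_1/2+j+k}\bigr)$, which does. (It would suffice for the $q\to\infty$ Gaussian corollary, but not for the statement you are proving.) The correct treatment, as in the paper, is to work with $\widehat{S}_3(F)=\sum_{x\in\PR^1(\F_q)}\chi_3(F(x))$ over the non-monic family $\widehat{\mathcal F}_{[d_1,d_2]}$, include $x_{q+1}=\infty$ as an ordinary point in the configuration expansion, and split into the two cases according to whether the configuration contains $\infty$ (where only $\mathcal F_{(d_1,d_2)}$ contributes, with $\chi_3(F(\infty))$ determined by the leading coefficient); one then finds the infinity point contributes exactly the same factor $q/(q+2)$ as an affine point, so it belongs to the main term, not the error. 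This is also the step where averaging $F=\alpha G$ over $\alpha\in\F_q^*$ forces $M_{j,k}=0$ unless $j\equiv k\pmod 3$, a feature your sketch does not record. With the infinity point handled this way, the rest of your plan goes through.
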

\begin{corollary} \label{momentscor} When $q, d_1, d_2$  tend to infinity, the limiting distribution of the normalized trace
\\$\Tr (\Frob_C|_{H^1_{\chi_3}})/\sqrt{q+1}$ is a complex Gaussian with mean zero and variance one.
\end{corollary}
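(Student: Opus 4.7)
The plan is to apply the method of moments. A standard complex Gaussian random variable $Z$ (mean zero, $\mathbb E(Z^{2})=0$, $\mathbb E|Z|^{2}=1$) has mixed moments
\[
\mathbb E(Z^{\,j}\overline{Z}^{\,k}) \;=\; k!\,\delta_{j,k},
\]
which grow only factorially, so Carleman's criterion is satisfied and $Z$ is determined by these moments. It therefore suffices to show, for each fixed pair of nonnegative integers $(j,k)$, that $M_{j,k}(q,(d_1,d_2))\to k!\,\delta_{j,k}$ as $q,d_1,d_2\to\infty$.

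The first step is an immediate application of Theorem \ref{momentsthm}. Fix any $\varepsilon\in(0,1)$; once $d_1>2(j+k)$ and $d_2>\varepsilon(j+k)/(1-\varepsilon)$, both error exponents there are negative and the error term tends to zero as $q,d_1,d_2\to\infty$, with no compatibility constraint on the rates. The corollary is thus reduced to the complex-valued central limit statement
\[
\lim_{q\to\infty}\mathbb E\!\left(S_q^{\,j}\,\overline{S_q}^{\,k}\right)\;=\;k!\,\delta_{j,k},\qquad S_q\;:=\;\frac{1}{\sqrt{q+1}}\sum_{i=1}^{q+1}X_i,
\]
for the i.i.d.\ variables $X_i$ of Theorem \ref{componentd1d2}.

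I would verify this limit by a direct set-partition expansion, after recording the single-variable data $\mathbb E(X_i)=0$, $\mathbb E(X_i^{2})=\tfrac{q}{3(q+2)}(1+\omega^{2}+\omega)=0$, $\mathbb E|X_i|^{2}=q/(q+2)\to 1$, and $|X_i|\leq 1$. Expanding $\mathbb E(S_q^{\,j}\overline{S_q}^{\,k})$ as a sum over set partitions $\pi$ of the $j+k$ factor positions, one sees that: any $\pi$ containing a singleton vanishes since $\mathbb E(X_i)=0$; any $\pi$ with fewer than $(j+k)/2$ blocks contributes at most $O(q^{-1/2})$ because the normalisation $(q+1)^{-(j+k)/2}$ outweighs the $(q+1)^{|\pi|}$ choices of distinct indices; and among the perfect matchings only those pairing an $X$-factor with an $\overline{X}$-factor survive, since $\mathbb E(X_i^{2})=\mathbb E(\overline{X_i}^{\,2})=0$. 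Counting these surviving matchings gives exactly $k!\,\delta_{j,k}$ in the limit.

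The main (mild) obstacle is the quantitative bookkeeping in the previous paragraph---verifying that the non-matching partitions really are negligible for our fixed $(j,k)$. This is routine because the number of set partitions of $j+k$ elements depends only on $j+k$, the mixed moments $\mathbb E(X_i^{a}\overline{X_i}^{\,b})$ are bounded by $1$, and each block short of $(j+k)/2$ costs a factor of $(q+1)^{-1/2}$ or better. Combining this estimate with the reduction via Theorem \ref{momentsthm} completes the argument; since $-Z$ is equal in distribution to $Z$, convergence of moments of $-\Tr(\Frob_C|_{H^{1}_{\chi_3}})/\sqrt{q+1}$ is equivalent to convergence for $\Tr(\Frob_C|_{H^{1}_{\chi_3}})/\sqrt{q+1}$, as stated.
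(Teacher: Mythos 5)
Your proposal is correct, and it diverges from the paper at the probabilistic step. Both arguments begin identically: Theorem \ref{momentsthm} (whose error terms are uniform in $q$, as the paper stresses) reduces the problem to showing that the mixed moments of $S_q=(X_1+\cdots+X_{q+1})/\sqrt{q+1}$ tend to those of the standard complex Gaussian, and both dispose of the minus sign by the symmetry $-Z\overset{d}{=}Z$. The paper then handles the sums by quoting the triangular (Lindeberg) Central Limit Theorem for $\mathbb R^2$-valued arrays, writing $X_j=A_j+\sqrt{-1}B_j$, checking that the covariance matrix is $\mathrm{diag}\left(\tfrac12\mathbb E|X_j|^2,\tfrac12\mathbb E|X_j|^2\right)$, and concluding convergence in distribution to the density $\frac1\pi e^{-|z|^2}$; you instead compute $\mathbb E\bigl(S_q^{\,j}\overline{S_q}^{\,k}\bigr)$ directly by the set-partition expansion, using $\mathbb E(X_i)=\mathbb E(X_i^2)=0$, $\mathbb E|X_i|^2=q/(q+2)$ and boundedness to kill singletons and sub-maximal partitions and to isolate the $k!\,\delta_{j,k}$ matchings, then invoke Carleman and the method of moments. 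Your route is more self-contained and, in fact, supplies explicitly a step the paper leaves implicit: to pass from Theorem \ref{momentsthm} to ``$M_{j,k}$ approaches the Gaussian moments'' one needs convergence of the \emph{moments} of $S_q$, not merely convergence in distribution (the paper gets this implicitly from boundedness of the $X_i$, and its proof of Theorem \ref{momentsthm} already contains the closed form \eqref{RVmoments} from which your computation could also be read off). What the paper's approach buys is brevity, by outsourcing the limit theorem to a standard CLT; what yours buys is an elementary, purely combinatorial verification together with an explicit justification of moment determinacy via Carleman, which the paper leaves tacit. One cosmetic remark: Theorem \ref{momentsthm} is stated for positive $j,k$, so for the boundary moments $M_{j,0}$ and $M_{0,k}$ you should note that the same proof (or the vanishing when $j\not\equiv k \pmod 3$) covers them --- a gloss the paper makes as well.
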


We remark that when $g$ is fixed and $q$ tends to infinity, $\Tr (\Frob_C|_{H^1_{\chi_3}})$ should be distributed as the trace of matrices in a group of random matrices determined by  the monodromy group  of the moduli space of $C$ in the philosophy of Katz and Sarnak \cite{KS}. The monodromy groups for cyclic trigonal curves are computed in \cite[Theorem 3.8]{AP}. Roughly speaking, the  monodromy of each component $\mathcal H^{(d_1,d_2)}$ of signature $(r,s)$ of the moduli space $\mathcal H_{g, 3}$ is an extension of  the group of sixth roots of unity $\pmb{\upmu}_6$ by the special unitary group $\textrm{SU}(r,s)$. The monodromy for the component $\mathcal H^{(d,0)}$ is computed in \cite[Theorem 5.4]{Ka}, and the result is an extension of  $\pmb{\upmu}_6$ by $\SL(g).$

 The structure of this paper is as follows. In Section~\ref{modulispaces}, we describe moduli spaces of
cyclic trigonal curves, and present some notations and results which will be used in the
rest of the paper. In Section~\ref{geometry}, we describe how
$\Tr (\Frob_C|_{H^1_{\chi_3}})$ can be written as a sum of $q+1$ values of
the cubic character of $\chi_3$ of $\F_q$, and how to compute statistics of the
trace by counting classes of the moduli spaces. The proof of Theorem~\ref{componentd1d2} is concluded in Section~\ref{generalcount}.  We compute the moments of $\Tr (\Frob_C|_{H^1_{\chi_3}})/\sqrt{q+1}$ and prove Corollary~\ref{momentscor}
in Section~\ref{sectionmoments}, and we revisit the
case of hyperelliptic curves in Section~\ref{KRrevisited}. In Section~\ref{gen} we explain how the techniques employed in the study of cyclic trigonal case can be adapted to the general case of cyclic $p$-fold covers of $\mathbb P^1(\mathbb F_q)$.  Finally, we
present in Section~\ref{heuristic} a heuristic model which predicts
the results obtained in Theorems~\ref{KRrevisitedthm},
\ref{componentd1d2} and \ref{comp-gen}.

\section{Setting and notation}
\label{modulispaces}

Fix  $q\equiv 1 \pmod  3$. We will denote by $\zeta_q$  the
(incomplete) zeta function of the rational function field $\mathbb
F_q[X]$ given by
\begin{eqnarray*}
\zeta_q(s) = \sum_{F} |F|^{-s} = \prod_{P} \left( 1 - |P|^{-s}
\right)^{-1} = (1 - q^{1-s})^{-1}.
\end{eqnarray*}

Let $C$ be a cyclic trigonal curve over $\F_q$, i.e. a cyclic cover
of order 3 of $\PR^1$ defined over $\F_q$. Then, $C$ has an affine
model $Y^3=F(X)$, where $F(X)$ is a polynomial in $\F_q[X]$. If
$G(X) = H(X)^3 F(X)$, then $Y^3=F(X)$ and $Y^3=G(X)$ are isomorphic over
$\F_q$, so it suffices to consider curves $Y^3=F(X)$ with $F(X)$
cube-free.

Let $F \in \F_q[X]$ be cube-free and monic. Recall that cube-free over $\mathbb F_q$ is the same as cube-free over $\overline{\F}_q$. So $F$ factors in
$\overline {\mathbb F}_q[X]$ as
\[F(X) = \prod_{i=1}^{d_1} (X-a_i)\prod_{j=1}^{d_2}(X-b_j)^2,\]
where $a_i, b_j$ are distinct elements of $\overline{\F}_q$.

Let $C_F$ be the cyclic trigonal curve given by $Y^3 = F(X) = F_1(X)
F_2(X)^2,$ with $F_1$ and $F_2$ relatively prime, square-free, $\deg{F_1}=d_1$, $\deg{F_2}=d_2$ and $d = \deg{F} =
d_1 + 2d_2$.  The curve $C_F$ has genus $g$ if and
only if $d_1 +2d_2 \equiv 0 \pmod 3$ and $g=d_1+d_2-2$, or $d_1
+2d_2 \equiv 1 \text{ or } 2 \pmod 3$ and $g=d_1+d_2-1$. Over $\overline {\mathbb F}_q$, one can reparametrize
and choose an affine
model for any cyclic trigonal curve with $d_1+2d_2 \equiv 0 \,(\mathrm{mod}\, 3)$. We already see that the
relationship between the genus of the curve $C_F$ and the degree of
the polynomial $F$ defining it is not as simple as in the
hyperelliptic case, as the genus is not a function of the degree.
Note that by interchanging $F_1$ and $F_2$, we are replacing $F$ by
$F^2$ (modulo a perfect cube) and the two curves $Y^3 = F_1(X)
F_2(X)^2$ and $Y^3 = F_1(X)^2 F_2(X)$ are isomorphic.  Furthermore,
the moduli space $\mathcal H_{g,3}$ of cyclic trigonal curves of
fixed genus $g$ splits into irreducible subspaces indexed by pairs
of nonnegative integers $d_1, d_2$ with the property that $d_1 +
2d_2 \equiv 0 \,(\mathrm{mod}\, 3)$, and the moduli space can be written as a disjoint union over its connected components
\begin{eqnarray} \label{modulicomponents}
\mathcal H_{g,3}= \bigcup_{\substack{d_{1}+2d_{2} \equiv 0 \pmod
3,\\ g=d_1+d_{2}-2}} \mathcal H^{(d_{1},d_{2})}.
\end{eqnarray}
Each component $\mathcal H^{(d_{1},d_{2})}$ is irreducible, and pairs $(d_1,d_2)$ and $(d_2,d_1)$ give the same component.

The components can also be described by their signature $(r,s)$. The
signature and $(d_1, d_2)$ are related by $d_1 = 2r-s+1$ and
$d_2=2s-r+1$, or equivalently $r=(2d_1+d_2-3)/3$ and $s=
(d_1+2d_2-3)/3$. Each unordered pair $\{r,s\}$ represents a
different component of the moduli space of cyclic trigonal curves.
We refer the reader to \cite{AP} for the details.

In view of the previous observations, we will write
\[F(X)=F_1(X)F_2(X)^2,\]
where $F_1$ and $F_2$ are relatively prime monic square-free
polynomials with $\deg F_1=d_1$ and  $\deg F_2=d_2$.

We will use the following sets of polynomials:
\begin{eqnarray*}
V_d &=& \left\{ F \in \mathbb F_q[X] \;:\; F \textrm{ monic,} \deg F = d\right\}\\
\mathcal F_d & = & \{F\in \mathbb F_q[X]:  F \textrm{ monic,  square-free and}\deg F=d   \}  \\
\widehat{\mathcal F}_d & = & \{F\in \mathbb F_q[X]:  F \textrm{ square-free and}\deg F=d   \}  \\
\mathcal F_{(d_1, d_2) }&=&
\{F= F_1 F_2^2 : F_1, F_ 2 \textrm{ monic, square-free and coprime},  \deg F_1 = d_1, \deg F_2 = d_2  \}  \\
\mathcal F_{(d_1, d_2)}^k &=&\left\{ F=F_1 F_2^2 \in \mathcal F_{(d_1, d_2)}: F_2 \text{ has } k \text{ roots in } \mathbb F_q\right\} \\
\widehat{\mathcal F}_{(d_1, d_2) }&=& \left\{F= \alpha F_1 F_2^2 \;:\;
\alpha \in
\F_q^*, F_1F_ 2^2 \in \mathcal F_{(d_1,d_2)}  \right\}
\\
{\mathcal{F}}_{[d_1, d_2]} &=& {\mathcal{F}}_{(d_1, d_2)} \cup
{\mathcal{F}}_{(d_1-1, d_2)} \cup {\mathcal{F}}_{(d_1, d_2-1)}
\\
\widehat{\mathcal{F}}_{[d_1, d_2]} &=& \widehat{\mathcal{F}}_{(d_1,
d_2)} \cup \widehat{\mathcal{F}}_{(d_1-1, d_2)} \cup
\widehat{\mathcal{F}}_{(d_1, d_2-1)} .
\end{eqnarray*}

As a matter of convention, from now on, all our  polynomials will be
monic unless otherwise stated. Also, we will use $P$ to denote monic irreducible polynomials.

We transcribe here the relevant results from the work of Kurlberg and Rudnick \cite{KR}.

\begin{lemma}\cite[Lemma 3]{KR} \label{KRlemma3}
The number of square-free monic polynomials of degree $d$ is
\[|{\mathcal F}_d|=\left \{ \begin{array}{ll} {q^{d}}{(1-q^{-1})} & d\geq 2,\\\\q^{d} & d=0,1.\end{array}\right.\]
\end{lemma}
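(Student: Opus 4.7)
The plan is to exploit the unique factorization $F = A^2 B$ of every monic polynomial $F \in \mathbb{F}_q[X]$, where $A$ is monic and $B$ is monic and square-free. This decomposition is immediate from unique factorization: writing $F = \prod P_i^{e_i}$ with $P_i$ distinct monic irreducibles, one sets $B = \prod_{e_i \text{ odd}} P_i$ and $A = \prod P_i^{\lfloor e_i/2 \rfloor}$.

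Next, I would translate this factorization into a generating-function identity. Since the number of monic polynomials of degree $d$ equals $q^d$, setting $u = q^{-s}$ gives $\sum_F u^{\deg F} = \sum_{d \geq 0} q^d u^d = (1-qu)^{-1} = \zeta_q(s)$. Grouping monic polynomials according to their square-free part yields
\[
\zeta_q(s) \;=\; \left(\sum_{A} u^{2\deg A}\right) \cdot \sum_{d \geq 0} |\mathcal{F}_d|\,u^d \;=\; \zeta_q(2s) \cdot \sum_{d \geq 0} |\mathcal{F}_d|\,u^d,
\]
and so
\[
\sum_{d \geq 0} |\mathcal{F}_d|\,u^d \;=\; \frac{\zeta_q(s)}{\zeta_q(2s)} \;=\; \frac{1 - qu^2}{1 - qu}.
\]

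Finally, I would read off the coefficients. Expanding $(1-qu^2)\sum_{k \geq 0} (qu)^k$, the coefficient of $u^0$ is $1$, the coefficient of $u^1$ is $q$, and for $d \geq 2$ the coefficient is $q^d - q \cdot q^{d-2} = q^d - q^{d-1} = q^d(1 - q^{-1})$, matching exactly the three cases of the lemma. There is no real obstacle here: the entire argument reduces to the clean factorization $F = A^2 B$ and a two-term polynomial expansion; all that one has to verify carefully is uniqueness of this factorization, which is a routine consequence of the UFD property of $\mathbb{F}_q[X]$.
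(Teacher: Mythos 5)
Your proof is correct, and it is essentially the standard argument: the paper itself gives no proof for this lemma (it is quoted from Kurlberg--Rudnick \cite{KR}), and the argument there is precisely this generating-function computation, namely that the Dirichlet series of square-free monics equals $\zeta_q(s)/\zeta_q(2s)=(1-qu^{2})/(1-qu)$ via the unique factorization $F=A^{2}B$, followed by coefficient extraction. Your coefficient computation and the uniqueness of the decomposition are both verified correctly, so there is nothing to add.
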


\begin{lemma}\cite[Lemma 4]{KR} \label{KRlemma4}For $0 \leq \ell\leq q$, let $x_1, \ldots, x_\ell
\in \mathbb F_q$  be distinct elements, and let $a_1, \ldots, a_\ell
\in \mathbb F_q$. If $d \geq \ell$, then
\[\left|\left\{F \in V_d: F(x_1) = a_1, \ldots, F(x_\ell)=a_\ell \right\}\right|= q^{d-\ell}. \]
\end{lemma}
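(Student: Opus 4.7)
The plan is to prove this via a Lagrange interpolation / Euclidean division argument, which reduces the counting problem to a bijection between $V_d$ and a product of a monic ``free'' part and a uniquely determined interpolation part.

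First, I would set $M(X) = \prod_{i=1}^{\ell}(X - x_i)$, a monic polynomial of degree $\ell \leq d$. Euclidean division gives that every $F \in V_d$ can be written uniquely as
\[
F(X) = M(X)\,Q(X) + R(X), \qquad \deg R < \ell.
\]
Comparing leading coefficients, since $F$ and $M$ are both monic and $\deg F = d \geq \ell = \deg M$, the quotient $Q$ is forced to be monic of degree $d - \ell$. Conversely, any such pair $(Q,R)$ (with $Q$ monic of degree $d-\ell$ and $R$ of degree $< \ell$) produces an element of $V_d$. So $V_d$ is in bijection with the product of monic polynomials of degree $d - \ell$ and polynomials of degree $< \ell$.

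Next, since $M(x_i) = 0$ for all $i$, the condition $F(x_i) = a_i$ is equivalent to $R(x_i) = a_i$ for each $i = 1, \ldots, \ell$. Because the $x_i$ are distinct (and $\ell \leq q$ guarantees this is possible in $\mathbb F_q$), Lagrange interpolation produces a unique polynomial $R$ of degree $< \ell$ satisfying these $\ell$ prescribed values; equivalently, evaluation at the $x_i$ is a linear isomorphism from the $\ell$-dimensional space of polynomials of degree $< \ell$ onto $\mathbb F_q^{\ell}$, the non-degeneracy being the Vandermonde determinant $\prod_{i<j}(x_j - x_i) \neq 0$.

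Finally, combining these observations, every $F \in V_d$ satisfying $F(x_i) = a_i$ for $i=1,\ldots,\ell$ is of the form $F = M\cdot Q + R$, where $R$ is the unique Lagrange interpolant just described and $Q$ ranges freely over monic polynomials of degree $d - \ell$. The number of such $Q$ is $q^{d-\ell}$ (one $\mathbb F_q$-choice for each of the $d-\ell$ non-leading coefficients), which gives the claimed count. There is no real obstacle here; the only thing to check carefully is the monicity bookkeeping in the Euclidean division step, and the boundary cases $\ell = 0$ (trivial, giving $|V_d| = q^d$) and $\ell = d$ (giving $q^0 = 1$, namely $F = M + R$ with $R$ the Lagrange interpolant).
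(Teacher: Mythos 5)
Your proof is correct and complete: the decomposition $F = MQ + R$ with $M(X)=\prod_{i=1}^{\ell}(X-x_i)$, the observation that $Q$ must be monic of degree $d-\ell$, and the unique Lagrange interpolant $R$ together give exactly $q^{d-\ell}$ polynomials, with the boundary cases $\ell=0$ and $\ell=d$ handled properly. Note that the paper itself does not prove this statement but cites it as Lemma 4 of \cite{KR}; your division-with-remainder/interpolation argument is the standard proof of that cited result (equivalently, surjectivity of reduction modulo $M$ with fibres of size $q^{d-\ell}$), so there is no substantive difference in approach to report.
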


\begin{lemma}\cite[Lemma 5]{KR}  \label{KRlemma5}  Let $d\geq 2$ and $\ell
\leq q$ be positive integers, let  $x_1, \ldots, x_\ell \in \mathbb
F_q$  be distinct elements, and let $a_1, \ldots, a_\ell \in \mathbb
F_q$ be nonzero elements. Then
\[\left|\left\{F \in \mathcal F_d: F(x_1) = a_1, \ldots, F(x_\ell)=a_\ell \right\}\right|=  \frac{q^{d-\ell}}{\zeta_q(2) (1 - q^{-2})^\ell}
+ O\left( q^{d/2}\right).\] \end{lemma}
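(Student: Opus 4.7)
The plan is to detect the square-free condition by M\"obius inversion over $\mathbb{F}_q[X]$. Using $\mathbf{1}[F\text{ square-free}]=\sum_{B^2\mid F}\mu(B)$ and swapping sums, the count equals $\sum_{B}\mu(B)N(B)$, where $N(B)=|\{F\in V_d:B^2\mid F,\ F(x_i)=a_i\ \forall i\}|$. Writing $F=B^2G$ with $G$ monic of degree $d-2b$ (so $b:=\deg B\leq d/2$), and observing that each $a_i\neq 0$ forces $B(x_i)\neq 0$, the interpolation conditions transform into $G(x_i)=a_i/B(x_i)^2$ at the same $\ell$ distinct points. Thus $N(B)=0$ if $B$ vanishes at some $x_j$, and otherwise $N(B)$ counts monic polynomials $G$ of degree $d-2b$ with these prescribed values.

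For $b$ in the \emph{main range} $0\leq b\leq (d-\ell)/2$, Lemma~\ref{KRlemma4} applies to give exactly $N(B)=q^{d-2b-\ell}$. Formally extending the $b$-sum to all $b\geq 0$, the main contribution becomes
\[
q^{d-\ell}\sum_{b\geq 0}q^{-2b}\sum_{\substack{B\ \text{monic},\ \deg B=b\\ B(x_j)\neq 0\ \forall j}}\mu(B).
\]
By multiplicativity of $\mu$, restricting away from the $\ell$ degree-one primes $X-x_j$ gives
\[
\sum_{\substack{B\ \text{monic}\\ B(x_j)\neq 0\ \forall j}}\mu(B)\,u^{\deg B}=\prod_{P\neq X-x_j}\bigl(1-u^{\deg P}\bigr)=\frac{1-qu}{(1-u)^\ell}.
\]
Evaluating at $u=q^{-2}$ and using $\zeta_q(2)^{-1}=1-q^{-1}$ yields precisely the advertised main term $q^{d-\ell}/\bigl(\zeta_q(2)(1-q^{-2})^\ell\bigr)$.

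Finally I would bound the two error contributions. First, the tail introduced by extending the main-range sum is at most $\sum_{b>(d-\ell)/2}q^{d-2b-\ell}\cdot q^b\ll q^{(d-\ell)/2}$. Second, in the \emph{short range} $(d-\ell)/2<b\leq d/2$, the number of monic $G$ of degree $d-2b<\ell$ satisfying $\ell$ interpolation conditions is at most $1$: the Lagrange interpolant of the $\ell$ prescribed values is the unique polynomial of degree $<\ell$ through them, and at most one monic polynomial of a given degree $d-2b$ can arise from it. Hence this range contributes at most $\sum_{b\leq d/2}q^b\ll q^{d/2}$. Both errors are absorbed into $O(q^{d/2})$.

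The one step that is not entirely routine is the treatment of the short range: one must verify that $N(B)=O(1)$ uniformly there, since otherwise the naive estimate $|\{B:\deg B=b\}|\leq q^b$ would be insufficient. Once this is in place the generating-function manipulation yields the stated constant on the nose, and no further care is needed.
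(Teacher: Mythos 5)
Your proof is correct, and it is essentially the argument the paper itself uses: the same M\"obius detection of the square-free condition, the same split into the range where Lemma~\ref{KRlemma4} applies exactly versus the short range handled by the "at most one interpolant" bound, and the same Euler-product evaluation (your identity $(1-qu)/(1-u)^\ell$ at $u=q^{-2}$ is just the Dirichlet-series computation in the paper's proof of Lemma~\ref{lemma-T}, which generalizes this lemma of \cite{KR}). No gaps; the short-range uniform bound $N(B)\le 1$ that you flag is exactly the point the paper also invokes.
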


\begin{lemma}\cite[Proposition 6]{KR}  \label{KRproposition6}
Let $ x_1, \ldots, x_{\ell+m} \in \mathbb F_q$ be distinct elements,
let $a_1, \ldots, a_\ell \in \mathbb F^*_q$, and let $a_{\ell+1}= \cdots=
a_{\ell+m}=0$. Then
\[ \left|\left\{F \in \mathcal F_d: F(x_i)= a_i, 1 \leq i \leq m+\ell\right\}\right|
=  \frac{(1-q^{-1})^m q^{d-(m+\ell)}}{\zeta_q(2) (1-q^{-2})^{m+\ell}}
\left( 1+O\left( q^{(3m+2\ell - d)/2}\right) \right). \]
and
\[ \frac{\left|\left\{F \in \mathcal F_d: F(x_i)= a_i, 1 \leq i \leq m+\ell\right\}\right|}{\left| \mathcal{F}_d\right|}
=  \frac{(1-q^{-1})^m q^{-(m+\ell)}}{ (1-q^{-2})^{m+\ell}} \left( 1+O\left( q^{(3m+2\ell - d)/2}\right) \right). \]
\end{lemma}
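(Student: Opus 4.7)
The natural approach is M\"obius inversion via the identity $\sum_{B^2 \mid F}\mu(B)$, which equals $1$ if $F$ is square-free and $0$ otherwise. Writing $F = B^2 G$ with $G$ monic of degree $d - 2\deg B$, the condition $F(x_i) = a_i$ becomes $B(x_i)^2 G(x_i) = a_i$. For this to be solvable we need $B(x_i) \neq 0$ for every $i \leq \ell$ (since $a_i \neq 0$), in which case $G(x_i) = a_i/B(x_i)^2$ is determined; for $i > \ell$, either $B(x_i) = 0$ (no constraint on $G(x_i)$) or $B(x_i) \neq 0$ (forcing $G(x_i) = 0$). Organizing by the subset $T = \{j > \ell : B(x_j) = 0\} \subseteq \{\ell+1, \ldots, \ell+m\}$ and factoring $B = \prod_{j \in T}(X - x_j) \cdot B'$ with $B'$ square-free monic coprime to $Q(X) := \prod_{i=1}^{\ell+m}(X - x_i)$, Lemma \ref{KRlemma4} counts the admissible $G$'s and yields
\[
\bigl|\{F \in \mathcal F_d : F(x_i) = a_i\}\bigr| = \sum_{T \subseteq \{\ell+1, \ldots, \ell+m\}} (-1)^{|T|} q^{d - \ell - m - |T|} \!\!\!\sum_{\substack{B' \text{ sqfree}, \\ \gcd(B', Q) = 1, \\ 2\deg B' \leq d - \ell - m - |T|}}\!\!\! \mu(B')\, q^{-2\deg B'}.
\]

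For the main term, extend each inner sum over all $B'$ coprime to $Q$ with no degree restriction. The Euler product computation
\[
\sum_{\substack{B' \text{ sqfree} \\ \gcd(B',Q) = 1}}\!\!\! \mu(B')\, q^{-2\deg B'} = \prod_{P \nmid Q}\bigl(1 - q^{-2\deg P}\bigr) = \frac{1 - q^{-1}}{(1 - q^{-2})^{\ell+m}} = \frac{1}{\zeta_q(2)(1-q^{-2})^{\ell+m}},
\]
combined with the binomial identity $\sum_T (-1)^{|T|} q^{-|T|} = (1 - q^{-1})^m$, produces exactly the claimed main term $(1 - q^{-1})^m q^{d - (m+\ell)}/\bigl[\zeta_q(2)(1 - q^{-2})^{m+\ell}\bigr]$.

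The error arises from truncating each inner sum. Using $|\mu(B')| \leq 1$ and the bound of $q^b$ on the number of monic polynomials of degree $b$, the tail beyond $\deg B' = (d - \ell - m - |T|)/2$ is $O(q^{-(d - \ell - m - |T|)/2})$; multiplying by the prefactor $q^{d - \ell - m - |T|}$ and summing over the $2^m$ subsets $T$ yields an absolute error whose ratio to the main term is $O(q^{(3m + 2\ell - d)/2})$. The second, normalized formula then follows immediately from $|\mathcal F_d| = q^d/\zeta_q(2)$ (Lemma \ref{KRlemma3}). The main obstacle is careful bookkeeping in the error analysis --- the truncation threshold depends on $|T|$, and one must also verify that the range of $\deg B'$ where Lemma \ref{KRlemma4} fails to apply contributes no larger error (using the elementary fact that a monic polynomial of degree $D$ is determined by at most $D$ point values) --- but once the M\"obius/Euler-product machinery is set up, the argument extends the proof of Lemma \ref{KRlemma5} in the natural way.
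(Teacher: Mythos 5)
Your proof is correct. Note that the paper itself does not prove this lemma: it is transcribed from Kurlberg--Rudnick \cite{KR}, where Proposition 6 is obtained by a slightly different reduction, the same one this paper later uses in Corollary \ref{whatever}: a square-free $F$ vanishing at $x_{\ell+1},\dots,x_{\ell+m}$ is written as $F=\prod_{j>\ell}(X-x_j)\,H$ with $H$ square-free of degree $d-m$, taking prescribed nonzero values at $x_1,\dots,x_\ell$ and arbitrary nonzero values at $x_{\ell+1},\dots,x_{\ell+m}$; summing the count of Lemma \ref{KRlemma5} over the $(q-1)^m$ choices of those nonzero values gives the main term and an absolute error $O\left((q-1)^m q^{(d-m)/2}\right)$, i.e.\ the stated relative error $O\left(q^{(3m+2\ell-d)/2}\right)$. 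You instead run a single M\"obius sieve that handles zero and nonzero prescribed values at once, stratifying the square-free modulus $B$ by the set $T$ of zero-points dividing it; this is the natural extension of the proof of Lemma \ref{KRlemma5} (and of Lemma \ref{lemma-T} in the paper), and your bookkeeping checks out: the Euler product over $P\nmid Q$ gives $1/\bigl(\zeta_q(2)(1-q^{-2})^{\ell+m}\bigr)$, the alternating sum over $T$ gives $(1-q^{-1})^m$, the truncation tails contribute relative error $O\left(2^m q^{(\ell+m-d)/2}\right)\le O\left(q^{(3m+2\ell-d)/2}\right)$, and the range where Lemma \ref{KRlemma4} does not apply contributes $O\left(q^{d/2}\right)$ absolutely (at most one monic $G$ per modulus, and $\sum_T q^{-|T|}\le(1+q^{-1})^q$ is bounded), hence relative error $O\left(q^{(2\ell+2m-d)/2}\right)$, also admissible; since $m\le q$, all implied constants are absolute, which matters for the paper's later uniformity in $q$. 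The cited route is shorter because it recycles Lemma \ref{KRlemma5} wholesale; yours is self-contained modulo Lemma \ref{KRlemma4}, and both yield the same main term and error exponent.
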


We will also use the following Lemma which follows easily from Lemma
\ref{KRproposition6}.

\begin{lemma} \label{fromKR} Fix $0 \leq k \leq q$. Then
\[\left|\mathcal{F}_{d}^k\right|=\frac{\binom{q}{k}q^{d-k}}{\zeta_q(2)
(1+q^{-1})^q} \left(1+O\left(q^{(k+2q-d)/2}\right) \right).\]
\end{lemma}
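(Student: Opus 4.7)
The plan is to decompose the count $\left|\mathcal{F}_d^k\right|$ by first fixing which $k$ elements of $\F_q$ are the $\F_q$-roots of $F$, then summing Lemma \ref{KRproposition6} over the possible nonzero values at the remaining $q-k$ elements, and finally simplifying.

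First I would rewrite $\mathcal{F}_d^k$ as the disjoint union, over all $k$-element subsets $S = \{x_1,\dots,x_k\} \subset \F_q$, of the set of $F \in \mathcal{F}_d$ whose zero-set in $\F_q$ is exactly $S$. There are $\binom{q}{k}$ such subsets, and by symmetry each contributes the same count. After fixing $S$ and an ordering $y_1,\dots,y_{q-k}$ of $\F_q \setminus S$, I would count $F \in \mathcal{F}_d$ subject to $F(x_i)=0$ for $1 \le i \le k$ and $F(y_j) = a_j$ for a fixed choice of $a_j \in \F_q^*$. This is precisely the setting of Lemma \ref{KRproposition6} with $m=k$ zero conditions and $\ell = q-k$ nonzero conditions, yielding
\[
\frac{(1-q^{-1})^k\, q^{d-q}}{\zeta_q(2)(1-q^{-2})^q}\Bigl(1 + O\bigl(q^{(3k+2(q-k)-d)/2}\bigr)\Bigr) = \frac{(1-q^{-1})^k\, q^{d-q}}{\zeta_q(2)(1-q^{-2})^q}\Bigl(1 + O\bigl(q^{(k+2q-d)/2}\bigr)\Bigr).
\]

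Next I would sum over the $(q-1)^{q-k}$ choices of the nonzero tuple $(a_1,\dots,a_{q-k}) \in (\F_q^*)^{q-k}$, which gives the count for a single $S$, and then multiply by $\binom{q}{k}$ for the choice of $S$. This produces
\[
\left|\mathcal{F}_d^k\right| = \binom{q}{k}(q-1)^{q-k}\cdot \frac{(1-q^{-1})^k\, q^{d-q}}{\zeta_q(2)(1-q^{-2})^q}\Bigl(1 + O\bigl(q^{(k+2q-d)/2}\bigr)\Bigr).
\]
Finally I would simplify: writing $(q-1)^{q-k} = q^{q-k}(1-q^{-1})^{q-k}$ combines with the $(1-q^{-1})^k$ factor to $(1-q^{-1})^q q^{q-k}$, giving a numerator $(1-q^{-1})^q q^{d-k}$; factoring $(1-q^{-2})^q = (1-q^{-1})^q(1+q^{-1})^q$ lets the $(1-q^{-1})^q$ cancel, leaving exactly the claimed expression.

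There is no real obstacle here, only bookkeeping: the only thing that needs care is tracking the exponent in the error term through Lemma \ref{KRproposition6} (to verify $(3m+2\ell-d)/2$ collapses to $(k+2q-d)/2$) and checking that the error term, which is already uniform in the choices of $a_j$, survives the summation over the $(q-1)^{q-k}$ such choices since it is a relative error multiplying the main term.
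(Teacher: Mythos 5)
Your proposal is correct and is essentially the paper's own argument: the paper likewise applies Lemma \ref{KRproposition6} with $m=k$ and $\ell=q-k$, multiplies by the $\binom{q}{k}$ choices of zero set and the $(q-1)^{q-k}$ choices of nonzero values, and simplifies to the stated formula with error exponent $(3k+2(q-k)-d)/2=(k+2q-d)/2$. Your extra bookkeeping (the algebraic simplification and the uniformity of the relative error over the value choices) is just a more explicit version of the same steps.
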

\begin{proof}
In the Lemma \ref{KRproposition6}, set $m=k$ and $\ell=q-k$. In this way, we guarantee that there are exactly $k$ zeros. Now we also have $\binom{q}{k}$ options for choosing the zeros, and $(q-1)^\ell$ options for choosing the nonzero values. Combining all of this with Lemma \ref{KRproposition6}, we get the formula.
\end{proof}

\section{The geometric point of view}
\label{geometry}

We prove in this section that Theorem \ref{componentd1d2} follows
from the following theorem which will be proved in Section
\ref{generalcount}. Recall that  $\chi_3$ is a fixed cubic character
of $\mathbb F_q$ and $\omega$ is a primitive third root of unity in $\C$.

\begin{theorem} \label{countingF} Let $x_1, \dots, x_q$ be the elements of $\F_q$ and
let $\varepsilon_1, \dots, \varepsilon_q \in \left\{ 0,1,\omega,
\omega^2 \right\}$. Let $m$ be the number of values of $\varepsilon_i$
which are 0. Then for any $\varepsilon > 0$
\begin{eqnarray*}
\left| \mathcal{F}_{(d_1, d_2)} \right| &=& \frac{K q^{d_1+d_2}}{\zeta_q(2)^2}
\left(1+ O\left(q^{-(1-\varepsilon )d_2} + q^{-d_1/2}\right) \right),
\end{eqnarray*}
\begin{eqnarray}\nonumber
\left|\left\{ F \in \mathcal{F}_{(d_1,d_2)} \;:\; \chi_3(F(x_i)) =
\varepsilon_i, \; 1 \leq i \leq q \right\}\right| &=&\frac{K q^{d_1+d_2}}{\zeta_q(2)^2}
\left( \frac{2}{q+2} \right)^m \left( \frac{q}{3(q+2)}
\right)^{q-m}
\\ \label{useful}
&\times&\left(1+ O\left( q^{ -(1-\varepsilon) (d_2-m)+\varepsilon q}+q^{-(d_1-m)/2+q} \right)\right),
\end{eqnarray}
and
\begin{eqnarray*}
 \frac{\left|\left\{ F \in \mathcal{F}_{(d_1,d_2)} \;:\;
\chi_3(F(x_i)) = \varepsilon_i, \; 1 \leq i \leq q
\right\}\right|}{\left|\mathcal{F}_{(d_1,d_2)} \right|}
&=&\left( \frac{2}{q+2} \right)^m \left( \frac{q}{3(q+2)}
\right)^{q-m}\\&\times&\left(1+ O\left( q^{ -(1-\varepsilon) (d_2-m)+\varepsilon q}+q^{ -(d_1-m)/2+q} \right)\right),
\end{eqnarray*}
where $K$ is the constant
\[ K =  \prod_{P} \left( 1 - \frac{1}{(|P|+ 1)^2}
\right).\]
\end{theorem}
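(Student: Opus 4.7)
The plan is to derive the theorem by combining a two-variable Euler-product / Tauberian analysis for the basic count $|\mathcal F_{(d_1,d_2)}|$ with orthogonality of cubic characters to pin down the values of $\chi_3(F(x_i))$.

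For the first formula I would use that $F_1,F_2$ being monic, square-free, and coprime means that every monic irreducible $P$ of $\mathbb F_q[X]$ either does not divide $F_1F_2$, or divides exactly one of $F_1,F_2$ with multiplicity one; this gives
\begin{equation*}
Z(u,v)\;:=\;\sum_{d_1,d_2\ge 0}|\mathcal F_{(d_1,d_2)}|u^{d_1}v^{d_2}\;=\;\prod_{P}\bigl(1+u^{\deg P}+v^{\deg P}\bigr).
\end{equation*}
Comparing with $\zeta_q(u)\zeta_q(v)=\prod_P(1-u^{\deg P})^{-1}(1-v^{\deg P})^{-1}$, one writes $Z(u,v)=\zeta_q(u)\zeta_q(v)H(u,v)$, whose local factor is $(1+u^d+v^d)(1-u^d)(1-v^d)=1+O(u^{2d}+u^dv^d+v^{2d})$, so $H$ converges absolutely in a bidisk strictly larger than $|u|,|v|\le q^{-1}$. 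A direct product computation yields $H(q^{-1},q^{-1})=K/\zeta_q(2)^2$, and a standard two-variable function-field Tauberian argument then extracts $[u^{d_1}v^{d_2}]Z(u,v)$ with the claimed error.

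For the character-weighted count I would first handle the $m$ points with $\varepsilon_i=0$: coprimality of $F_1,F_2$ forces each such $(X-x_i)$ to divide exactly one of $F_1,F_2$, so I sum over the $2^m$ partitions $(S_1,S_2)$ of $\{i:\varepsilon_i=0\}$, absorb the forced factors by writing $F_k=\prod_{i\in S_k}(X-x_i)\cdot G_k$ for $k=1,2$, and reduce to counting coprime monic square-free pairs $(G_1,G_2)$ of degrees $d_1-|S_1|, d_2-|S_2|$, each nonvanishing on $\mathbb F_q$, with prescribed cubic characters $\chi_3(G_k(x_i))$ at the $q-m$ indices $i$ with $\varepsilon_i\ne 0$. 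To detect these prescribed characters I apply orthogonality
\begin{equation*}
\mathbf{1}_{\chi_3(a)=\eta}\;=\;\tfrac13\sum_{j=0}^{2}\overline{\eta}^{\,j}\chi_3(a)^{j}\qquad(a\in\mathbb F_q^{\ast}),
\end{equation*}
expanding into sums of twisted pair-counts $\sum\chi_3(G_1)^{\mathbf j}\chi_3(G_2)^{\mathbf k}$. Each such sum is the relevant coefficient of a modified Euler product; for the trivial tuple $(\mathbf j,\mathbf k)=(\mathbf 0,\mathbf 0)$, the local factor at $X-x_i$ with $\varepsilon_i\ne 0$ becomes $\tfrac13$ of the ``does not divide'' term, giving local density $q/(3(q+2))$ at $u=v=q^{-1}$, while the $2^m$-sum turns the local factor at $\varepsilon_i=0$ into $u+v$ of density $2/(q+2)$; this contributes exactly the predicted main term. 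Nontrivial tuples $(\mathbf j,\mathbf k)$ yield cubic Dirichlet characters of $\mathbb F_q[X]$ with conductor dividing $\prod_i(X-x_i)$, whose $L$-functions are polynomials of bounded degree satisfying Weil's square-root bound, contributing only to the error.

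The hardest step will be making these estimates uniform in $q$ and in the parameters $d_1,d_2,m$. Summing Weil-type bounds over the $3^{q-m}$ nontrivial twist tuples introduces a factor exponential in $q$, which accounts for the $+\varepsilon q$ and $+q$ in the two error exponents; meanwhile the Tauberian extraction in the $v$-direction supplies the savings $-(1-\varepsilon)(d_2-m)$, and the Weil bound applied in the $u$-direction (after $F_2$-aspect is effectively fixed) supplies the savings $-(d_1-m)/2$. The third (ratio) assertion then follows at once by dividing the character-weighted count by $|\mathcal F_{(d_1,d_2)}|$, since both main terms share the common prefactor $Kq^{d_1+d_2}/\zeta_q(2)^2$.
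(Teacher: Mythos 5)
Your route is genuinely different from the paper's. The paper never uses character sums or $L$-functions: it counts polynomials with \emph{prescribed values} $F(x_i)=a_i$ directly, via M\"obius inversion over square divisors and over divisors of a fixed polynomial $U$ (Lemmas \ref{lemma-V} and \ref{lemma-T}), using the exact count $q^{d-\ell}$ for prescribing $\ell$ values of a monic degree-$d$ polynomial; the $F_1$-sum is done first with error $O(q^{d_1/2})$, the resulting $F_2$-sum is a one-variable Dirichlet series $\frac{\zeta_q(s)}{\zeta_q(2s)}H(s)(1+\frac{1}{q^{s-1}(q+1)})^{-\ell}$ handled by Rosen's Tauberian theorem with the contour pushed to $\mathrm{Re}(s)=\varepsilon$, the zero values are handled by the same $2^m$-fold splitting of which of $F_1,F_2$ vanishes (Corollaries \ref{whatever}, \ref{formoments}), and only at the very end are character conditions converted to value conditions by counting the $(q-1)/3$ preimages of each nonzero character value. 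You instead detect the characters by orthogonality and control the nontrivial twists by RH for function-field Dirichlet $L$-functions of modulus dividing $\prod_i(X-x_i)$. Your plan is coherent, and the key structural point that saves it is one you should state explicitly: since the exponents on $F_2$ are $2j_i\bmod 3$, every nontrivial orthogonality tuple induces a \emph{nontrivial} character in the $F_1$-aspect, so Weil gives the $q^{-(d_1-m)/2}$ saving for all error tuples (a tuple nontrivial only in the $F_2$-aspect would give merely $q^{-d_2/2}$, which does not fit under the stated error when $d_1>d_2$). The trade-off: the paper's argument is elementary and gets its square-root saving from the trivial prescribed-value count, at the cost of the value-by-value bookkeeping; yours imports the deeper Weil input and must absorb the $3^{q-m}$ tuples and binomial-coefficient losses from $L$-polynomials of degree up to $q-1$ into the $q^{\varepsilon q}$ and $q^{q}$ factors, which is consistent with the stated exponents but requires uniform-in-$q$ bookkeeping that the paper carries out carefully (e.g.\ bounding the Tauberian constant by $q^{\varepsilon\ell}$).

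One concrete caveat on your first formula: absolute convergence of $H(u,v)$ only up to $|u|,|v|<q^{-1/2}$ cannot by itself yield the claimed error $O(q^{-(1-\varepsilon)d_2}+q^{-d_1/2})$; a symmetric two-variable extraction in that bidisk gives only $q^{-(1/2-\varepsilon)d_2}$ in the $d_2$-aspect, which is weaker than the statement you must prove. To reach $-(1-\varepsilon)d_2$ you need the sequential extraction together with the further factorization of the specialized series in $v$, namely pulling out the $1/\zeta_q(2s)$-type factor $(1-qv^2)$ so that the remaining Euler product converges for $|v|<q^{-\varepsilon}$ -- this is exactly the paper's function $G(s)=\frac{\zeta_q(s)}{\zeta_q(2s)}H(s)(\cdots)^{-\ell}$. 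The same refinement is needed for the main-term tuple in your character-weighted count. This is fixable within your framework, but as written the step ``a standard two-variable Tauberian argument gives the claimed error'' is not justified.
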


For any
polynomial $F \in \mathcal{F}_{(d_1,d_2)}$, let
\begin{eqnarray} \label{definitionSF}
S_3(F) = \sum_{x \in \F_q} \chi_3(F(x)).
\end{eqnarray}
Then, the number of affine points on the curve $Y^3=F(X)$ is given by
\[
\sum_{x \in \F_q} 1 + \chi_3(F(x)) + \overline{\chi_3(F(x))} = q +
S_3(F) + \overline{S_3(F)}.
\]

Using  Theorem \ref{countingF}, we can immediately deduce a result
for the distribution of the affine trace $- (S_3(F) +
\overline{S_3(F)})$ when we vary over the family of curves $C_F: Y^3
= F(X)$ for $F(X) \in \mathcal{F}_{(d_1,d_2)}$. This is the
``non-geometric version" of Theorem \ref{componentd1d2} which
corresponds to Theorem 1 of \cite{KR}. When comparing Theorem
\ref{componentd1d2} and Corollary \ref{non-geo-thm}, it is
interesting to remark that the point at infinity appearing in the
trace of the Frobenius on $H^1_{\chi_3}$, and not in the affine
trace, behaves like any other point.

\begin{corollary} \label{non-geo-thm}Let $X_1, \ldots, X_q$ be $q$ i.i.d. random
variables taking the value $0$ with probability $2/(q+2)$ and any of
the values $1, \omega, \omega^2$ with probability $q/(3(q+2))$. Then for any $\varepsilon > 0$,
\begin{eqnarray*}
\frac{\left|\left\{ F \in \mathcal{F}_{(d_1,d_2)} \;:\; S_3(F) = s
\right\}\right|}{\left|\mathcal{F}_{(d_1,d_2)} \right|} &=& \Prob
\left( \sum_{i=1}^q X_i = s \right)\left(1+ O\left( q^{ -(1-\varepsilon) d_2+q}+q^{-(d_1-3q)/2} \right)\right)
\end{eqnarray*}
for any $s \in \mathbb Z[\omega]\subset \mathbb C$.
\end{corollary}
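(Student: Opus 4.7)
The plan is to stratify $\{F \in \mathcal F_{(d_1,d_2)} : S_3(F)=s\}$ by the full tuple of character values at the points of $\mathbb F_q$ and then to sum the estimate provided by \eqref{useful}. Writing $\mathbb F_q=\{x_1,\ldots,x_q\}$, every $F$ yields a tuple $(\chi_3(F(x_1)),\ldots,\chi_3(F(x_q)))\in\{0,1,\omega,\omega^2\}^q$ whose entries sum to $S_3(F)$, so
\[\left|\left\{F : S_3(F)=s\right\}\right|=\sum_{\substack{(\varepsilon_1,\ldots,\varepsilon_q)\in\{0,1,\omega,\omega^2\}^q\\ \varepsilon_1+\cdots+\varepsilon_q=s}}\left|\left\{F\in\mathcal F_{(d_1,d_2)}: \chi_3(F(x_i))=\varepsilon_i,\ 1\le i\le q\right\}\right|.\]
For a tuple with $m$ zero entries, the main term in \eqref{useful} factors precisely as $\frac{Kq^{d_1+d_2}}{\zeta_q(2)^2}\,\Prob(X_1=\varepsilon_1,\ldots,X_q=\varepsilon_q)$ by independence of the $X_i$ together with their prescribed distribution. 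Summing over all constrained tuples then yields the expected main contribution $\frac{Kq^{d_1+d_2}}{\zeta_q(2)^2}\,\Prob(\sum_i X_i=s)$.

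For the error, the key observation is that both exponents in the $O(\cdot)$ of \eqref{useful}, namely $-(1-\varepsilon)(d_2-m)+\varepsilon q$ and $-(d_1-m)/2+q$, are monotonically increasing in $m$. Since $0\le m\le q$ for every tuple, I bound each exponent uniformly by its value at $m=q$; the algebraic identities $\varepsilon q+(1-\varepsilon)q=q$ and $-(d_1-q)/2+q=-(d_1-3q)/2$ then give the uniform bound
\[q^{-(1-\varepsilon)(d_2-m)+\varepsilon q}+q^{-(d_1-m)/2+q}\ \le\ q^{-(1-\varepsilon)d_2+q}+q^{-(d_1-3q)/2}.\]
Consequently the total error, summed over tuples with $\sum_i\varepsilon_i=s$, is controlled by a constant multiple of the main sum times this uniform quantity, i.e.\ by $\frac{Kq^{d_1+d_2}}{\zeta_q(2)^2}\,\Prob(\sum_i X_i=s)\cdot O(q^{-(1-\varepsilon)d_2+q}+q^{-(d_1-3q)/2})$.

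Finally, dividing by $|\mathcal F_{(d_1,d_2)}|$ using the first asymptotic of Theorem \ref{countingF} absorbs the additional denominator error $O(q^{-(1-\varepsilon)d_2}+q^{-d_1/2})$, which is clearly dominated by the error above, and yields the stated asymptotic. There is no substantial obstacle beyond invoking Theorem \ref{countingF}: the only genuinely new ingredient is the elementary monotonicity in $m$ of the two error exponents, after which everything reduces to a routine summation in which the constraint $\sum_i\varepsilon_i=s$ is handled simply by keeping it on both sides of the comparison with the random-variable model.
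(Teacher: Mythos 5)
Your proposal is correct and follows essentially the same route as the paper: decompose over the tuples $(\varepsilon_1,\ldots,\varepsilon_q)$ with $\varepsilon_1+\cdots+\varepsilon_q=s$, apply Theorem \ref{countingF} to each tuple, and bound the error exponents uniformly via $m\le q$, which turns $-(1-\varepsilon)(d_2-m)+\varepsilon q$ and $-(d_1-m)/2+q$ into $-(1-\varepsilon)d_2+q$ and $-(d_1-3q)/2$. The only cosmetic difference is that you use the unnormalized count \eqref{useful} and divide by $|\mathcal F_{(d_1,d_2)}|$ at the end, while the paper invokes the already-normalized third statement of Theorem \ref{countingF}; this changes nothing of substance (and note the implicit requirement $\varepsilon<1$ for the monotonicity in $m$, just as in Theorem \ref{componentd1d2}).
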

\begin{proof} Using (\ref{definitionSF}), we write
\begin{eqnarray*} && \hspace{-0.4in} \frac{\left|\left\{ F \in \mathcal{F}_{(d_1,d_2)} \;:\; S_3(F) =  s
\right\}\right|}{\left|\mathcal{F}_{(d_1,d_2)} \right|} \\&=&
\sum_{{(\varepsilon_1, \dots, \varepsilon_q) \in \left\{ 0, 1, \omega,
\omega^2 \right\}} \atop {\varepsilon_1 + \dots + \varepsilon_q = s}}
\frac{\left|\left\{ F \in \mathcal{F}_{(d_1,d_2)} \;:\;
\chi_3(F(x_i)) = \varepsilon_i, \; 1 \leq i \leq q
\right\}\right|}{\left|\mathcal{F}_{(d_1,d_2)} \right|}\\
&=& \sum_{{(\varepsilon_1, \dots, \varepsilon_q) \in \left\{ 0, 1,
\omega, \omega^2 \right\}} \atop {\varepsilon_1 + \dots + \varepsilon_q
= s}}  \left( \frac{2}{q+2} \right)^m \left( \frac{q}{3(q+2)}
\right)^{q-m}\left(1+ O\left( q^{ -(1-\varepsilon) (d_2-m)+\varepsilon q}+q^{-(d_1-m)/2+q} \right)\right)\\
&=& \Prob \left( \sum_{i=1}^q X_i = s \right)\left(1+ O\left( q^{ -(1-\varepsilon) (d_2-q)+\varepsilon q}+q^{-(d_1-q)/2+q} \right)\right).
\end{eqnarray*}
\end{proof}

\begin{proof}[Proof of Theorem \ref{componentd1d2}.]
We now proceed to the proof of Theorem \ref{componentd1d2}, assuming
Theorem \ref{countingF}.  When we write a cyclic trigonal curve as
\begin{equation} \label{trigonal}
C_F: Y^3=F(X)
\end{equation}
where $F(X)$ is cube-free, we are choosing an affine model of the
curve. To compute the statistics for the components $\mathcal
H^{(d_1, d_2)}$ of the moduli space $\mathcal H_{g, 3}$, we need to
work with families where we count each curve, seen as a projective
variety of dimension $1$, up to isomorphism, with the same
multiplicity. To do so, we have to consider all cube-free
polynomials in $\F_q[X]$, and not only monic ones. We fix a
genus $g$, and a component $\mathcal{H}^{(d_1, d_2)}$ for this genus
as in equation (\ref{modulicomponents}). For each point of this component, we want to count its affine models   $C': Y^3 =
G(X)$ with $G \in \widehat{\mathcal{F}}_{[d_1, d_2]}$.

For $g \geq 5$, the curves $C'$ isomorphic to $C$ are obtained from
the automorphisms of $\PR^1(\F_q)$, namely the $q(q^2-1)$ elements
of $\PGL_2(\F_q)$. By running over the elements of $\PGL_2(\F_q)$,
we obtain $q(q^2-1)/|\mbox{Aut}(C)|$ different models $C': Y^3 = G(X)$
where  $G \in \widehat{\mathcal{F}}_{[d_1, d_2]}$. This shows that
\begin{eqnarray} \label{correctcount}
|\mathcal H^{(d_{1},d_{2})}|'=\sideset{}{'}{\sum}_{C \in \mathcal H^{(d_{1},d_{2})}} 1 = \sum_{C \in \mathcal H^{(d_{1},d_{2})}} \frac{1}{|\mbox{Aut}(C)|} &=&
\frac{ |\widehat{\mathcal F}_{[d_{1},d_{2}]}|}{q(q^{2}-1)}.
\end{eqnarray}

We denote
\begin{eqnarray*}
\widehat{S}_3(F) &=& \sum_{x \in \mathbb P^1(\mathbb F_q)}
\chi_3(F(x)),\end{eqnarray*} where the value of $F$ at the point at
infinity is defined below. 
Fix  an enumeration of the points on $\PR^1(\mathbb F_q)$, $x_1,
\dots, x_{q+1}$, such that $x_{q+1}$ denotes the point at infinity.
Then
\[
F(x_{q+1})= \begin{cases} \text{leading coefficient of $F$} & F \in
\widehat{\mathcal F}_{(d_{1},d_{2})}, \\ 0 & F \in \widehat{\mathcal
F}_{(d_{1}-1,d_{2})} \cup \widehat{\mathcal F}_{(d_{1},d_{2}-1)}.
\end{cases}
\]
Therefore, $\widehat{S}_3(F)+\overline{\widehat{S}_3(F)}$ is equal
to
\begin{align*}
S_3(F)+\overline{S_3(F)}+
 \begin{cases}
2 & F \in \widehat{\mathcal F}_{(d_1, d_2)} \text{ and leading
coefficient of $F$ is a cube,}
\\
-1 & F \in \widehat{\mathcal F}_{(d_1, d_2)} \text{ and leading
coefficient of $F$ is not a cube,}
\\
0 & F \in \widehat{\mathcal F}_{(d_1-1, d_2)}  \cup
\widehat{\mathcal F}_{(d_{1},d_{2}-1)}.
\end{cases}
\end{align*}
Then, the number of points on the projective curve $C_F$ with affine
model (\ref{trigonal}) is given by
\[
\sum_{x \in \mathbb P^1(\mathbb F_q)} 1 + \chi_3(F(x)) +
\overline{\chi_3(F(x))} = q+1+\widehat{S}_3(F) +
\overline{\widehat{S}_3(F)}
\]
and
\begin{eqnarray} \label{valueoffrob}
\Tr (\Frob_C|_{H^1_{\chi_3}}) &=& - \widehat{S}_3(F)\\
\Tr (\Frob_C|_{H^1_{\overline \chi_3}}) &=& - \overline {\widehat S _3(F)}.
\end{eqnarray}
As in (\ref{correctcount}), we write
\begin{eqnarray} \label{otherone}
\left|\left\{C \in \mathcal H^{(d_1,d_2)}:\Tr (\Frob_C|_{H^1_{\chi_3}})= - s \right\}\right|' = \sum_{{C
\in \mathcal{H}^{(d_1,d_2)}} \atop {\Tr(\Frob_C|_{H^1_{\chi_3}})=-s}} \frac{1}{|\mbox{Aut}(C)|}.
\end{eqnarray}
It then follows from (\ref{correctcount}), (\ref{valueoffrob}) and
(\ref{otherone}) that
\begin{eqnarray} \label{countH}
\frac{\left|\left\{C \in \mathcal H^{(d_1,d_2)}:\Tr (\Frob_C|_{H^1_{\chi_3}})=-s
\right\}\right|'}{\left|\mathcal H^{(d_1,d_2)}\right|'}   &=& \frac{\left|\left\{F \in
\widehat{\mathcal F}_{[d_{1},d_{2}]} \;:\; \widehat{S}_3(F)=s\right\}\right|}{
\left|\widehat{\mathcal F}_{[d_{1},d_{2}]} \right|}.
\end{eqnarray}

We now rewrite (\ref{countH}) in terms of polynomials in ${\mathcal
F}_{(d_1, d_2)}$. We first compute
\begin{eqnarray} \nonumber |\widehat{\mathcal{F}}_{[d_1,d_2]}|
  &=&
(q-1)\left(\left|\mathcal{F}_{(d_1,d_2)}\right|+
\left|\mathcal{F}_{(d_1-1,d_2)}\right|+
\left|\mathcal{F}_{(d_1,d_2-1)}\right|\right ) \\ \label{numberpoly}&= & \frac{K}{\zeta_q(2)^2}
\frac{(q+2)(q-1)}{q} q^{d_1+d_2} \left(1+ O\left(q^{-(1-\varepsilon) d_2} +q^{-d_1/2 }  \right)\right)
\end{eqnarray}
by Theorem \ref{countingF}.

Fix a $(q+1)$-tuple $(\varepsilon_1, \dots,
\varepsilon_{q+1})$ where $\varepsilon_i  \in \{ 0, 1, \omega,
\omega^2\}$ for $1 \leq i \leq q+1$. Denote by $m$ the number of $i$
such that $\varepsilon_i=0$. We want to evaluate the probability
that the character $\chi_3$ takes exactly these values at the points
$F(x_1), \dots, F(x_{q+1})$ where  $x_{q+1}$ is the point at
infinity of $\PR^1(\F_q)$, as $F$ ranges over $ \widehat{\mathcal
F}_{[d_{1},d_{2}]}$.

\textbf{Case 1:} $\varepsilon_{q+1}=0$.

In this case, only polynomials from $\widehat{\mathcal
F}_{(d_{1}-1,d_{2})} \cup \widehat{\mathcal F}_{(d_{1},d_{2}-1)}$
can have $\chi_3(F(x_{q+1})) = \varepsilon_{q+1}$. Also, the number of zeros among $\varepsilon_1,
\dots, \varepsilon_q$ is now $m-1$. Thus using \eqref{useful}
\begin{eqnarray} \label{case1section3}\nonumber
&&\left|\left\{ F \in \widehat{\mathcal F}_{[d_{1},d_{2}]} :
\chi_3(F(x_i))=\varepsilon_i, 1 \leq i \leq q+1 \right\}\right|
\\   \nonumber &&=
\sum_{\alpha \in \F_q^*} \left |\left\{ F \in {\mathcal F}_{(d_{1}-1,d_{2})}
\cup {\mathcal F}_{(d_{1},d_{2}-1)} :
\chi_3(F(x_i))=\varepsilon_i \chi_3^{-1}(\alpha), 1 \leq i \leq q
\right\}\right|
\\ \nonumber
&& = 2 (q-1) \left( \frac{ K q^{d_1+d_2-1}}{\zeta_q(2)^2}  \left(
\frac{2}{q+2} \right)^{m-1}  \left( \frac{q}{3(q+2)} \right)^{q-m+1}
\right)
\\
&&\times \left(1+ O\left( q^{ -(1-\varepsilon) (d_2-m)+\varepsilon q}+q^{-(d_1-m)/2+q} \right)\right).
\end{eqnarray}

\textbf{Case 2:} $\varepsilon_{q+1}=1, \omega,$ or $\omega^2$.

In this case, only polynomials from $\widehat{\mathcal
F}_{(d_{1},d_{2})}$ can have  $\chi_3(F(x_{q+1})) = \varepsilon_{q+1}$, and there are $m$ values of
$\varepsilon_1, \dots, \varepsilon_q$ which are zero. Thus
\begin{eqnarray} \label{case2section3}\nonumber
&&\left|\left\{ F \in \widehat{\mathcal F}_{[d_{1},d_{2}]} :
\chi_3(F(x_i))=\varepsilon_i, 1 \leq i \leq q+1 \right\}\right| \\ \nonumber
 &&= \sum_{{\alpha \in
\F_q^*}\atop{\chi_3(\alpha)=\varepsilon_{q+1}}}\left|\left\{ F \in {\mathcal
F}_{(d_{1},d_{2})} :
\chi_3(F(x_i))=\varepsilon_i\varepsilon_{q+1}^{-1}, 1 \leq i \leq q
\right\}\right|
\\
&&= \nonumber \frac{q-1}{3} \frac{K q^{d_1+d_2}}{\zeta_q(2)^2}  \left(
\frac{2}{q+2} \right)^m \left( \frac{q}{3(q+2)} \right)^{q-m}
\\
&&\times \left(1+ O\left( q^{ -(1-\varepsilon) (d_2-m)+\varepsilon q}+q^{-(d_1-m)/2+q} \right)\right),
\end{eqnarray}
which is the same as \eqref{case1section3}.

Then, it follows from \eqref{numberpoly}, \eqref{case1section3} and \eqref{case2section3} that
\begin{eqnarray*}
\frac{\left|\left\{F \in \widehat{\mathcal{F}}_{[d_1,d_2]}  \;:\;
\chi_3(F(x_i))=\varepsilon_i, 1 \leq i \leq
q+1\right\}\right|}{\left|\widehat{\mathcal{F}}_{[d_1,d_2]}\right|} &=& \left(
\frac{2}{q+2} \right)^{m}  \left( \frac{q}{3(q+2)} \right)^{q+1-m}\\
&\times& \left(1+ O\left( q^{ -(1-\varepsilon) (d_2-m)+\varepsilon q}+q^{-(d_1-m)/2+q} \right)\right).\\
\end{eqnarray*}
Putting everything together, we obtain
\begin{eqnarray*}
&& \frac{\left|\left\{C \in \mathcal H^{(d_1,d_2)} \;:\; \Tr (\Frob_C)= -s
\right\}\right|'}{\left|\mathcal H^{(d_1,d_2)}\right|'}  = \frac{\left|\left\{F \in
\widehat{\mathcal F}_{[d_{1},d_{2}]}
  \;:\; \widehat{S}_3(F)= s \right\}\right|}{
\left|\widehat{\mathcal F}_{[d_{1},d_{2}]} \right|} \\
&&\\
&&= \sum_{{(\varepsilon_1, \dots, \varepsilon_{q+1})} \atop
{\varepsilon_1 + \cdots + \varepsilon_{q+1}= s}} \frac{\left|\left\{F \in
\widehat{\mathcal{F}}_{[d_1,d_2]}   \;:\;
\chi_3(F(x_i))=\varepsilon_i, 1 \leq i \leq
q+1\right\}\right|}{\left|\widehat{\mathcal{F}}_{[d_1,d_2]}\right|} \\
&&= \sum_{{(\varepsilon_1, \dots, \varepsilon_{q+1})} \atop
{\varepsilon_1 + \cdots + \varepsilon_{q+1}= s}} \left( \frac{2}{q+2}
\right)^{m}  \left( \frac{q}{3(q+2)} \right)^{q+1-m}\left(1+ O\left( q^{ -(1-\varepsilon) (d_2-m)+\varepsilon q}+q^{-(d_1-m)/2+q} \right)\right)
\\
&&= \Prob \left( \sum_{i=1}^{q+1} X_i =  s \right)\left(1+ O\left( q^{ -(1-\varepsilon) (d_2-q)+\varepsilon q}+q^{-(d_1-q)/2+q} \right)\right)
\end{eqnarray*}
where $X_1, \ldots, X_{q+1}$ $q+1$ are i.i.d.\ random variables that
take the value $0$ with probability $2/(q+2)$ and $1, \omega, \omega^2$
each with probability $q/(3(q+2)).$
\end{proof}
We concentrate on the proof of Theorem \ref{countingF} in the next
section.

\section{Distribution of the trace for cube-free polynomials}
\label{generalcount}

In this section we prove Theorem \ref{countingF} by obtaining asymptotic formulas for $|\mathcal F_{(d_1,d_2)}|$  and \\$\left|\left\{ F \in \mathcal{F}_{(d_1,d_2)} : F(x_i) = a_i,
1 \leq i \leq q \right\}\right| $ for $d_1, d_2\rightarrow \infty$. We begin with two lemmas that count the number of polynomials that obtain specified nonzero values and are relatively prime to a fixed polynomial.

\begin{lemma}\label{lemma-V}
For  $\min\{d, q \} \geq\ell \geq 0$ let $x_1, x_2, \dots, x_{\ell} \in \F_q$ be distinct elements. Let $U \in \F_q[X]$ be such that $U(x_i)\neq 0$ for $i=1, \dots, \ell$. Let $a_1, a_2, \dots, a_{\ell} \in \F_q^*$, then
\begin{equation*}
\left|\left\{F \in V_d : (F,U)=1, F(x_i)=a_i, 1 \leq i \leq \ell \right\}\right|=q^{d-\ell} \prod_{P | U} (1-q^{-\deg P}).
\end{equation*}
Note that when $\ell=0$, there is no condition imposed at any point in $\mathbb F_q$.
\end{lemma}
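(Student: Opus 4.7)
The natural approach is Möbius inversion on $\F_q[X]$. The identity
\[
\mathbf{1}_{(F,U)=1} = \sum_{D \mid \gcd(F,U)} \mu(D),
\]
where the sum runs over monic divisors and $\mu$ is the polynomial Möbius function, substituted into the count and then swapped with the outer sum over $F$, gives
\[
\bigl|\{F \in V_d : (F,U)=1,\, F(x_i)=a_i\}\bigr| = \sum_{\substack{D \mid U \\ D \text{ monic squarefree}}} \mu(D)\, N_D,
\]
where $N_D := |\{F \in V_d : D\mid F,\, F(x_i)=a_i,\, 1 \leq i \leq \ell\}|$ and only squarefree $D$ contribute since $\mu$ vanishes elsewhere.

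To compute $N_D$, I would write each such $F$ uniquely as $F = DG$ with $G$ monic of degree $d-\deg D$. The hypothesis $U(x_i)\neq 0$ combined with $D\mid U$ forces $D(x_i)\neq 0$, so the interpolation conditions become $G(x_i) = a_i/D(x_i)$, with the shifted targets still lying in $\F_q^*$. Lemma \ref{KRlemma4} then yields $N_D = q^{d-\deg D-\ell}$. Substituting and factoring out $q^{d-\ell}$ reduces the sum to the standard Euler product,
\begin{align*}
q^{d-\ell} \sum_{\substack{D\mid U \\ D\text{ sqfree}}} \mu(D)\, q^{-\deg D}
= q^{d-\ell} \prod_{P\mid U} \bigl(1-q^{-\deg P}\bigr),
\end{align*}
by multiplicativity over the distinct monic irreducible divisors of $U$.

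The one point requiring care is that Lemma \ref{KRlemma4} applies to $N_D$ only when $d - \deg D \geq \ell$, which holds for every squarefree $D\mid U$ as soon as $d \geq \deg U + \ell$; this is the regime in which the lemma is invoked in Section \ref{generalcount}. An alternative route, valid under the same hypothesis, uses the Chinese Remainder Theorem: the assumption that $U$ and $\prod_i(X-x_i)$ share no common root forces their squarefree parts to be coprime, and a direct count of residues of $F$ modulo the product of the squarefree part of $U$ with $\prod_i(X-x_i)$ (enumerating units in $\F_q[X]/(U)$ compatible with the prescribed values $a_i$) yields the same formula without passing through Möbius inversion.
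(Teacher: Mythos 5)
Your argument is essentially identical to the paper's proof: M\"obius inversion of the coprimality condition over divisors $D \mid U$, the substitution $F = DG$ with shifted interpolation targets $a_i D(x_i)^{-1}$ (legitimate since $U(x_i)\neq 0$), an appeal to Lemma~\ref{KRlemma4}, and multiplicativity of $\sum_{D\mid U}\mu(D)q^{-\deg D}$ to produce the Euler product. Your caveat that Lemma~\ref{KRlemma4} needs $d-\deg D\geq\ell$ is in fact more careful than the paper, which applies that lemma to every divisor without comment; the formula is only used (and only valid) when $d$ is large relative to $\deg U+\ell$, so your restriction, and the CRT alternative you sketch, are sound and harmless.
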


\begin{proof}
By inclusion-exclusion we have
\begin{equation*}
\left|\left\{F \in V_d : (F,U)=1, F(x_i)=a_i, 1 \leq i \leq \ell \right\}\right|=\sum_{D | U} \mu(D) \sum_{{{F\in V_d}\atop{D| F}}\atop{F(x_i)=a_i}} 1,
\end{equation*}
where $\mu$ is the M\"oebius function. Using the fact that $U(x_i)\neq 0$ this equals
\begin{align*}
 \sum_{D | U} \mu(D) \sum_{{G \in V_{d-\deg D}}\atop{G(x_i)=a_i D(x_i)^{-1}}}1.
\end{align*}
By Lemma \ref{KRlemma4}  this equals
\[
\sum_{D | U} \mu(D)  q^{d-\deg D-\ell} =q^{d-\ell} \sum_{D| U} \mu(D) q^{-\deg D}.
\]
The function $f(D)=\mu(D)q^{-\deg D}$ is multiplicative, so is $g(U)=\sum_{D|U}f(D)$, and
\[
g(P^e)=1-q^{-\deg P}
\]
for $e\geq 1$.
Applying this to the last equation,
\[
\left|\left\{F \in V_d : (F,U)=1, F(x_i)=a_i, 1 \leq i \leq \ell \right\}\right|=q^{d-\ell} \prod_{P | U} (1-q^{-\deg P}).
\]
\end{proof}

Our proof of the next lemma follows the same steps as the proof of Lemma \ref{KRlemma5} (Lemma 5 in \cite{KR}), with the added condition that the polynomials we are counting are relatively prime to a fixed polynomial.

\begin{lemma} \label{lemma-T}
 For $q\geq \ell \geq 0$ let $x_1, \ldots, x_{\ell} \in \mathbb F_q$ be distinct elements. Let $U \in \F_q[X]$ be such that $U(x_i)\neq 0$ for $i=1, \dots, \ell$.  Let $a_1, \ldots, a_\ell \in \mathbb F_q^*$. Let $S_{d}^U(\ell)$ be the number of elements in the set
\[  \left\{ F \in \mathcal{F}_d: (F, U)=1, \; F(x_i)=a_i, \;
1 \leq i \leq \ell \right\}. \]
Then
\[\
S_{d}^U(\ell) =\frac{q^{d-\ell}}{\zeta_q(2) (1-q^{-2})^{\ell}}
\prod_{P \mid U} (1 + q^{-\deg{P}})^{-1} + O \left( q^{d/2} \right).
\]
\end{lemma}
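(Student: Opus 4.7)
The plan is to adapt the Möbius-over-squares technique from the proof of Lemma \ref{KRlemma5}, replacing the inner count (given there by Lemma \ref{KRlemma4}) with our Lemma \ref{lemma-V} so as to carry the coprimality condition to $U$. Using $\mathbf{1}_{F\text{ squarefree}} = \sum_{A^2\mid F}\mu(A)$ and writing $F = A^2G$, the condition $(F,U)=1$ splits as $(A,U)=(G,U)=1$, while $F(x_i) = a_i \neq 0$ forces $A(x_i)\neq 0$ and $G(x_i) = a_i A(x_i)^{-2}$. This yields
\[
S_d^U(\ell) = \sum_{\substack{A \text{ monic}\\ (A,U)=1,\, A(x_i)\neq 0}} \mu(A) \left|\left\{G \in V_{d-2\deg A} : (G,U)=1,\; G(x_i) = a_iA(x_i)^{-2}\right\}\right|.
\]

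For $\deg A \leq (d-\ell)/2$, so that $\deg G \geq \ell$, Lemma \ref{lemma-V} (using the hypothesis $q\geq\ell$) evaluates the inner count as $q^{d-2\deg A-\ell}\prod_{P\mid U}(1-q^{-\deg P})$. I would then complete the resulting $A$-sum to all monic $A$ coprime to $V := U\prod_{i=1}^{\ell}(X-x_i)$, producing an Euler product for the main term. Since $U(x_i)\neq 0$ and the $x_i$ are distinct, the factors $X-x_i$ are distinct primes coprime to $U$, so
\[
q^{d-\ell}\prod_{P\mid U}(1-q^{-\deg P})\prod_{P\nmid V}(1-q^{-2\deg P}) = \frac{q^{d-\ell}}{\zeta_q(2)(1-q^{-2})^\ell}\prod_{P\mid U}(1+q^{-\deg P})^{-1},
\]
via $\prod_P(1-q^{-2\deg P})=\zeta_q(2)^{-1}$ and $(1-q^{-\deg P})/(1-q^{-2\deg P})=(1+q^{-\deg P})^{-1}$. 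This is precisely the claimed main term.

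What remains is to absorb two error contributions into $O(q^{d/2})$. The tail of the completed Dirichlet sum is bounded by $q^{d-\ell}\sum_{\deg A>(d-\ell)/2}q^{-\deg A}=O(q^{(d-\ell)/2})$, using that there are $q^a$ monic polynomials of degree $a$ and $|\mu|\leq 1$. The contribution from the range $(d-\ell)/2<\deg A\leq d/2$, where Lemma \ref{lemma-V} fails because $\deg G<\ell$, is handled by observing that a polynomial of degree $<\ell$ is over-determined by $\ell$ prescribed point-values, so the inner count is $0$ or $1$; summing over at most $\sum_{a\leq d/2}q^a=O(q^{d/2})$ monic $A$'s gives the bound. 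Both estimates are uniform in $U$, so the coprimality condition introduces no essential new difficulty, and I expect this mild boundary bookkeeping to be the only real obstacle.
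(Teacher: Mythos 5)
Your proposal is correct and follows essentially the same route as the paper's proof: Möbius inversion over square divisors $F=A^2G$, evaluation of the inner count via Lemma \ref{lemma-V}, completion of the resulting sum to an Euler product (the paper writes it as $\zeta_q(2s)^{-1}(1-q^{-2s})^{-\ell}\prod_{P\mid U}(1-|P|^{-2s})^{-1}$, which is your product over $P\nmid V$), and the same two error estimates for the tail and for the boundary range where $\deg G<\ell$. The only blemish is notational: the tail bound should read $q^{d-\ell}\sum_{a>(d-\ell)/2}q^{a}\cdot q^{-2a}=O(q^{(d-\ell)/2})$, which is clearly what you intend.
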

\begin{proof}
By inclusion-exclusion we have
\begin{eqnarray*}
S_d^U(\ell)=\sum_{{D,\, \deg D \leq d/2}\atop{(D, U)=1}} \mu(D) \left| \left\{F\in V_{d-2 \deg(D)} : (F, U)=1,  D(x_i)^2F(x_i)=a_i, 1 \leq i \leq \ell\right\}\right|.
\end{eqnarray*}
We denote by $\tilde\sum$ the sum over all polynomials $D$ such that  $D(x_i)\neq 0$ for $1 \leq i \leq \ell$. Then
\begin{eqnarray*}
S_d^U(\ell)=\tilde{\sum _{{\deg D \leq d/2}\atop{ (D, U)=1}}} \mu(D) \left| \left\{F\in V_{d-2 \deg(D)} : (F, U)=1, F(x_i)=a_iD(x_i)^{-2}, 1 \leq i \leq \ell\right\}\right|.
\end{eqnarray*}
For $d-2\deg D \geq \ell$, by Lemma \ref{lemma-V} we have
\[
\left | \left\{F\in V_{d-2 \deg D} : (F, U)=1, F(x_i)=a_iD(x_i)^{-2}, 1 \leq i \leq \ell \right\}\right| =q^{d-2\deg D-\ell} \prod_{P | U} (1-q^{-\deg P}).
\]
Therefore
\begin{equation}\label{eqn-S_d}
S_d^U(\ell)= q^{d-\ell}\prod_{P | U} (1-q^{-\deg P})\tilde{ \sum_{{\deg D < (d-\ell)/2 }\atop{(D,U)=1}} }\mu(D)q^{-2 \deg D} + \text{ Error}.
\end{equation}
There is at most one $F$ of degree less than or equal to $\ell$ that takes $\ell$ prescribed values at $\ell$ distinct points, thus
\begin{equation}\label{eqn:error}
\text{Error } \ll \sum_{(d-\ell)/2 \leq \deg D \leq d/2} 1 = q^{d/2}\left(\frac{1-q^{-\ell/2-1}}{1-q^{-1}} \right)=O(q^{d/2}).
\end{equation}
Now we observe that
\begin{equation}\label{eqn:sum}
\tilde{\sum_{{\deg D < (d-\ell)/2}\atop{(D,U)=1}}}  \mu(D)q^{-2 \deg D} = \tilde{\sum_{{D}\atop{(D,U)=1} }  }\mu(D)q^{-2 \deg D} +O(q^{(\ell-d)/2})
\end{equation}
and
\[
\tilde{\sum_{{D} \atop{(D,U)=1}}} \mu(D)|D|^{-2s}=\prod_{{P, P(x_i)\neq 0}\atop{P \nmid U}} (1-|P|^{-2s}).
\]
Using that $U(x_i)\neq 0$, we find that
\begin{equation}\label{eqn:sum2}
\tilde{\sum_{{D} \atop{(D,U)=1}}} \mu(D)|D|^{-2s}=\frac{1}{\zeta_q(2s) (1-q^{-2s})^\ell} \prod_{P | U} (1-|P|^{-2s})^{-1}.
\end{equation}
By \eqref{eqn-S_d}, \eqref{eqn:error},     \eqref{eqn:sum} and \eqref{eqn:sum2} we have
\begin{eqnarray*}
&&S_d^U(\ell)= q^{d-\ell}\prod_{P | U} (1-q^{-\deg P}) \left( \frac{1}{\zeta_q(2) (1-q^{-2})^\ell} \prod_{P | U} (1-|P|^{-2})^{-1}+O(q^{(\ell-d)/2}) \right)
+ O(q^{d/2})
\\
&&=\frac{q^{d-\ell}}{\zeta_q(2) (1-q^{-2})^{\ell}}
\prod_{P \mid U} (1 + q^{-\deg{P}})^{-1} + O \left( q^{d/2} \right).
\end{eqnarray*}
\end{proof}

We now use Lemma \ref{lemma-T} along with the function field version of the Tauberian Theorem to count the number of polynomials in $\mathcal F_{(d_1, d_2)}$ that take a prescribed set of nonzero values on $\ell$ points.
\begin{proposition} \label{nonzerovalues} Let $0 \leq \ell \leq q$, let $x_1, \dots, x_\ell$ be
distinct elements of $\F_q$, and $a_1, \dots, a_\ell \in \F_q^*$.
Then for any $1>\varepsilon > 0$, we have
\begin{eqnarray*} \left|\left\{F \in \mathcal{F}_{(d_1,d_2)}: F(x_i)=a_i,
1\leq i \leq \ell\right\}\right|=
\frac{Kq^{d_1+d_2}}{\zeta_q(2)^2 }\left(
\frac{q}{(q+2)(q-1)}\right)^\ell \left(1+
O\left(q^{-(1-\varepsilon)d_2+\varepsilon \ell}+ q^{-d_1/2+\ell }\right)\right)
\end{eqnarray*}
where
\begin{eqnarray}
\label{constantC} K =  \prod_{P} \left( 1 - \frac{1}{(|P|+ 1)^2}
\right),
\end{eqnarray}
and the product runs over all monic irreducible polynomials of
$\F_q[X]$.

In particular, we have
\begin{eqnarray} \label{calculF}
\left| \mathcal{F}_{(d_1, d_2)} \right| &=& \frac{K
q^{d_1+d_2}}{\zeta_q(2)^2} \left( 1 + O \left(
q^{-(1-\varepsilon) d_2} + q^{- d_1/2}\right) \right).
\end{eqnarray}
\end{proposition}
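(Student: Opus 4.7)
My plan is to decompose each $F \in \mathcal F_{(d_1,d_2)}$ as $F = F_1 F_2^2$, count the admissible $F_1$'s by means of Lemma \ref{lemma-T}, and treat the outer sum over $F_2$ by means of a function-field Tauberian argument. Since $F(x_i) = F_1(x_i)F_2(x_i)^2 = a_i \neq 0$ forces $F_2(x_i) \neq 0$, the outer variable $F_2$ ranges over elements of $\mathcal F_{d_2}$ with $F_2(x_i) \neq 0$ for all $i$, and for each such $F_2$ the inner variable $F_1$ runs over elements of $\mathcal F_{d_1}$ coprime to $F_2$ and satisfying $F_1(x_i) = a_i F_2(x_i)^{-2}$. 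Lemma \ref{lemma-T} with $U = F_2$ gives
\[
\frac{q^{d_1-\ell}}{\zeta_q(2)(1-q^{-2})^\ell}\prod_{P\mid F_2}(1+|P|^{-1})^{-1} + O\!\left(q^{d_1/2}\right),
\]
and summing the error $O(q^{d_1/2})$ over the $O(q^{d_2})$ choices of $F_2$ produces $O(q^{d_1/2+d_2})$, which accounts for the $q^{-d_1/2 + \ell}$ term in the final relative error.

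It then remains to estimate
\[
T(d_2, \ell) := \sum_{\substack{F_2 \in \mathcal F_{d_2} \\ F_2(x_i) \ne 0 \,\forall i}} h(F_2), \qquad h(F_2) := \prod_{P \mid F_2} (1 + |P|^{-1})^{-1},
\]
whose generating function factors as an Euler product
\[
A(u) := \sum_{d \geq 0} T(d,\ell)\, u^d = \frac{G(u)}{\left(1 + u/(1+q^{-1})\right)^\ell}, \qquad G(u) := \prod_P \!\left(1 + \frac{u^{\deg P}}{1+|P|^{-1}}\right),
\]
the denominator cancelling the local factors at the primes $P = X - x_i$. I will compare $G$ to the known product $\prod_P (1 + u^{\deg P}) = (1-qu^2)/(1-qu)$: the ratio $H(u) := G(u)\prod_P (1+u^{\deg P})^{-1}$ is an Euler product whose $P$-factor equals $1 - \frac{u^{\deg P}/(|P|+1)}{1+u^{\deg P}}$ and therefore deviates from $1$ by $O(u^{\deg P}/|P|)$. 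Since $\sum_P |u|^{\deg P}/|P|$ converges for $|u|<1$, the function $H(u)$ extends analytically to the disk $|u|<1$, whence so does $A(u)(1-qu) = (1-qu^2)H(u)/(1+u/(1+q^{-1}))^\ell$. A contour integral on $|u|=q^{-\varepsilon}$ then yields the Tauberian estimate
\[
T(d_2, \ell) = R(1/q)\, q^{d_2}\!\left(1 + O\!\left(q^{-(1-\varepsilon) d_2 + \varepsilon \ell}\right)\right), \qquad R(u) := A(u)(1-qu),
\]
the $\varepsilon \ell$ in the exponent absorbing the size of $(1 + u/(1+q^{-1}))^{-\ell}$ along the contour.

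To finish, I compute $R(1/q)$: a local-factor computation gives
\[
(1-u^{\deg P})\!\left(1 + \tfrac{u^{\deg P}}{1+|P|^{-1}}\right)\bigg|_{u=1/q} = \frac{(|P|-1)(|P|+2)}{|P|(|P|+1)},
\]
and factoring the resulting infinite product as $\prod_P \tfrac{|P|(|P|+2)}{(|P|+1)^2} \cdot \prod_P \tfrac{|P|^2-1}{|P|^2} = K/\zeta_q(2)$ yields $R(1/q) = (K/\zeta_q(2))\bigl((q+1)/(q+2)\bigr)^\ell$. Combining with the prefactor $q^{d_1-\ell}/(\zeta_q(2)(1-q^{-2})^\ell)$ and simplifying $q^{-1}(1-q^{-2})^{-1}(q+1)/(q+2) = q/((q-1)(q+2))$ produces the main term $Kq^{d_1+d_2}/\zeta_q(2)^2 \cdot \bigl(q/((q+2)(q-1))\bigr)^\ell$; formula (\ref{calculF}) is the $\ell = 0$ specialisation. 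The main obstacle is verifying analyticity of $H(u)$ on $|u|<1$ together with an $\ell$-uniform Tauberian error; the residue and constant-matching, while delicate, are essentially bookkeeping.
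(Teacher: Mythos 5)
Your proposal is correct and follows essentially the same route as the paper: the decomposition $F=F_1F_2^2$ with the inner count handled by Lemma \ref{lemma-T}, the outer sum over $F_2$ evaluated through the Euler product (your $H(u)$ is exactly the paper's $H(s)$ under $u=q^{-s}$), and the same residue value $\frac{K}{\zeta_q(2)}\left(\frac{q+1}{q+2}\right)^\ell$ and error terms. Your direct contour-integral extraction on $|u|=q^{-\varepsilon}$ is just an unpacking of the paper's appeal to the function-field Tauberian theorem (Theorem 17.1 of \cite{Ro}) together with its remark on the $q$-uniform bound $\left(1-q^{-\varepsilon}\right)^{-\ell}\ll q^{\varepsilon\ell}$, so there is no substantive difference.
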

\begin{proof}
First we observe that
\begin{eqnarray*}
{\left|\left\{ F \in \mathcal{F}_{(d_1,d_2)} : F(x_i) = a_i, 1
\leq i \leq \ell \right\}\right|} &=& \sum_{{F_2 \in \mathcal{F}_{d_2}}\atop {F_2(x_i)\neq 0, 1 \leq i \leq \ell}} \sum_{{{F_1 \in
\mathcal{F}_{d_1}}\atop{ F_1(x_{i})=a_i F_2(x_i)^{-2}, 1 \leq i \leq \ell } }\atop{(F_1,F_2)=1}}
1
\\
&&= \sum_{{F \in \mathcal{F}_{d_2}}\atop {F_2(x_i)\neq 0, 1 \leq i \leq \ell}} S_{d_1}^{F_2}(\ell).
\end{eqnarray*}
Using Lemma \ref{lemma-T} we have that
\begin{eqnarray*}\label{eq:fk}
&&{\left|\left\{ F \in \mathcal{F}_{(d_1,d_2)} \;:\; F(x_i) = a_i,
\; 1
\leq i \leq \ell \right\}\right|} \nonumber
\\&& =  \frac{q^{d_1-\ell}}{\zeta_q(2) (1-q^{-2})^\ell}\sum_{{F_2
\in \mathcal{F}_{d_2}}\atop{ F_2(x_i)\neq 0, 1 \leq i \leq \ell}} \prod_{P|F_2}(1+q^{-\deg P})^{-1}
+  \sum_{{F_2
\in \mathcal{F}_{d_2}}\atop{ F_2(x_i)\neq 0, 1 \leq i \leq \ell}} O \left( q^{d_1/2}\right).
\end{eqnarray*}
Then by Lemma \ref{KRlemma3} and Lemma \ref{KRlemma5},
\begin{equation} \label{firstformula2}
\nonumber {\left|\left\{ F \in \mathcal{F}_{(d_1,d_2)} \;:\; F(x_i) = a_i,
\; 1
\leq i \leq \ell \right\}\right|}
= \frac{q^{d_1-\ell}}{\zeta_q(2) (1-q^{-2})^\ell} \sum_{\deg F=d_2} b(F)
 +  O \left( q^{d_2 +d_1/2}\right).
\end{equation}
where for any polynomial $F$
\begin{equation}\label{b's}
b(F) =\begin{cases}  \mu^2(F)\prod_{P \mid F} (1
+ |P|^{-1})^{-1} &  F(x_i)\neq 0, 1 \leq i \leq \ell,
\\
0 & \text{otherwise.}
\end{cases}
\end{equation}
To evaluate $\sum_{\deg{F}=d_2} b(F)$, we consider
the Dirichlet series
\begin{align*}
& G(s)=\sum_F \frac{b(F)}{|F|^s}= \prod_{{P}\atop{P(x_i)\neq 0, 1\leq i \leq \ell}} \left( 1 +
\frac{1}{|P|^s} \cdot \frac{|P|}{|P|+1} \right)
\\
&=\frac{\zeta_q(s)}{\zeta_q(2s)}  H(s)\left( 1+\frac{1}{q^{s-1}(q+1)}\right)^{-\ell},
\end{align*}
where
\[
H(s)=\prod_{P} \left( 1 - \frac{1}{(|P|^s
+ 1)(|P|+1)} \right).
\]
Notice that $H(s)$ converges absolutely for Re$(s)>0$, and $G(s)$ is
meromorphic for Re$(s)>0$ with simple poles at the points $s$ where
$\zeta_q(s)=(1-q^{1-s})^{-1}$ has poles, that is, $s_n=1+i
\frac{2\pi n}{\log q}$, with $n \in \mathbb Z$. Notice that
$H(1)=K$, where $K$ is the constant given in (\ref{constantC}) and
$\res_{s=1}\zeta_q(s)=\frac{1}{\log q}$.
Thus $G(s)$ has a simple pole at $s=1$ with residue
\[
\frac{K}{\zeta_q(2)\log q}\left( \frac{q+1}{q+2}\right)^\ell.
\]
Using Theorem 17.1 in \cite{Ro}, which is the function field version
of the Wiener-Ikehara Tauberian Theorem, we get that
\begin{eqnarray}\label{tauberian2}
\sum_{\deg F=d_2} b(F) = \frac{K}{\zeta_q(2)}\left( \frac{q+1}{q+2}\right)^\ell q^{d_2}+O_q(q^{\varepsilon d_2}).
\end{eqnarray}
We remark that it is important for Theorem \ref{momentsthm} and Corollary \ref{momentscor} to get an error term which is independent of $q$. From the proof of Theorem 17.1 in \cite{Ro}, one sees that the hidden constant in the error term of \eqref{tauberian2} is bounded by
\begin{eqnarray*}
&&\max_{|q^{-s}|=q^{-\varepsilon}}\left|H(s)\right|\left|\left(1+\frac{1}{q^{s-1}(q+1)}\right)^{-\ell}\right| \ll  \left(1-q^{-\varepsilon}\right)^{-\ell}
= \left(\frac{q^{\varepsilon}}{q^{\varepsilon}-1} \right)^\ell \ll q^{\varepsilon \ell}.
\end{eqnarray*}

Thus we have
\begin{eqnarray}\label{tauberian3}
\sum_{\deg{F}=d_2} b(F) = \frac{K}{\zeta_q(2)}
\left(\frac{q+1}{q+2}\right)^\ell q^{d_2} +
O\left(q^{\varepsilon(d_2+\ell)}\right).
\end{eqnarray}

Replacing \eqref{tauberian3} in \eqref{firstformula2}, we get
\begin{eqnarray*}
&&{\left|\left\{ F \in \mathcal{F}_{(d_1,d_2)} \;:\; F(x_i) = a_i,
\; 1
\leq i \leq \ell\right\}\right|} \nonumber \\
&&=
\frac{Kq^{d_1+d_2}}{\zeta_q(2)^2 }\left(
\frac{q}{(q+2)(q-1)}\right)^\ell  \left(1+
O\left(q^{-(1-\varepsilon)d_2+\varepsilon \ell}+ q^{-d_1/2+\ell }\right)\right).
\end{eqnarray*}
\end{proof}

Before we obtain the number of $\mathcal F_{(d_1, d_2)}$ that take any set of prescribed (zero or nonzero) values, we first need an intermediary step involving the number of zeros in $F_2$.
We recall that  $F \in \mathcal{F}_{(d_1,d_2)}^k$ is the set of
monic polynomials $F=F_1 F_2^2  \in \mathcal{F}_{(d_1,d_2)}$ such
that $F_2$ has exactly $k$ zeros over $\F_q$.

\begin{corollary}\label{whatever}
Let $x_1, \ldots, x_{q}$  be an enumeration of elements in $\mathbb F_q$. Let $a_{1}= \ldots = a_{m} = 0$, and $a_{m+1}, \ldots, a_q \in \mathbb F_q^*$.
Then, for $\varepsilon >0$
\begin{eqnarray*}
{\left|\left\{ F \in \mathcal{F}_{(d_1,d_2)}^k \;:\; F(x_i) = a_i,
\; 1
\leq i \leq q \right\}\right|} &=&\binom{m}{k}
\frac{Kq^{d_1+d_2}}{\zeta_q(2)^2 }\left( \frac{1}{q+2}\right)^m\left(
\frac{q}{(q+2)(q-1)}\right)^{q-m}
\\
&\times& \left(1+
O\left(q^{-(1-\varepsilon)(d_2-k)+\varepsilon q}+ q^{-(d_1+k-m)/2+ q }\right)\right).
\end{eqnarray*}
\end{corollary}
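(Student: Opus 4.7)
The plan is to reduce the count to the Tauberian/coprime-counting framework of Proposition \ref{nonzerovalues}, after a combinatorial step that identifies which of the prescribed zeros come from $F_2$. Writing $F = F_1 F_2^2$, the conditions $F \in \mathcal F_{(d_1,d_2)}^k$ and $F(x_i) = 0$ exactly for $i \leq m$ force the $k$ roots of $F_2$ in $\mathbb F_q$ to lie among $\{x_1, \ldots, x_m\}$ (else $F(x_i)=0$ for some $i > m$), and the remaining $m - k$ zeros of $F$ at these points to come from $F_1$, by coprimality of $F_1$ and $F_2$. Choosing a $k$-subset $R \subset \{x_1, \ldots, x_m\}$ to host the roots of $F_2$ contributes the factor $\binom{m}{k}$; for fixed $R$ and $Z = \{x_1,\ldots,x_m\}\setminus R$, I would factor $F_2 = L_R \tilde F_2$ and $F_1 = L_Z \tilde F_1$ with $L_R = \prod_{y\in R}(X - y)$, $L_Z = \prod_{z\in Z}(X - z)$, leaving $\tilde F_2 \in \mathcal F_{d_2-k}$ with no $\mathbb F_q$-root and $\tilde F_1 \in \mathcal F_{d_1-m+k}$ coprime with $U := L_Z L_R \tilde F_2$ and satisfying $\tilde F_1(x_i) = a_i L_R(x_i)^{-2} L_Z(x_i)^{-1} \tilde F_2(x_i)^{-2}$ at the $q-m$ points $x_i \in T := \{x_{m+1},\ldots,x_q\}$.

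For fixed $\tilde F_2$, Lemma \ref{lemma-T} with $d = d_1 - m + k$ and $\ell = q - m$ would count the admissible $\tilde F_1$ with error $O(q^{(d_1-m+k)/2})$, contributing the factor $\prod_{P \mid L_R L_Z}(1+q^{-\deg P})^{-1} = (1+q^{-1})^{-m}$ from the $m$ distinct linear factors of $L_R L_Z$. The remaining sum over $\tilde F_2$ reduces to estimating $\sum_{\deg F = d_2-k} b(F)$ with $b(F) = \mu^2(F) \prod_{P \mid F}(1+|P|^{-1})^{-1}$ restricted to $F$ having no $\mathbb F_q$-root, whose Dirichlet series factors as
\[
G(s) = \frac{\zeta_q(s)}{\zeta_q(2s)} H(s) \left(1 + \frac{1}{q^{s-1}(q+1)}\right)^{-q},
\]
since all $q$ linear polynomials $X - x$ are now excluded from the Euler product. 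Applying the Tauberian theorem of \cite{Ro} exactly as in Proposition \ref{nonzerovalues} gives
\[
\sum_{\deg F = d_2-k} b(F) = \frac{K}{\zeta_q(2)}\left(\frac{q+1}{q+2}\right)^q q^{d_2-k} + O\bigl(q^{\varepsilon(d_2-k+q)}\bigr),
\]
the $q^{\varepsilon q}$ factor arising from the bound $(1-q^{-\varepsilon})^{-q}$ on the relevant contour.

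Assembling these pieces and using the identity $(1 - q^{-2})^{q-m}(1+q^{-1})^m = (1-q^{-1})^{q-m}(1+q^{-1})^q$ simplifies the leading term to $\binom{m}{k} K q^{d_1+d_2+q-m}/(\zeta_q(2)^2 (q-1)^{q-m}(q+2)^q)$, which matches the asserted form. The main obstacle is bookkeeping of the two error sources through to the stated shape: the Tauberian error multiplied by the leading coefficient $\sim q^{d_1+k-q}$, divided by the main term $\sim q^{d_1+d_2-q}$, gives the first claimed error $q^{-(1-\varepsilon)(d_2-k)+\varepsilon q}$; and the Lemma \ref{lemma-T} error $O(q^{(d_1-m+k)/2})$ summed against $|\{\tilde F_2\}| \lesssim q^{d_2-k}$ produces an absolute error $O(q^{(d_1+2d_2-m-k)/2})$, which relative to the main term is $O(q^{q - (d_1+m+k)/2}) \subseteq O(q^{q - (d_1+k-m)/2})$, matching the second error term in the corollary.
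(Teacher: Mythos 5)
Your argument is correct, and its first step is exactly the paper's: the $k$ roots of $F_2$ must lie among the $m$ prescribed zeros, giving the factor $\binom{m}{k}$, and stripping off the linear factors leaves a residual polynomial of bidegree $(d_1-m+k,\,d_2-k)$ that is nonvanishing on all of $\F_q$. The difference is in how that residual count is handled. The paper simply sums over the $(q-1)^m$ possible nonzero values of $G=\tilde F_1\tilde F_2^2\in\mathcal F_{(d_1-m+k,d_2-k)}$ at the $m$ former zero points and quotes Proposition \ref{nonzerovalues} with values prescribed at all $q$ points of $\F_q$, obtaining the main term and the error $O\left(q^{-(1-\varepsilon)(d_2-k)+\varepsilon q}+q^{-(d_1-m+k)/2+q}\right)$ in one stroke. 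You instead re-run the proof of that proposition in situ: Lemma \ref{lemma-T} with modulus $U=L_RL_Z\tilde F_2$ at the $q-m$ nonzero points, followed by the Tauberian estimate for the sum over root-free $\tilde F_2$, with all $q$ linear Euler factors removed from the Dirichlet series (hence the exponent $-q$ and the factor $\left(\frac{q+1}{q+2}\right)^q$ in the residue). Your main-term algebra checks out, and your error bookkeeping --- $q^{-(1-\varepsilon)(d_2-k)+\varepsilon q}$ from the Tauberian step, and $q^{q-(d_1+m+k)/2}\subseteq O\left(q^{-(d_1+k-m)/2+q}\right)$ from summing the Lemma \ref{lemma-T} error over $\tilde F_2$ --- is consistent with, indeed marginally sharper than, the stated bound; the common factor $\binom{m}{k}$ multiplies main and error terms alike, so relative errors are unaffected. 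The only cost of your route is redoing work the paper has already packaged in Proposition \ref{nonzerovalues}; what it buys is that the nonvanishing at the $m$ zero points is imposed structurally (coprimality with $L_RL_Z$ and the restricted Euler product) rather than by summing over the $(q-1)^m$ value patterns at those points.
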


\begin{proof}

The $k$ roots of $F_2$ must be among the
$x_{\ell+1}, \dots, x_{\ell+m}$, and the remaining elements of $x_{\ell+1}, \dots x_{\ell+m}$ must be roots of $F_1$. Thus
we can write
\[
F(x)= \prod_{j=1}^k(x-x_{i_j})^2 \prod_{v=1}^{m-k}(x-x_{i_v} )G(x),
\]
with $G(x) \in
\mathcal{F}_{(d_1-m+k, d_2-k)}$, $G(x_i) \neq 0$ for $\ell+1\leq i \leq \ell+m$ and $G(x_i)=a_i  \prod_{j=1}^k(x_i-x_{i_j})^{-2} \prod_{v=1}^{m-k}(x_i-x_{i_v} )^{-1}$ for $1 \leq i \leq \ell$. Thus,
\begin{eqnarray*}
&& {\left|\left\{ F \in \mathcal{F}_{(d_1,d_2)}^k : F(x_i) = a_i, 1
\leq i \leq \ell+m \right\}\right|}
\\
&&= \sum_{{\{i_1, \dots, i_k\}}\atop{ \subset \{\ell+1, \dots, \ell+m\}}}
\sum_{{(\alpha_1, \dots, \alpha_m)}\atop{\in (\F_q^*)^m}} \left| \left\{ G \in
\mathcal{F}_{(d_1-m+k, d_2-k)}:   \; G(x_i)=a_i  \prod_{j=1}^k(x_i-x_{i_j})^{-2} \prod_{v=1}^{m-k}(x_i-x_{i_v} )^{-1},   \right.\right.\\
&&\left. \left. \hspace{2.1in} 1 \leq i \leq \ell,    \; G(x_{\ell+i})=\alpha_i, 1\leq i \leq m\right\} \right|.
\end{eqnarray*}
By Proposition \ref{nonzerovalues} this equals
\begin{eqnarray*}
&&\binom{m}{k}
\frac{Kq^{d_1+d_2}}{\zeta_q(2)^2 }\left( \frac{1}{q+2}\right)^m\left(
\frac{q}{(q+2)(q-1)}\right)^{\ell} \left(1+
O\left(q^{-(1-\varepsilon)(d_2-k)+\varepsilon q}+ q^{-(d_1+k-m)/2+q }\right)\right).
\end{eqnarray*}
\end{proof}

\begin{corollary} \label{formoments} Let $x_1, \dots, x_q$ be the elements of $\F_q$, and let $a_1, \dots, a_q \in \F_q$
such that $m$ of the $a_i$ are $0$. Then for $\varepsilon >0$
\begin{eqnarray*}
\left|\left\{ F \in \mathcal{F}_{(d_1,d_2)} : F(x_i) = a_i,
1 \leq i \leq q \right\}\right| &=&\frac{K q^{d_1+d_2}}{\zeta_q(2)^2}
 \left(\frac{2}{q+2}\right)^m\left(\frac{q}{(q+2)(q-1)}\right)^{q-m} \\
 &&\times\left(1 +   O \left( q^{
-(1-\varepsilon) (d_2-m) +\varepsilon q}+
q^{ -(d_1-m)/2+q }\right)\right)
\end{eqnarray*}
and
\begin{eqnarray*}\nonumber
\frac{\left|\left\{ F \in \mathcal{F}_{(d_1,d_2)} \;:\; F(x_i) = a_i, \;
1 \leq i \leq q \right\}\right|}{\left|\mathcal{F}_{(d_1,d_2)} \right|}
&=&  \left( \frac{2}{q+2} \right)^m \left( \frac{q}{(q+2)(q-1)}
\right)^{q-m}\\
&\times&\left(1 +   O \left( q^{
-(1-\varepsilon) (d_2-m) +\varepsilon q}+
q^{ -(d_1-m)/2+q }\right)\right).
\end{eqnarray*}
\end{corollary}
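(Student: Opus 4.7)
The plan is to derive Corollary \ref{formoments} by summing the count from Corollary \ref{whatever} over all possible values of $k$, where $k$ denotes the number of roots of $F_2$ lying among the $m$ points where $a_i = 0$. Since $F = F_1 F_2^2$ is square-free in each factor, any point $x_i$ with $F(x_i)=0$ is a root of exactly one of $F_1$ or $F_2$; thus the sets $\mathcal{F}_{(d_1,d_2)}^k$ for $0 \le k \le m$ partition the polynomials of interest according to how the $m$ prescribed zeros distribute between $F_1$ and $F_2$. (For $k>m$, no $F \in \mathcal{F}_{(d_1,d_2)}^k$ can realize the prescribed values, since the extra roots of $F_2$ would force additional zero values.) So I would write
\begin{equation*}
\bigl|\{ F \in \mathcal{F}_{(d_1,d_2)} : F(x_i)=a_i, 1\le i\le q\}\bigr|
= \sum_{k=0}^{m} \bigl|\{ F \in \mathcal{F}_{(d_1,d_2)}^k : F(x_i)=a_i, 1\le i\le q\}\bigr|
\end{equation*}
and substitute the asymptotic from Corollary \ref{whatever}.

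The main term comes out cleanly: factoring $\frac{Kq^{d_1+d_2}}{\zeta_q(2)^2}\bigl(\frac{1}{q+2}\bigr)^m\bigl(\frac{q}{(q+2)(q-1)}\bigr)^{q-m}$ out of the sum leaves $\sum_{k=0}^m \binom{m}{k} = 2^m$, which combines with $(1/(q+2))^m$ to produce the claimed $(2/(q+2))^m$ factor. For the error term, I would bound $\binom{m}{k}\le 2^m$ uniformly and note that the two pieces of the error in Corollary \ref{whatever} are monotone in opposite directions in $k$: $q^{-(1-\varepsilon)(d_2-k)}$ is maximized at $k=m$, giving $q^{-(1-\varepsilon)(d_2-m)}$, while $q^{-(d_1+k-m)/2}$ is maximized at $k=0$, giving $q^{-(d_1-m)/2}$. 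So after absorbing $2^m$ into the $q^{\varepsilon q}$ and $q^q$ terms that are already present (since $2^m \le 2^q \ll q^q$), the sum of the error terms is $O\bigl(q^{-(1-\varepsilon)(d_2-m)+\varepsilon q}+q^{-(d_1-m)/2+q}\bigr)$, matching the stated bound.

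For the second formula, I would divide the first by the asymptotic for $|\mathcal{F}_{(d_1,d_2)}|$ supplied by equation (\ref{calculF}) in Proposition \ref{nonzerovalues}. The leading constants $Kq^{d_1+d_2}/\zeta_q(2)^2$ cancel, leaving exactly the probability factor $\bigl(\frac{2}{q+2}\bigr)^m\bigl(\frac{q}{(q+2)(q-1)}\bigr)^{q-m}$. The relative error is the sum of the two error terms (the one from the numerator and the $O(q^{-(1-\varepsilon)d_2}+q^{-d_1/2})$ from the denominator), and the denominator's error is dominated by the numerator's since $d_2 \ge d_2 - m$ and $d_1/2 \ge (d_1-m)/2 - q$.

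I expect no real obstacle: the argument is a bookkeeping step combining the partition of $\mathcal{F}_{(d_1,d_2)}$ by the number of roots of $F_2$ with the binomial identity $\sum_k \binom{m}{k}=2^m$. The only delicate point is verifying that the two error contributions in $k$ admit a uniform bound by their endpoint values, which follows since each is a monotone function of $k$.
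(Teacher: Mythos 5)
Your proposal is correct and follows essentially the same route as the paper: the paper's proof likewise sums the count of Corollary \ref{whatever} over $k=0,\dots,m$, uses $\sum_{k}\binom{m}{k}=2^m$ to convert $(1/(q+2))^m$ into $(2/(q+2))^m$, bounds the two error pieces by their extreme values in $k$, and obtains the ratio form by dividing by \eqref{calculF}. No substantive difference to report.
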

\begin{proof}
We sum over $k$ in  Corollary \ref{whatever}.
\begin{eqnarray*}
&&{\left|\left\{ F \in \mathcal{F}_{(d_1,d_2)} \;:\; F(x_i) = a_i,
\; 1
\leq i \leq q \right\}\right|} \nonumber \\
&& =
\frac{Kq^{d_1+d_2}}{\zeta_q(2)^2 } \left( \frac{1}{q+2}\right)^m\left(
\frac{q}{(q+2)(q-1)}\right)^{q-m}
\sum_{k=0}^m\binom{m}{k}
 \left(1+ O\left(q^{-(1-\varepsilon)(d_2-k)+\varepsilon q}+ q^{-(d_1+k-m)/2+q }\right)\right)
\\
&& =\frac{K q^{d_1+d_2}}{\zeta_q(2)^2}\left(\frac{2}{q+2}\right)^m\left(\frac{q}{(q+2)(q-1)}\right)^{q-m} \left(1 +   O \left( q^{
-(1-\varepsilon) (d_2-m) +\varepsilon q}+q^{ -(d_1-m)/2+q }
\right)\right).
\end{eqnarray*}
For the second identity, we divide by \eqref{calculF}.
\end{proof}

To complete the proof of Theorem \ref{countingF} we note that if $\varepsilon \in \{1, \omega, \omega^2\}$, there are $\frac{q-1}{3}$ elements $\alpha \in \mathbb F_q$ such that $\chi_3(\alpha)=\varepsilon$.

 \section{Moments} \label{sectionmoments}

In this section,  we compute the moments of $\Tr (\Frob_C|_{H^1_{\chi_3}})/\sqrt{q+1}$ and prove Theorem \ref{momentsthm}. Our proof follows the same steps as the proof of the equivalent result (for the case of hyperelliptic curves) in \cite{KR}.

\begin{proof}[Proof of Theorem \ref{momentsthm}.]
Working as in Section \ref{geometry}, we first rewrite
\begin{eqnarray*} M_{j,k}(q, (d_1, d_2))
&=&\frac{1}{\left|\widehat{\mathcal{F}}_{[d_1, d_2]}\right|} \sum_{F \in
\widehat{\mathcal{F}}_{[d_1, d_2]}} \left(
\frac{\widehat{S}_3(F)}{\sqrt{q+1}} \right)^j \left(
\frac{\overline{\widehat{S}_3(F)}}{\sqrt{q+1}} \right)^k.
\end{eqnarray*}
Since every $F \in \widehat{\mathcal{F}}_{[d_1, d_2]}$ can be
written uniquely as $F= \alpha G$ for some $\alpha \in \F_q^*$ and $G \in
{\mathcal{F}}_{[d_1, d_2]}$, and
\[
\widehat{S}_3(F) = \chi_3(\alpha) \sum_{x \in \mathbb{P}^1(\F_q)}
\chi_3(G(x_i)) = \chi_3(\alpha) \widehat{S}_3(G),
\]
we have
\begin{eqnarray*}
   M_{j,k}(q, (d_1, d_2))
&=&  \frac{1}{\left|\widehat{\mathcal{F}}_{[d_1, d_2]}\right|} \sum_{\alpha \in
\F_q^*} \chi_3(\alpha)^{j-k} \sum_{F \in {\mathcal{F}}_{[d_1, d_2]}}
\left( \frac{\widehat{S}_3(F)}{\sqrt{q+1}} \right)^j\left(
\frac{\overline{\widehat{S}_3(F)}}{\sqrt{q+1}} \right)^k \\
&=& \frac{1}{\left|{\mathcal{F}}_{[d_1, d_2]}\right|}  \sum_{F \in
{\mathcal{F}}_{[d_1, d_2]}} \left(
\frac{\widehat{S}_3(F)}{\sqrt{q+1}} \right)^j \left(
\frac{\overline{\widehat{S}_3(F)}}{\sqrt{q+1}} \right)^k\end{eqnarray*}
when $j\equiv k \,(\mathrm{mod}\, 3)$
and  $M_{j,k}(q, (d_1, d_2)) = 0$ when $j \not\equiv k \,(\mathrm{mod}\,
3)$.

Assume from now on that $j \equiv k \,(\mathrm{mod}\, 3)$. Then,
\begin{eqnarray*}
   M_{j,k}(q, (d_1, d_2))
&=& \frac{(q+1)^{-(j+k)/2}}{\left|{\mathcal{F}}_{[d_1, d_2]}\right|}  \sum_{F \in
{\mathcal{F}}_{[d_1, d_2]}} \!\!\left( \sum_{x \in \mathbb{P}^1(\F_q)}
\chi_3(F(x)) \right)^{j} \!\!\left( \sum_{x \in \mathbb{P}^1(\F_q)}
\overline{\chi_3(F(x))} \right)^{k} \\
&=&\frac{(q+1)^{-(j+k)/2}}{\left|{\mathcal{F}}_{[d_1, d_2]}\right|} \sum_{{x_1, \dots,
x_j \in \mathbb{P}^1(\F_q)}\atop{y_1, \dots,
y_k \in \mathbb{P}^1(\F_q)}} \sum_{F \in {\mathcal{F}}_{[d_1, d_2]}}
\chi_3(F(x_1) \dots F(x_j))\overline{\chi_{3}(F(y_1) \dots F(y_k))}
\end{eqnarray*}
\begin{equation} \label{explodedmoments}
= (q+1)^{-(j+k)/2} \sum_{\ell = 1}^j\sum_{m = 1}^k d(j,k,\ell,m)\!\!\!
\sum_{({\bf x}, {\bf y}, {\bf b}, {\bf c}) \in P_{j,k,\ell,m}}
\frac{1}{\left|{\mathcal{F}}_{[d_1, d_2]}\right|} \sum_{F \in
{\mathcal{F}}_{[d_1, d_2]}} \prod_{i=1}^\ell \chi_3(F(x_i))^{b_i}\prod_{i=1}^m \overline{\chi_3(F(y_i))^{c_i}}
\end{equation}
where \begin{eqnarray*} P_{j,k,\ell,m} &=& \left\{(\mathbf x,\mathbf y, \mathbf
b, \mathbf c)\;:\; \mathbf x= (x_1, \ldots, x_{\ell})  \in \mathbb P^1(\mathbb
F_q)^\ell, x_i \textrm{'s distinct}, \mathbf y = (y_1, \ldots, y_m) \in \mathbb P^1(\mathbb
F_q)^m  \right.\\
&&\left . y_i \textrm{'s distinct}, \mathbf b = (b_1, \ldots, b_\ell) \in \mathbb Z_{>0}^\ell, \mathbf c = (c_1, \ldots, c_m) \in \mathbb Z_{>0}^m,\sum_{i=1}^\ell
b_i =j, \sum_{i=1}^m
c_i =k \right\},
\end{eqnarray*} and $d(j,k,\ell,m)$ is a certain combinatorial factor. We
do not need exact formulas for the $d(j,k,\ell,m)$, but note that
\begin{eqnarray*} 
\sum_{\ell=1}^j \sum_{m=1}^k d(j,k,\ell,m)  \sum_{({\mathbf x}, {\mathbf y},{\mathbf  {b}}, {\mathbf c}) \in
P_{j,k,\ell,m} }  1 =   (q+1)^{j+k} . \end{eqnarray*}

We now fix a vector $(\mathbf x, \mathbf y, \mathbf b,\mathbf c) \in P_{j, k,\ell,m},$ and we
compute
\begin{eqnarray*}
\frac{1}{\left|{\mathcal{F}}_{[d_1, d_2]}\right|} \sum_{F \in
{\mathcal{F}}_{[d_1, d_2]}} \prod_{i=1}^\ell \chi_3(F(x_i))^{b_i} \prod_{i=1}^m \overline{\chi_3(F(y_i))^{c_i}}.
\end{eqnarray*}
Suppose that  $\{x_1, \dots, x_\ell,y_1, \dots, y_m\}=\{z_1, \dots, z_h\}$, in other words, that $\mathbf{x}$ and $\mathbf{y}$ have $\ell+m-h$ coordinates in common. To simplify the notation,  we will denote by $f_i$ the corresponding exponent for $z_i$ which is equal to some $b_i$, $c_i$ or $b_i-c_i$ depending on whether the value $z_i$ appears in $\{x_1, \dots, x_\ell\}$, $\{y_1, \dots, y_m\}$, or in both sets. We also adopt the convention that $f_i$ could be equal to $0$, in which case $\chi_{3}(F(z_i))^{f_i}=1$ if $F(z_i) \neq 0$ and $\chi_{3}(F(z_i))^{f_i}=0$  if $F(z_i)=0$. With this notation, we want to compute
\begin{eqnarray} \label{tocompute}
\frac{1}{\left|{\mathcal{F}}_{[d_1, d_2]}\right|} \sum_{F \in
{\mathcal{F}}_{[d_1, d_2]}} \prod_{i=1}^h \chi_3(F(z_i))^{f_i}.
\end{eqnarray}

There are two cases.

{\bf Case 1:} Suppose that $z_t$ is the point at infinity, for some
$1\leq t \leq h.$ Then, only polynomials in ${\mathcal
F}_{(d_1,d_2)}$ have a nonzero contribution to (\ref{tocompute}) and
$\chi_3(F(z_t))^{f_t} = 1$. This gives that
\begin{eqnarray*} 
\sum_{F \in
{\mathcal{F}}_{[d_1, d_2]}} \prod_{i=1}^h \chi_3(F(z_i))^{f_i}&=& \sum_{{a_i \in \F_q^*}\atop{1\leq i \leq
h, i \neq t}} \prod_{{i=1}\atop{i \neq t}}^h \chi_3(a_i)^{f_i}
\sum_{{F \in {\mathcal{F}}_{(d_1, d_2)}}\atop{F(z_i)=a_i, 1 \leq i
\leq h, i \neq t}} 1 .
\end{eqnarray*}
Suppose that all $f_i$ for $1 \leq i \leq h$ and $i \neq t$ are
multiples of 3. Then, using Proposition \ref{nonzerovalues}, we get
\begin{eqnarray} \label{bi0mod3}
\sum_{F \in {\mathcal{F}}_{(d_1, d_2)}} \prod_{i=1}^h
\chi_3(F(z_i))^{f_i}&=& \frac{K q^{d_1+d_2}}{\zeta_q(2)^2} \left(
\frac{q}{q+2} \right)^{h-1}  \left( 1 + O \left(
q^{-(1-\varepsilon)d_2 + \varepsilon (h-1)}+
q^{-d_1/2+h-1} \right) \right).
\end{eqnarray}

Suppose that there exists a $f_i$ with $1 \leq i \leq h$ and $i
\neq t$ such that $f_i$ is not a multiple of 3. Without loss of
generality, suppose that $f_1 \equiv 1 \,(\mathrm{mod}\, 3)$ and $t
\neq 1$. Then, using again Proposition \ref{nonzerovalues}, we get
\begin{eqnarray} \nonumber
\sum_{F \in {\mathcal{F}}_{[d_1, d_2]}} \prod_{i=1}^h
\chi_3(F(z_i))^{f_i} &=& \sum_{{{a_i \in \F_q^*}\atop{1\leq i \leq
h, i \neq t}}\atop{\chi_3(a_1)=1}}  \prod_{{i=2}\atop{i \neq
t}}^h \chi_3(a_i)^{f_i} \sum_{{F \in {\mathcal{F}}_{(d_1,
d_2)}}\atop{F(z_i)=a_i, 1 \leq i \leq h, i \neq t}} 1  \\
\nonumber&& \hspace{-1.6in} + \omega \sum_{{{a_i \in \F_q^*}\atop{1\leq i \leq
h, i \neq t}}\atop{\chi_3(a_1)=\omega}}  \prod_{{i=2}\atop{i \neq
t}}^h \chi_3(a_i)^{f_i} \sum_{{F \in {\mathcal{F}}_{(d_1,
d_2)}}\atop{F(z_i)=a_i, 1 \leq i \leq h, i \neq t}} 1   + \omega^2 \sum_{{{a_i \in \F_q^*}\atop{1\leq i \leq
h, i \neq t}}\atop{\chi_3(a_1)=\omega^2}}  \prod_{{i=2}\atop{i \neq
t}}^h \chi_3(a_i)^{f_i} \sum_{{F \in {\mathcal{F}}_{(d_1,
d_2)}}\atop{F(z_i)=a_i, 1 \leq i \leq h, i \neq t}} 1  \\
\label{binot0mod3} &&\hspace{-1.6in}= \frac{K
q^{d_1+d_2}}{\zeta_q(2)^2} \left(
\frac{q}{q+2} \right)^{h-1} \left( 0 +  O \left(
q^{-(1-\varepsilon)d_2 + \varepsilon (h-1)}+
q^{-d_1/2+h-1} \right) \right).
\end{eqnarray}
We remark that $j \equiv k \,(\mathrm{mod}\, 3)$ and $f_i \equiv 0
\,(\mathrm{mod}\, 3)$ for $1 \leq i \leq h$, $i \neq t$ is
equivalent to $f_i \equiv 0 \,(\mathrm{mod}\, 3)$ for $1 \leq i \leq
h$.

Using (\ref{bi0mod3}) and (\ref{binot0mod3}), and
dividing by
\[ \left|\mathcal{F}_{[d_1, d_2]}\right| = \left(\frac{q+2}{q}\right)\frac{K q^{d_1+d_2}}{\zeta_q(2)^2}  \left(1+
O \left( q^{-(1-\varepsilon) d_2}+q^{-d_1/2}
\right)\right),\] we get that
\begin{eqnarray} \nonumber
&&\frac{1}{\left|{\mathcal{F}}_{[d_1, d_2]}\right|} \sum_{F \in
{\mathcal{F}}_{[d_1, d_2]}} \prod_{i=1}^h \chi_3(F(z_i))^{f_i}
\\\label{case1} &&= \left( \frac{q}{q+2} \right)^h
\left( \delta({\bf f}, h)+  O \left(
q^{-(1-\varepsilon)d_2 + \varepsilon (h-1)}+
q^{-d_1/2+h-1} \right)
\right),
\end{eqnarray}
where $\delta({\bf f}, h)=1$ if $f_i \equiv 0 \,(\mathrm{mod}\,
3)$ for $1 \leq i \leq h$ and 0 otherwise.

{\bf Case 2:} Suppose that $z_1, \ldots, z_h \in \mathbb F_q$.
Then any $F \in {\mathcal F}_{[d_1,d_2]}$ can contribute to
(\ref{tocompute}). Repeating the reasoning above, we get
\begin{eqnarray*}
\sum_{F \in {\mathcal{F}}_{[d_1, d_2]}} \prod_{i=1}^h
\chi_3(F(z_i))^{f_i} &=& \sum_{{a_i \in \F_q^*}\atop{1\leq i \leq
h}} \prod_{{i=1}}^h \chi_3(a_i)^{f_i} \sum_{{F \in
{\mathcal{F}}_{[d_1, d_2]}}\atop{F(z_i)=a_i, 1 \leq i \leq h}} 1.
\end{eqnarray*}

Suppose that all $f_i$ for $1 \leq i \leq h$ are multiples of 3.
Then, using Proposition \ref{nonzerovalues}, we get
\begin{eqnarray*}
\sum_{F \in {\mathcal{F}}_{[d_1, d_2]}} \prod_{i=1}^h
\chi_3(F(z_i))^{f_i} &=& \frac{K q^{d_1+d_2}}{\zeta_q(2)^2} \left(
\frac{q}{q+2} \right)^{h-1} \left( 1 + O \left(
q^{-(1-\varepsilon)d_2 + \varepsilon h}+
q^{-d_1/2+h} \right)  \right).
\end{eqnarray*}
If not all $f_i$ are multiples of 3, reasoning as in Case 1, we get
\[
\sum_{F \in {\mathcal{F}}_{[d_1, d_2]}} \prod_{i=1}^h
\chi_3(F(z_i))^{f_i} = \frac{K q^{d_1+d_2}}{\zeta_q(2)^2} \left(
\frac{q}{q+2} \right)^{h-1} \left( 0 + O \left(
q^{-(1-\varepsilon)d_2 + \varepsilon h }+
q^{-d_1/2+h} \right)  \right)
\]
and
\begin{eqnarray}\nonumber
&&\frac{1}{\left|{\mathcal{F}}_{[d_1, d_2]}\right|} \sum_{F \in
{\mathcal{F}}_{[d_1, d_2]}} \prod_{i=1}^h \chi_3(F(x_i))^{f_i}\\  \label{case2} &&
= \left( \frac{q}{q+2} \right)^h
\left( \delta({\bf f}, h) + O \left(
q^{-(1-\varepsilon)d_2 + \varepsilon h }+
q^{-d_1/2+h} \right)
\right).
\end{eqnarray}

We then have the same result for Case 1 and Case 2, and replacing
(\ref{case1}) or (\ref{case2}) in (\ref{explodedmoments}), we have
\begin{eqnarray} \nonumber
  M_{j,k}(q, (d_1, d_2)) &=& \left((q+1)^{-(j+k)/2} \sum_{\ell = 1}^j \sum_{m=1}^k d(j,k,\ell,m)
\sum_{{({\bf x}, {\bf y}, {\bf b}, {\bf c}) \in P_{j,k,\ell,m}}\atop{3 \mid f_i}} \left(
\frac{q}{q+2} \right)^h \right) \\  \label{ourmoments}  &\times& \left( 1 + O \left(
q^{-(1-\varepsilon)d_2 + \varepsilon (j+k) }+
q^{-d_1/2+(j+k)} \right)\right),
\end{eqnarray}
where $h$ and $f_i$ are understood as before.

We now compute the corresponding moment of the normalized sum of the  random
variables $X_1, \dots, X_{q+1}$, i.e.
 \begin{eqnarray*}
 &&\mathbb E
\left( \left( \frac{1}{\sqrt{q+1}} \sum_{i=1}^{q+1} X_i \right)^j\left( \frac{1}{\sqrt{q+1}} \sum_{i=1}^{q+1} \overline{X_i} \right)^k
\right)
\\
=&& \frac{1}{(q+1)^{(j+k)/2}} \sum_{\ell=1}^j \sum_{m=1}^k d(j,k,\ell,m)
\sum_{({\mathbf u},{\mathbf v}, {\mathbf  {b},{\mathbf c}}) \in A_{j,k,\ell,m} }\mathbb E \left(
X_{u_1}^{b_1} \cdots X_{u_\ell}^{b_\ell} \overline{X_{v_1}}^{c_1} \cdots \overline{X_{v_m}}^{c_m}  \right)
\end{eqnarray*}
where
\begin{eqnarray*} A_{j,k,\ell,m} &=& \left\{(\mathbf u,\mathbf v, \mathbf
b, \mathbf c)\;:\; \mathbf u= (u_1, \ldots, u_{\ell}),  1\leq u_i\leq q+1, u_i \textrm{'s distinct}, \mathbf v = (v_1, \ldots, v_m), \right.\\
&& 1\leq v_i\leq q+1, v_i \textrm{'s distinct}, \mathbf b = (b_1, \ldots, b_\ell) \in \mathbb Z_{>0}^\ell, \mathbf c = (c_1, \ldots, c_m) \in \mathbb Z_{>0}^m,\\
&&\left . \sum_{i=1}^\ell
b_i =j, \sum_{i=1}^m
c_i =k \right\}.
\end{eqnarray*}

Since
\begin{equation*}\mathbb E(X_i^b\overline{X_i}^c) = \begin{cases}
0 & b \not \equiv c \, (\mathrm{mod}\, 3) ,\\
\displaystyle \frac{1}{1+2q^{-1}} & b \equiv c \, (\mathrm{mod}\, 3)
\end{cases}\end{equation*}
and $X_1, \ldots, X_{q+1}$ are independent, we get
\begin{eqnarray} \nonumber
&&\mathbb E
\left( \left( \frac{1}{\sqrt{q+1}} \sum_{i=1}^{q+1} X_i \right)^j\left( \frac{1}{\sqrt{q+1}} \sum_{i=1}^{q+1} \overline{X_i} \right)^k
\right)
\\ \label{RVmoments}
=&& \frac{1}{(q+1)^{(j+k)/2}} \sum_{\ell=1}^j \sum_{m=1}^k d(j,k,\ell,m)
\sum_{({\mathbf u},{\mathbf v}, {\mathbf  {b},{\mathbf c}}) \in A_{j,k,\ell,m} \atop{3 \mid f_i}} \left(
\frac{q}{q+2} \right)^h.
\end{eqnarray}

Since the number of terms in the sums over the sets $P_{j, k,\ell,m}$
such that $3 \mid f_i$ and  $A_{j, k, \ell, m}$ such that $3 \mid f_i$ are
the same, comparing (\ref{ourmoments}) and (\ref{RVmoments}), we
have
\begin{eqnarray*}
  M_{j,k}(q,(d_1,d_2)) &=& \mathbb E
\left( \left( \frac{1}{\sqrt{q+1}} \sum_{i=1}^{q+1} X_i \right)^j\left( \frac{1}{\sqrt{q+1}} \sum_{i=1}^{q+1} \overline{X_i} \right)^k
\right)\\
&\times &\left( 1 + O \left(
q^{-(1-\varepsilon)d_2 + \varepsilon (j+k) }+
q^{-d_1/2+j+k} \right)\right).
\end{eqnarray*}

\end{proof}

\begin{proof}[Proof of Corollary \ref{momentscor}.]
First we study the distribution of the normalized sum of the i.i.d. random variables $(X_1+\ldots+ X_{q+1})/\sqrt{q+1}$ as $q \rightarrow \infty$. Since the $X_j$'s take complex values, we first write each of them as $X_j=A_j+ \sqrt{-1}B_j$ and identify it with the $\mathbb R^2$ vector $\left( \begin{array}{c}A_j \\B_j \end{array}\right).$

Since $\mathbb E (|X_j|^2) =(1+2q^{-1})^{-1}$ and $\mathbb E(X_j)= \mathbb E(X_j^2)=0$, we have
\[\mathbb E(A_j)=\mathbb E(B_j)= \mathbb E(A_j B_j)=0\] and \[\mathbb E(A_j^2) = \mathbb E(B_j^2) = \frac{1}{2} \mathbb E(|X_j|^2).\]
The Triangular Central Limit Theorem holds for two-dimensional vector valued random variables as long as the covariance matrix is invertible. Since for us the covariance matrix not only is invertible, but also is diagonal with nonzero diagonal entries, we obtain that
\[ \frac{1}{q+1} \sum_{j=1}^{q+1}\left( \begin{array}{c}A_j \\B_j \end{array}\right) \rightarrow \mathbf N_{\mathbb R^2} \left(0, \left( \begin{array}{cc}1/2&0\\0 & 1/2 \end{array}\right)\right), \]
the two-dimensional Gaussian with mean $0$ and covariance matrix $\left( \begin{array}{cc}1/2&0\\0 & 1/2 \end{array}\right),$ whose probability density is given by
\[ \frac{1}{\pi} e^{-x^2 - y^2} dx dy.\]

Note that this measure is invariant under multiplication by $-1$. Going back to the complex valued random variables, we obtain that, as $q$ approaches infinity, the normalized sum  $(X_1+\ldots + X_{q+1})/\sqrt{q+1}$ approaches the complex Gaussian with mean zero and variance one. The probability measure of this Gaussian is given by

\[\frac{1}{\pi} e^{-|z|^2} dz.\]

Theorem \ref{momentsthm} tells us that, as $q, d_1, d_2 \rightarrow \infty,$ the moments $M_{j,k}(q,(d_1,d_2))$ approach the moments of the complex Gaussian for all $j$ and $k.$
Since the Gaussian is invariant under the change of sign, the limiting value distribution of $\Tr (\Frob_C|_{H^1_{\chi_3}})/\sqrt{q+1}$ is the complex Gaussian distribution with mean $0$ and variance $1.$
\end{proof}

\section{Hyperelliptic curves: the geometric point of view}
\label{KRrevisited}

We now revisit the results of Kurlberg and Rudnick \cite{KR} for hyperelliptic curves from the geometric point of view. The results of this section are similar to the results of Section \ref{geometry} for the case of hyperelliptic curves.

\begin{lemma} \label{KRlemma3geom}
The number of square-free polynomials of degree $d$ is
\[\left|\widehat{\mathcal F}_d\right|=\left \{ \begin{array}{ll} {q^{d+1}}{(1-q^{-1})^2} & d\geq 2,\\\\q^{d+1}(1-q^{-1}) & d=0,1.\end{array}\right.\]
\end{lemma}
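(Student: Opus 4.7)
The plan is to reduce the count of not-necessarily-monic square-free polynomials to the count of monic square-free polynomials, and then invoke Lemma \ref{KRlemma3}.

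Any polynomial $F \in \F_q[X]$ of degree $d$ can be written uniquely as $F = \alpha F^{\ast}$, where $\alpha \in \F_q^{\ast}$ is the leading coefficient of $F$ and $F^{\ast}$ is monic of degree $d$. Moreover, $F$ is square-free if and only if $F^{\ast}$ is square-free (multiplication by a nonzero scalar preserves the factorization into distinct irreducibles). Hence the map $(\alpha, F^{\ast}) \mapsto \alpha F^{\ast}$ gives a bijection $\F_q^{\ast} \times \mathcal{F}_d \longrightarrow \widehat{\mathcal{F}}_d$, so
\[
\bigl|\widehat{\mathcal{F}}_d\bigr| = (q-1)\,\bigl|\mathcal{F}_d\bigr|.
\]

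Applying Lemma \ref{KRlemma3}, for $d \geq 2$ we have $|\mathcal F_d| = q^d(1-q^{-1})$, which yields
\[
\bigl|\widehat{\mathcal F}_d\bigr| = (q-1)\cdot q^d (1-q^{-1}) = q^{d+1}(1-q^{-1})^2,
\]
while for $d=0,1$ we have $|\mathcal F_d|=q^d$, giving $|\widehat{\mathcal F}_d| = (q-1)q^d = q^{d+1}(1-q^{-1})$. Since the argument is a one-line reduction, there is no serious obstacle — the only thing to observe is that square-freeness is insensitive to scaling by $\F_q^{\ast}$, so Lemma \ref{KRlemma3} carries over verbatim after pulling out the leading coefficient.
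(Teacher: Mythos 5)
Your proof is correct and is essentially the paper's own argument: the paper likewise observes $|\widehat{\mathcal F}_d|=(q-1)|\mathcal F_d|$ and cites Lemma \ref{KRlemma3}, while you merely spell out the bijection $(\alpha,F^{\ast})\mapsto\alpha F^{\ast}$ and the invariance of square-freeness under scaling. Nothing to fix.
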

\begin{proof}
This follows from Lemma \ref{KRlemma3} since $\left|\widehat{\mathcal F}_d\right|=(q-1)\left|\mathcal F_d\right|$.
\end{proof}

\begin{proof}[Proof of Theorem \ref{KRrevisitedthm}.]

Consider the hyperelliptic curve with affine model
\[ C: \quad Y^2 = F(X),\]
where $F \in \widehat{\mathcal{F}}_d$. It has genus $g$ if and only if
$d$ is either $2g+1$ or $2g+2.$ In terms of the polynomial $F$, the trace
of the Frobenius is equal to
\[-\widehat{S}_2(F) = -\sum_{x \in \mathbb P^1(\mathbb F_q)} \chi_2(F(x)),\]
where the value of $F$ at the point at infinity is given by the
value of $X^{2g+2}F(1/X)$ at zero. By running over all $F \in \widehat{\mathcal F}_{2g+1} \cup \widehat{\mathcal F}_{2g+2} $, one counts each point in the moduli space $\mathcal H_g$ exactly  $q(q^2-1)$ times. (As usual, a point $C$ in the moduli space is counted with weight $1/|\mbox{Aut}(C)|$.) With the notation from the introduction,
 \[\frac{\left|\left\{C \in \mathcal H_g:  \Tr(\Frob_C) = -s\right\}\right|'}{\left|\mathcal H_g\right|'}  = \frac{ \left| \left\{F \in \widehat{\mathcal F}_{2g+1} \cup \widehat{\mathcal F}_{2g+2}:  \widehat S_2(F) = s \right\}\right|} {\left|\widehat{\mathcal F}_{2g+1} \right| + \left| \widehat{\mathcal F}_{2g+2}\right|}  \]

Fix $x_1, \ldots, x_{q+1}$ an enumeration of the points on $\mathbb
P^1(\mathbb F_q)$ such that $x_{q+1}$ denotes the point at infinity.
Then
\[\chi_2(F(x_{q+1}) )= \begin{cases}
0 & F \in \widehat{\mathcal F}_{2g+1},\\
1 & F \in \widehat{\mathcal F}_{2g+2} \textrm{, leading coefficient is a square in  } \mathbb F_q,\\
-1 & F \in \widehat{\mathcal F}_{2g+2} \textrm{, leading coefficient
is not a square in  } \mathbb F_q.
\end{cases}\]

Pick $(\varepsilon_1, \ldots, \varepsilon_{q+1}) \in \{ 0, \pm 1
\}^{q+1}.$ Denote $m$ the number of zeros in this $(q+1)$-tuple.
We need to evaluate the probability that the character $\chi_2$
takes exactly these values as $F$ ranges over $\widehat{\mathcal
F}_{2g+1} \cup \widehat{\mathcal F}_{2g+2}.$ Namely we will show
that the results of \cite{KR} imply that
\begin{eqnarray}
\nonumber
&& \frac{\left| \left\{F \in \widehat{\mathcal F}_{2g+1} \cup \widehat{\mathcal F}_{2g+2}:  \chi_2(F(x_i)) = \varepsilon_i, 1\leq i \leq q+1 \right\}\right| } {\left| \widehat{\mathcal F}_{2g+1} \right| + \left| \widehat{\mathcal F}_{2g+2}\right|} \\
\label{char}&&= \frac{2^{m -q-1} q^{-m}}{(1+q^{-1})^{q+1}}\left( 1+ O \left(
q^{m/2+q-g-1}\right) \right).
\end{eqnarray}

{\bf Case 1:} $\varepsilon_{q+1} = 0$.  The numbers of zeros among  $\varepsilon_1, \ldots, \varepsilon_q$ is now $m-1$. Since there are no polynomials in  $\widehat{\mathcal F}_{2g+2}$ with
$\chi_2(F(x_{q+1}))=0$,  only $\widehat{\mathcal F}_{2g+1}$ contributes. There are $q-1$ possibilities for the leading coefficient of such a polynomial and thus
\begin{eqnarray*} && \left| \left\{F \in\widehat{\mathcal F}_{2g+1} \cup \widehat{\mathcal F}_{2g+2}:  \chi_2(F(x_i)) = \varepsilon_i, 1\leq i \leq q+1 \right\}\right|
\\
&&=\sum_{\alpha \in \F_q^*}\left | \left\{F \in \mathcal F_{2g+1}:  \chi_2(F(x_i)) = \varepsilon_i \chi_2(\alpha), 1\leq i \leq q \right\}\right|.
\end{eqnarray*}
Taking into account that there are $\frac{q-1}{2}$ squares in
$\mathbb F_q$ and the same number of non-squares, and using
Lemma \ref{KRproposition6}  the above expression can be written as
\begin{multline}\label{odd}
(q-1) \left(\frac{q-1}{2} \right)^{q-m +1} \frac{ (1-q^{-1})^{m} q^{2g+1-q}}{(1-q^{-2})^{q}} \left( 1+O \left(
q^{m/2+q-g-1 }\right) \right) \\
 =  \frac{2^{m-1-q} (1-q^{-1})^{q+2} q^{2g+3-m} }{ (1-q^{-2})^{q}}   \left( 1+O \left(
q^{m/2+q-g-1 }\right) \right).
 \end{multline}

With Lemma \ref{KRlemma3geom}, we compute
\begin{eqnarray}\label{sums} \left| \widehat{ \mathcal F}_{2g+1} \right| + \left| \widehat{\mathcal F}_{2g+2}\right|  = q^{ 2 g + 3} (1-q^{-1}) (1-q^{-2}), \end{eqnarray}
and dividing \eqref{odd} by \eqref{sums}
we get \eqref{char}.

{\bf Case 2:} $\varepsilon_{q+1} = \pm 1$.

This is the complementary situation, namely there are $m$ zeros among   $\varepsilon_1, \ldots, \varepsilon_q$ and only $\widehat{\mathcal F}_{2g+2}$ contributes.  By the same argument as before, and taking into
account that there are $\frac{q-1}{2}$ leading coefficients that
would give $\varepsilon_{q+1} =1$ and the same number that would
yield $\varepsilon_{q+1}=-1,$
\begin{eqnarray*} && \left| \left\{F \in \widehat{\mathcal F}_{2g+2} \cup \widehat{\mathcal F}_{2g+2}:  \chi_2(F(x_i)) = \varepsilon_i, 1\leq i \leq q+1 \right\}\right|\\
&&= \frac{q-1}{2} \left| \left\{F \in \mathcal F_{2g+2}:  \chi_2(F(x_i)) = \varepsilon_i \varepsilon_{q+1}, 1\leq i \leq q \right\}\right|. \end{eqnarray*}

By Lemma \ref{KRproposition6}, and by taking into account the number
of squares and non-squares in $\mathbb F_q$ this equals
\begin{multline*}
 \left(\frac{q-1}{2} \right)^{q+1-m} \frac{ (1-q^{-1})^{m+1} q^{2g+2 -q}}{(1-q^{-2})^{q}}  \left( 1+O \left(
q^{m/2+q-g-1}\right) \right)  \\
 =  \frac{2^{m-1-q} (1-q^{-1})^{q+2} q^{2g+3-m} }{ (1-q^{-2})^{q}}  \left( 1+O \left(
q^{m/2+q-g-1}\right) \right),
 \end{multline*}
which is the same as \eqref{odd}. The formula
\eqref{char} follows as before.

On the other hand for $X_1, \dots, X_{q+1}$ as in Theorem \ref{KRrevisitedthm}
\begin{eqnarray}\label{probhyper}
\Prob \left( X_i = \varepsilon_i, 1\leq i\leq q+1 \right)
=  \frac{2^{m -q-1} q^{-m}}{(1+q^{-1})^{q+1}}
\end{eqnarray}
and the theorem follows by summing \eqref{char} and \eqref{probhyper} over all $(q+1)$-tuples
$(\varepsilon_1, \ldots, \varepsilon_{q+1})$ such that
$\varepsilon_1+\cdots+\varepsilon_{q+1} = s$ as done at the end of Section \ref{geometry}.

\end{proof}
We remark that the probability of hitting a certain $(q+1)$-tuple does not depend on the entry at the point we designated as the point at infinity. Therefore that point behaves the same as the affine points, which is exactly what one would expect from a geometric standpoint.

\subsection{Moments}

We want to compute the moments of $
\Tr(\Frob_C)/\sqrt{q+1}.$ Namely, denote the $k$-th moment
by

\[M_k(q,g) = \frac{1}{\left|\mathcal H_g\right|'} \sideset{}{'}{\sum}_{C \in \mathcal H_g} \left( \frac{\Tr(\Frob_C)}{\sqrt{q+1}}  \right)^k.\]

For a given  curve $C\in \mathcal H_g$, its quadratic twist $C'$ also has
genus $g$ and they are not isomorphic over $\mathbb F_q.$ So both
$\Tr(\Frob_C)$ and $\Tr(\Frob_{C'})= -\Tr(\Frob_C)$ appear in our
sum. This implies that for $k$ odd we have $M_k(q,g)=0.$

\begin{theorem}\label{thm2}
If $g$, $q$ both tend to infinity, then the moments of
$\Tr(\Frob_C)/\sqrt{q+1}$, as $C$ runs over the moduli space
$\mathcal H_g$ of hyperelliptic curves of genus $g$, are
asymptotically Gaussian with mean $0$ and variance $1$. In particular
the limiting value distribution is a standard Gaussian.
\end{theorem}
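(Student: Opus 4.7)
The plan is to mimic the three-step strategy used for the trigonal case in Theorem \ref{momentsthm} and Corollary \ref{momentscor}, which simplifies here because the trace is real-valued and the character values lie in $\{0,\pm 1\}$. The first step is to compute the moments $M_k(q,g)$ explicitly and match them with the corresponding moments of $S_{q+1}/\sqrt{q+1}$, where $S_{q+1} = X_1 + \cdots + X_{q+1}$ and the $X_i$ are the i.i.d.\ random variables from Theorem \ref{KRrevisitedthm}. The second step is to apply the classical CLT to $S_{q+1}/\sqrt{q+1}$, and the third is to conclude via the method of moments.

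For the first step, using $\Tr(\Frob_C) = -\widehat{S}_2(F)$ and the geometric counting of Theorem \ref{KRrevisitedthm}, expand
\begin{eqnarray*}
\widehat{S}_2(F)^k = \sum_{x_1, \ldots, x_k \in \mathbb{P}^1(\mathbb{F}_q)} \prod_{i=1}^k \chi_2(F(x_i))
\end{eqnarray*}
and regroup each $k$-tuple into $\ell$ distinct points $z_1, \ldots, z_\ell$ with multiplicities $f_1, \ldots, f_\ell$ summing to $k$. For each such configuration, evaluate
\begin{eqnarray*}
\frac{1}{|\widehat{\mathcal{F}}_{2g+1}| + |\widehat{\mathcal{F}}_{2g+2}|} \sum_{F \in \widehat{\mathcal{F}}_{2g+1} \cup \widehat{\mathcal{F}}_{2g+2}} \prod_{i=1}^\ell \chi_2(F(z_i))^{f_i}
\end{eqnarray*}
by the same case analysis as in the proof of Theorem \ref{KRrevisitedthm}, splitting on whether some $z_i$ equals the point at infinity and using Lemma \ref{KRproposition6}. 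When every $f_i$ is even, the character values are all $1$ at points where $F$ does not vanish, yielding a main term of $(q/(q+1))^\ell$; when some $f_i$ is odd, summing over $\chi_2(a) = \pm 1$ kills the main term. In either case the error is of size $O(q^{\ell/2 + q - g - 1})$ with an implied constant independent of $q$.

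On the probabilistic side, the $X_i$ satisfy $\mathbb{E}(X_i) = 0$ and $\mathbb{E}(X_i^f) = q/(q+1)$ for $f \geq 2$ even, and $\mathbb{E}(X_i^f) = 0$ for $f$ odd. By independence, the expansion of $\mathbb{E}(S_{q+1}^k)/(q+1)^{k/2}$ is indexed by the same configurations and produces exactly the factor $(q/(q+1))^\ell$ precisely when all $f_i$ are even. Matching the two expansions term by term gives
\begin{eqnarray*}
M_k(q,g) = \mathbb{E}\!\left(\!\left(\frac{S_{q+1}}{\sqrt{q+1}}\right)^{\!\!k}\right)\left(1 + O\!\left(q^{k - g/2}\right)\right),
\end{eqnarray*}
with the implied constant independent of $q$. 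Finally, $X_1, \ldots, X_{q+1}$ are uniformly bounded i.i.d.\ random variables with mean $0$ and variance $q/(q+1) \to 1$, so the classical CLT (the Lindeberg condition is trivial since $|X_i/\sqrt{q+1}| \leq 1/\sqrt{q+1}$) yields $S_{q+1}/\sqrt{q+1} \to N(0,1)$ in distribution as $q \to \infty$, and uniform boundedness upgrades this to convergence of all moments. Combining with the previous matching, $M_k(q,g) \to \mathbb{E}(N^k)$ for $N \sim N(0,1)$ as $g, q \to \infty$, and the method of moments concludes the proof.

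The main obstacle, as in the trigonal case, is producing an error term whose implied constant is independent of $q$, so that $g$ and $q$ may be sent to infinity simultaneously; this is handled by tracking the $q$-dependence in Lemma \ref{KRproposition6} directly. No Tauberian input is needed here, which makes the hyperelliptic moment analysis strictly easier than the trigonal one carried out in Section \ref{sectionmoments}.
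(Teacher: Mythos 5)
Your outline follows essentially the same route as the paper's Section on hyperelliptic moments: expand $\widehat S_2(F)^k$ over tuples of points of $\PR^1(\F_q)$, regroup by distinct points $z_1,\dots,z_\ell$ with multiplicities, evaluate each configuration by the case analysis at infinity using the Kurlberg--Rudnick counting lemmas (the paper uses Lemma \ref{KRlemma5}; your appeal to Lemma \ref{KRproposition6} is equivalent here since only nonzero values are prescribed), observe that the main term $(q/(q+1))^\ell$ survives exactly when all exponents are even, match with $\mathbb E(X_i^{b})=q/(q+1)$ for $b$ even, and finish with the CLT for the triangular array plus the method of moments. All of that is correct and is what the paper does.

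The one point to fix is the intermediate error bound. You assert a per-configuration error $O\bigl(q^{\ell/2+q-g-1}\bigr)$; the $+q$ in the exponent belongs to the proof of Theorem \ref{KRrevisitedthm}, where values are prescribed at \emph{all} $q+1$ points of $\PR^1(\F_q)$. In the moment computation only $\ell\le k$ points carry conditions, and Lemma \ref{KRlemma5} gives a per-configuration relative error of size roughly $q^{\ell-g-1}$ (no $q$ in the exponent beyond $\ell$), which after summing the $\le (q+1)^{k}$ configurations against the normalization $(q+1)^{-k/2}$ yields the paper's $O\bigl(q^{(3k-3-2g)/2}\bigr)$. If one took your stated bound literally, the accumulated error would be of order $q^{k+q-g-1}$, which does not tend to zero unless $g$ grows faster than $q$, so the theorem in its stated generality (arbitrary joint growth of $g$ and $q$) would not follow; moreover your final matching $M_k(q,g)=\mathbb E\bigl((S_{q+1}/\sqrt{q+1})^k\bigr)\bigl(1+O(q^{k-g/2})\bigr)$ does not follow from that intermediate bound, although it is implied by the corrected one. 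A second, smaller point: $S_{q+1}/\sqrt{q+1}$ is not uniformly bounded (only the summands are), so "uniform boundedness upgrades to moment convergence" should be justified via uniform integrability (uniform bounds on higher moments of the normalized sums, as in the Billingsley reference the paper cites) or by computing the moments of the normalized sum directly, as the paper effectively does.
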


As before, by looking at curves of the form
\[ Y^2 = F(X)\]
as $F$ ranges over $\widehat{\mathcal F}_{2g+1} \cup
\widehat{\mathcal F}_{2g+2}$ we run over each point in $\mathcal
H_g$ exactly $q(q^2-1)$ times. The trace of Frobenius of the curve with the  above
affine model is given by $\widehat S_2(F).$ As a result, for $k$ even, we can write
the $k$-th moment as
\begin{eqnarray*}
M_k(q,g) &=&  (-1)^k\frac{1} {\left|\widehat{\mathcal F}_{2g+1} \cup \widehat{\mathcal F}_{2g+2}\right|}\sum_{F \in \widehat{\mathcal F}_{2g+1} \cup \widehat{\mathcal F}_{2g+2}}  \left( \frac{\widehat S_2(F)}{\sqrt{q+1}}  \right)^k \\
&&\\
& = & \frac{(q+1)^{-k/2} } {\left|\widehat{\mathcal F}_{2g+1} \cup \widehat{\mathcal F}_{2g+2}\right|}\sum_{F \in \widehat{\mathcal F}_{2g+1} \cup \widehat{\mathcal F}_{2g+2}}  \left( \sum_{x \in \mathbb P^1(\mathbb F_q)} \chi_2(F(x)) \right)^k \\
&&\\
&=& \frac{1}{(q+1)^{k/2}} \sum_{x_1, \ldots, x_k \in \mathbb P^1(\mathbb F_q) }   \frac{1} {\left|\widehat{\mathcal F}_{2g+1} \cup \widehat{\mathcal F}_{2g+2}\right|}\sum_{F \in \widehat{\mathcal F}_{2g+1} \cup \widehat{\mathcal F}_{2g+2}}\!\!\!\!\!\!  \chi_2(F(x_1)\cdots F(x_k))\\
&&\\
&=& \frac{1}{(q+1)^{k/2}} \sum_{\ell=1}^k c(k,\ell) \!\!\!\sum_{({\mathbf
x},{\mathbf  {b}}) \in P_{k,\ell} }   \frac{1} {\left|\widehat{\mathcal
F}_{2g+1} \cup \widehat{\mathcal F}_{2g+2}\right|} \sum_{F \in
\widehat{\mathcal F}_{2g+1} \cup \widehat{\mathcal F}_{2g+2}}
\!\!\!\!\!\!\!\!\! \chi_2\left( \prod_{i=1}^l F(x_i)^{b_i}\right),
\end{eqnarray*}
where
\[P_{k,\ell} = \left\{(\mathbf x, \mathbf b): \mathbf x= (x_1, \ldots, x_\ell) \in \mathbb P^1(\mathbb F_q)^\ell, x_i \textrm{'s distinct }, \mathbf b = (b_1, \ldots, b_\ell) \in \mathbb Z_{>0}^\ell, \sum_{i=1}^l b_i =k   \right\}, \]
and $c(k,\ell)$ is a certain combinatorial factor. We do not need exact
formulas for the $c(k,\ell)$, but note that
\begin{equation}\label{comb}
\sum_{l=1}^k c(k,\ell)  \sum_{({\mathbf x},{\mathbf  {b}}) \in P_{k,\ell}
}  1 =   (q+1)^ k. \end{equation}

Fix a vector $(\mathbf x, \mathbf b) \in P_{k,\ell}.$ There are two
cases.

{\bf Case 1:} $x_j$ is the point at infinity, for some $1\leq j \leq \ell.$

Then only polynomials in $ \widehat{\mathcal F}_{2g+2}$ have a
nonzero contribution and we can write
\[\sum_{F \in \widehat{\mathcal F}_{2g+2}} \chi_2\left( \prod_{i=1}^\ell F(x_i)^{b_i}\right) = \sum_{a_j \in \mathbb F_q^*} \chi_2(a_j)^{b_j} \sum_{G \in \mathcal F_{2g+2}}  \prod_{i\neq j} \chi_2(G(x_i))^{b_i}.
\]
Note that $G$ ranges over \emph{monic }polynomials of degree $2g+2$
and we can write the above expression becomes
\[ \sum_{a_j \in \F_q^*} \chi_2(a_j)^{b_j} \sum_{G \in \mathcal F_{2g+2}}  \prod_{i\neq j} \chi_2(G(x_i))^{b_i}= \sum_{a_i \in \F_q^* \atop 1\leq i \leq l} \sum_{G \in \mathcal F_{2g+2} \atop {G(x_i) = a_i, i\neq j} }\prod_{i=1}^\ell \chi_2(a_i)^{b_i}.\]

Thus, by Lemma \ref{KRlemma5}, the contribution to the moment of
such a term is
\[\begin{cases}
 O\left( q^{1+ \ell+g}\right) & \textrm{ if any of the }b_i\textrm{'s is odd},\\
 \\
\displaystyle  \frac{q^{2g+3 }(1-q^{-1})^{\ell+1}}{(1-q^{-2})^{\ell-1}} +
O\left( q^{1+\ell+g}\right) &  \textrm{ if all the }b_i\textrm{'s are
even}.
 \end{cases}\]

{\bf Case 2:} $x_1, \ldots, x_\ell \in \mathbb F_q$.

Then both  $ \widehat{\mathcal F}_{2g+1}$ and  $ \widehat{\mathcal
F}_{2g+2}$ contribute. Repeating the reasoning from before, the
contribution is
\[ O\left( q^{3/2+\ell+g}\right) \]
unless all the $b_i$'s are even. In which case
\[ \sum_{F \in \widehat{\mathcal F}_{2g+1}} \chi_2\left( \prod_{i=1}^\ell F(x_i)^{b_i}\right) = \frac{q^{2g+2} (1-q^{-1})^{\ell+2}}{(1-q^{-2})^\ell}  + O\left( q^{1+\ell+g}\right) \]
and
\[ \sum_{F \in \widehat{\mathcal F}_{2g+2}} \chi_2\left( \prod_{i=1}^\ell F(x_i)^{b_i}\right) = \frac{q^{2g+3} (1-q^{-1})^{\ell+2}}{(1-q^{-2})^\ell}  + O\left( q^{3/2+\ell+g}\right). \]
Adding them up, we get a total contribution of
\[ \frac{q^{2g+3 }(1-q^{-1})^{\ell+1}}{(1-q^{-2})^{\ell-1}} + O\left( q^{3/2+ \ell+g}\right).\]

In both cases, the main term is the same, and dividing by
\[ \left| \widehat{ \mathcal F}_{2g+1} \right| + \left| \widehat{\mathcal F}_{2g+2}\right|  = q^{ 2 g + 3} (1-q^{-1}) (1-q^{-2}) \]
we obtain that
\[M_k(q,g) =  \frac{1}{(q+1)^{k/2}} \sum_{l=1}^k c(k,\ell) \sum_{({\mathbf x},{\mathbf  {b}}) \in P_{k,\ell} \atop {b_i \, \mathrm {even} }  }  (1+q^{-1}) ^{-\ell}  + O((q+1)^{-k/2} (q+1)^{k}
q^{-3/2 + k -g}),\]
where the error term is estimated using \eqref{comb}.

On the other hand, the corresponding moment of the normalized sum of our random variables is
\[\mathbb E \left( \left( \frac{1}{\sqrt{q+1}} \sum_{i=1}^{q+1} X_i \right)^k \right) = \frac{1}{(q+1)^{k/2}} \sum_{\ell=1}^k c(k,\ell) \sum_{({\mathbf i},{\mathbf  {b}}) \in A_{k,\ell} }\mathbb E \left( X_{i_1}^{b_1} \cdots X_{i_\ell}^{b_\ell}  \right),
\]
where
\[A_{k,\ell} = \left\{(\mathbf i, \mathbf b); \mathbf i= (i_1, \ldots, i_\ell) , 1\leq i_j\leq q+1,  i_j \textrm{'s distinct }, \mathbf b = (b_1, \ldots, b_\ell), \sum_{j=1}^\ell b_j =k   \right\} \]
is clearly isomorphic to $P_{k,\ell}.$

Since
\[\mathbb E(X_i^b) = \begin{cases}
0 & b \textrm{ odd}\\
\displaystyle \frac{1}{1+q^{-1}} & b \textrm{ even}
\end{cases}\]
and $X_1, \ldots, X_{q+1}$ are independent, we get
\begin{equation}\label{moments}
M_k(q,g) = \mathbb E \left( \left( \frac{1}{\sqrt{q+1}} \sum_{i=1}^{q+1}
X_i \right)^k \right)  + O\left(
q^{(3k-3-2g)/2}\right).\end{equation}

Since the moments of a sum of bounded i.i.d.~random variables
converge to the Gaussian moments \cite[Section 30]{Bi}, it follows that, as $q, g
\rightarrow \infty ,$ $M_k(q,g)$ agrees with  Gaussian moments for
all $k$. Hence the limiting value distribution of
$\Tr(\Frob_C)/\sqrt{q+1}$ is a standard Gaussian distribution with mean 0 and
variance 1.

\section{General case}
\label{gen}

In this section we briefly sketch the proof of our results for  the case of curves $C$ that have a cyclic $p$-to-$1$ map to $\mathbb{P}^1(\mathbb F_q)$, where  $q \equiv 1 \pmod p$ and $p$ is an odd prime.  As we mentioned in the introduction, the proof of the general case follows from the same techniques as in the cyclic trigonal case.

Denote by $\mathcal F_{(d_1, \ldots, d_{r})}$ the set of polynomials of the form $F(X) = F_1(X) F_2^2(X) \cdots F_{r}^{r}(X)$ with $F_1, \ldots, F_{r}$  monic, square-free and pairwise coprime polynomials of degrees $d_1, \ldots, d_{r}$, respectively. We note that when $r=p-1$ this is the set of monic $p$-th power-free polynomials. 
For fixed $x_1, \ldots, x_{\ell}$ distinct points in $\mathbb F_q$ and $a_1, \ldots, a_\ell \in \mathbb F_q^*$,
\begin{equation*}
\left|\left\{F\in \mathcal F_{(d_1, \ldots, d_r)}: F(x_i) = a_i\right\}\right| =
\frac{q^{d_1-\ell}}{\zeta_q(2) (1-q^{-2})^\ell} \sum_{\deg F_2=d_2} \ldots \sum_{\deg F_r=d_r} b(F_2\ldots F_r) + O\left(q^{d_1/2+d_2+\ldots+d_r}\right),
\end{equation*}
where $b(F)$ is the quantity defined in \eqref{b's}. Here we used the fact that $b(F)$ is multiplicative and $b(F_1\dots F_r)=0$ if the $F_i$ are not relatively prime in pairs.

Using the Tauberian theorem and an induction argument on $r$, we obtain the following result which mirrors Proposition~\ref{nonzerovalues}.

\begin{proposition}
Fix $0 \leq \ell \leq q$,  $x_1, \ldots, x_\ell$ distinct points in $\mathbb F_q$ and $a_1, \ldots, a_\ell$ nonzero elements of $\mathbb F_q$. For each $r \geq 2,$
\begin{eqnarray*}
\left|\left\{F\in \mathcal F_{(d_1, \ldots, d_{r})}: F(x_i) = a_i, 1\leq i \leq\ell \right\}\right| &=&\frac{L_{r-1}q^{d_1+\dots + d_r}}{\zeta_q(2)^r } \left(\frac{q}{(q+r)(q-1)} \right)^\ell  \\
&\times&
\left(1 + O\left(q^{\varepsilon(d_2+\dots+d_r+\ell)} \left(q^{-d_{2}}+\dots +q^{-d_r}\right) + q^{-d_1/2+\ell}\right)\right),
\end{eqnarray*}
where
\[L_{r-1} = \prod_{j=1}^{r-1} \prod_P  \left( 1-\frac{j}{(|P|+1)(|P|+j)}\right).\]
\end{proposition}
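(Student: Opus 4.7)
My plan is to mimic the proof of Proposition~\ref{nonzerovalues} in the multi-variable setting. The displayed identity preceding the proposition reduces the task to estimating
\[
T_r := \sum_{\deg F_2=d_2}\cdots\sum_{\deg F_r=d_r} b(F_2\cdots F_r),
\]
with $b$ as in~\eqref{b's}. Granted the asymptotic
\[
T_r = \frac{L_{r-1}}{\zeta_q(2)^{r-1}}\left(\frac{q+1}{q+r}\right)^\ell q^{d_2+\cdots+d_r}\Bigl(1 + O\bigl(q^{\varepsilon(d_2+\cdots+d_r+\ell)}(q^{-d_2}+\cdots+q^{-d_r})\bigr)\Bigr),
\]
a direct substitution combined with the simplification $(1-q^{-2})^{-\ell}\left(\frac{q+1}{q+r}\right)^{\ell} = \left(\frac{q}{(q-1)(q+r)}\right)^{\ell}$ yields the main term in the statement, while the error $O(q^{d_1/2+d_2+\cdots+d_r})$ from the displayed identity, divided by the main term (of relative size $q^{-\ell}$ in the $\ell$-variable), contributes the $q^{-d_1/2+\ell}$ error.

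Since $b$ vanishes on any polynomial containing a repeated prime factor, only pairwise coprime square-free tuples $(F_2,\ldots,F_r)$ contribute to $T_r$, and then $b(F_2\cdots F_r)=\prod_j b(F_j)$. This suggests studying the multi-variable Dirichlet series
\[
G_r(s_2,\ldots,s_r) := \prod_{P:\,P(x_i)\neq 0\,\forall i}\!\left(1 + \frac{|P|}{|P|+1}\sum_{j=2}^r |P|^{-s_j}\right),
\]
whose Euler product is valid because each prime divides at most one $F_j$, and only to the first power. I would factor out the poles by dividing each local factor by $\prod_{j=2}^r(1+|P|^{-s_j})$, recognizing the latter product over $P$ as $\prod_{j=2}^r \zeta_q(s_j)/\zeta_q(2s_j)$, to obtain
\[
G_r(s_2,\ldots,s_r) = \prod_{j=2}^r\frac{\zeta_q(s_j)}{\zeta_q(2s_j)}\cdot H_r(s_2,\ldots,s_r)\cdot\prod_{i=1}^\ell\!\left(1 + \frac{q}{q+1}\sum_{j=2}^r q^{-s_j}\right)^{-1},
\]
where $H_r$ has an Euler product that converges absolutely on $\mathrm{Re}(s_j)>1/2$. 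A direct computation gives the local factor of $H_r$ at $(s_2,\ldots,s_r)=(1,\ldots,1)$ as $|P|^{r-1}(|P|+r)/(|P|+1)^r$; a short induction on $r$ shows this telescopes as $\prod_{j=1}^{r-1}\bigl(1 - j/((|P|+1)(|P|+j))\bigr)$, so $H_r(1,\ldots,1)=L_{r-1}$.

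I would then apply the function field Tauberian theorem (Theorem~17.1 of~\cite{Ro}) to each variable $s_j$ in turn, exactly as in the proof of Proposition~\ref{nonzerovalues}. Each Tauberian application extracts the simple pole of $\zeta_q(s_j)/\zeta_q(2s_j)$ at $s_j=1$ with residue $1/(\log q\cdot \zeta_q(2))$, yielding a main contribution of $q^{d_j}/\zeta_q(2)$ and an error $O(q^{\varepsilon d_j})$ governed by the maximum of the remaining factors on the contour $|q^{-s_j}|=q^{-\varepsilon}$. At $(s_2,\ldots,s_r)=(1,\ldots,1)$ the $x_i$-correction evaluates to $((q+1)/(q+r))^\ell$, and the $q$-uniform bound $(1-q^{-\varepsilon})^{-\ell}\ll q^{\varepsilon\ell}$ from the single-variable case carries over verbatim.

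The main obstacle is verifying the identification $H_r(1,\ldots,1) = L_{r-1}$: the inductive telescoping is straightforward once the ratio of local factors is written out, but the algebraic manipulation requires some care. A secondary concern is the error bookkeeping across the $r-1$ successive Tauberian applications: each step in variable $s_j$ contributes a relative error of order $q^{\varepsilon\ell - (1-\varepsilon)d_j}$ which, summed over $j=2,\ldots,r$, is dominated by the claimed $q^{\varepsilon(d_2+\cdots+d_r+\ell)}(q^{-d_2}+\cdots+q^{-d_r})$.
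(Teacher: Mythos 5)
Your proposal follows essentially the same route the paper sketches for this proposition — reduce to the sum of $b(F_2\cdots F_r)$ via the displayed identity, form the multivariable Euler product mirroring Proposition~\ref{nonzerovalues}, and extract the main term by iterated applications of the function-field Tauberian theorem with induction on $r$ — and your telescoping identification $H_r(1,\ldots,1)=\prod_P |P|^{r-1}(|P|+r)/(|P|+1)^r=L_{r-1}$ checks out. The only cosmetic slip is the stated simplification, which should be $q^{-\ell}(1-q^{-2})^{-\ell}\left(\frac{q+1}{q+r}\right)^{\ell}=\left(\frac{q}{(q-1)(q+r)}\right)^{\ell}$, the extra $q^{-\ell}$ coming from the prefactor $q^{d_1-\ell}$; with that, the main term and the $q^{-d_1/2+\ell}$ error follow exactly as you describe.
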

Denote
\[\mathcal F_{(d_1, \ldots, d_{p-1})}^{(k_1, \ldots, k_{p-1})} = \left\{F=F_1\ldots F_{p-1}^{p-1}\in \mathcal F_{(d_1, \ldots, d_{p-1})}; F_i \textrm { has $k_i$ roots in } \mathbb F_q, 1\leq i \leq p-1 \right \}. \]
Proceeding as in the proof of Corollary \ref{whatever}, we obtain the following.

\begin{corollary} \label{fixedzeros}
Fix $0\leq m \leq q$. Choose $x_1, \ldots, x_{q}$ an enumeration of the points of $\mathbb F_q$, and values $a_{1} = \ldots = a_{m}=0$, $a_{m+1}, \ldots, a_{q} \in \mathbb F_q^*$. Pick a partition $m=k_1+\ldots +k_{p-1}$. Then for any $\varepsilon > 0$,
\begin{eqnarray*}
&& |\{ F \in \mathcal F_{(d_1, \ldots, d_{p-1})}^{(k_1, \ldots, k_{p-1})}: F(x_i)=a_i, 1\leq i\leq q\}| \\
&&=\binom{m}{k_1, \, \ldots \,,  k_{p-1}}  \frac{L_{p-2}q^{d_1+\dots + d_{p-1}}}{\zeta_q(2)^{p-1} }  \left(\frac{1}{q+p-1}\right)^m\left(\frac{q}{(q+p-1)(q-1)}\right)^{q-m}\\
&& \times
\left(1 + O\left(q^{\varepsilon(d_2+\dots+d_{p-1}+k_1-m+q)} \left(q^{-(d_{2}-k_2)}+\dots +q^{-(d_{p-1}-k_{p-1})} \right)+ q^{-(d_1-k_1)/2+q}\right)\right).
 \end{eqnarray*}
\end{corollary}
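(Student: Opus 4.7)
The plan is to reduce the count to the previous Proposition by stripping off the linear factors coming from the $\mathbb F_q$-roots of $F$. Since $F = F_1 F_2^2 \cdots F_{p-1}^{p-1}$ with the $F_j$ pairwise coprime, each of the $m$ points $x_i$ with $a_i = 0$ is a root of exactly one $F_j$, and the hypothesis that $F_j$ has $k_j$ roots in $\mathbb F_q$ forces a partition of the $m$ zero-points into blocks of sizes $k_1, \ldots, k_{p-1}$. The number of such partitions is exactly $\binom{m}{k_1, \ldots, k_{p-1}}$.

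Given such a partition, I would factor $F_j = L_j \tilde F_j$, where $L_j$ is the product of the $k_j$ linear factors assigned to $F_j$ and $\tilde F_j$ has degree $d_j - k_j$, so that $F = L_1 L_2^2 \cdots L_{p-1}^{p-1}\, G$ with $G = \tilde F_1 \tilde F_2^2 \cdots \tilde F_{p-1}^{p-1} \in \mathcal F_{(d_1-k_1, \ldots, d_{p-1}-k_{p-1})}$ having no roots in $\mathbb F_q$. This is a bijection between the set on the left-hand side and pairs (partition, $G$) subject to value constraints: at the $q-m$ points with $a_i \neq 0$, $G(x_i)$ is forced to equal the specific nonzero value $a_i / \prod_j L_j(x_i)^j$; at the $m$ points with $a_i = 0$, $G(x_i)$ must be nonzero but is otherwise free, contributing $q-1$ choices each.

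Next, I would sum over these $(q-1)^m$ free choices and apply the previous Proposition with $\ell = q$ and $d_j$ replaced by $d_j - k_j$. The resulting main term
\[
\binom{m}{k_1,\ldots,k_{p-1}}(q-1)^m \cdot \frac{L_{p-2}\, q^{d_1+\cdots+d_{p-1}-m}}{\zeta_q(2)^{p-1}} \left(\frac{q}{(q+p-1)(q-1)}\right)^q
\]
rearranges via the identity
\[
(q-1)^m q^{-m} \left(\frac{q}{(q+p-1)(q-1)}\right)^q = \left(\frac{1}{q+p-1}\right)^m \left(\frac{q}{(q+p-1)(q-1)}\right)^{q-m}
\]
to match the stated expression. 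In the error term, the shift yields $(d_2+\cdots+d_{p-1})-(k_2+\cdots+k_{p-1}) = d_2+\cdots+d_{p-1}+k_1-m$, which is exactly the exponent of $q^\varepsilon$ in the claim, while the remaining summands $q^{-(d_j - k_j)}$ and $q^{-(d_1-k_1)/2+q}$ are already in the required form.

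The hard part is pure bookkeeping: tracking how the shift $d_j \mapsto d_j - k_j$ and the sum over the $(q-1)^m$ free values propagate through the Proposition's main and error terms. Conceptually this is the direct $p$-fold analogue of the argument used in Corollary \ref{whatever} for the trigonal case, and no new ideas are needed beyond careful accounting.
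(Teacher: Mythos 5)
Your proposal is correct and follows essentially the same route as the paper, which proves this corollary by "proceeding as in the proof of Corollary \ref{whatever}": partition the $m$ zero points among the $F_j$'s (giving the multinomial factor), strip off the corresponding linear factors to reduce to a polynomial in $\mathcal F_{(d_1-k_1,\ldots,d_{p-1}-k_{p-1})}$ with prescribed nonzero values at all $q$ points, sum over the $(q-1)^m$ free nonzero values, and apply the preceding Proposition with shifted degrees. Your bookkeeping of the main term identity and of the exponent $d_2+\cdots+d_{p-1}+k_1-m$ in the error term matches the paper's statement.
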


Summing over all such possible partitions of $m$, just as we did in the proof of Corollary \ref{formoments}, and using the Multinomial Theorem, we obtain that
 \begin{align*}
&\frac{ \left|\left\{F\in \mathcal F_{(d_1, \ldots, d_{p-1})}: F(x_i) = a_i, 1\leq i\leq q\right\}\right|}{ \left| \mathcal F_{(d_1, \ldots, d_{p-1})}\right| }=  \left(\frac{p-1}{q+p-1}\right)^m\left(\frac{q}{(q+p-1)(q-1)}\right)^{q-m} \\
&\times \left(1 + O\left(q^{\varepsilon(d_2+\dots+d_{p-1}+q)+(1-\varepsilon)m} \left(q^{-d_{2}}+\dots +q^{-d_{p-1}}\right) + q^{-(d_1-m)/2+q}\right)\right). \\
\end{align*}

Taking into account the number of elements in each $p$-power residue class in $\mathbb F_q$, we arrive at the following result, which corresponds to Theorem~\ref{countingF} from the cyclic trigonal case.

\begin{theorem}\label{conjp}
Choose $x_1, \ldots, x_q$ an enumeration of the points of $\mathbb F_q$. Fix $\varepsilon_1, \dots, \varepsilon_q \in \mathbb C$, such that $m$ of them are $0$ and the rest are $p$-th roots of unity. Then for any $\varepsilon >0 $,
\begin{align*}
&\frac{ \left|\{F\in \mathcal F_{(d_1, \ldots, d_{p-1})}: \chi_p(F(x_i)) =
\varepsilon_i, 1\leq i\leq q\}\right|}{ \left| \mathcal F_{(d_1, \ldots, d_{p-1})}\right| }=  \left(\frac{p-1}{q+p-1}\right)^m\left(\frac{q}{p(q+p-1)}\right)^{q-m} \\
&\times \left(1 + O\left(q^{\varepsilon(d_2+\dots+d_{p-1}+q)+(1-\varepsilon)m} \left(q^{-d_{2}}+\dots +q^{-d_{p-1}}\right) + q^{-(d_1-m)/2+q}\right)\right). \\
\end{align*}
\end{theorem}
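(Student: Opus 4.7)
The plan is to deduce Theorem \ref{conjp} essentially immediately from the displayed formula right before it (the one obtained by summing Corollary \ref{fixedzeros} over all partitions of $m$), which already counts polynomials in $\mathcal F_{(d_1,\ldots,d_{p-1})}$ attaining prescribed \emph{values} $a_i$ at $x_1,\ldots,x_q$ with exactly $m$ of the $a_i$ equal to zero. All that is needed is to repackage that value-level count as a character-level count.

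First I would translate each character condition $\chi_p(F(x_i))=\varepsilon_i$ into a union of pointwise-value conditions. An index $i$ with $\varepsilon_i=0$ forces $F(x_i)=0$, while an index $i$ with $\varepsilon_i$ a $p$-th root of unity corresponds to $F(x_i)\in\chi_p^{-1}(\varepsilon_i)\subset\mathbb F_q^*$. Since $q\equiv 1\pmod p$, the character $\chi_p$ is equidistributed across its nonzero image, so $|\chi_p^{-1}(\varepsilon_i)|=(q-1)/p$ for each $p$-th root of unity $\varepsilon_i$. Writing
\begin{equation*}
\bigl|\{F\in\mathcal F_{(d_1,\ldots,d_{p-1})}:\chi_p(F(x_i))=\varepsilon_i,\,1\le i\le q\}\bigr|=\sum_{(a_{m+1},\ldots,a_q)}\bigl|\{F:F(x_i)=a_i,\,1\le i\le q\}\bigr|,
\end{equation*}
where the outer sum ranges over the $((q-1)/p)^{q-m}$ tuples with $\chi_p(a_i)=\varepsilon_i$ for $i>m$ (and $a_1=\cdots=a_m=0$), one applies the preceding formula to each inner term and divides by $|\mathcal F_{(d_1,\ldots,d_{p-1})}|$. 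The main term collapses to
\begin{equation*}
\left(\frac{q-1}{p}\right)^{q-m}\left(\frac{p-1}{q+p-1}\right)^m\left(\frac{q}{(q+p-1)(q-1)}\right)^{q-m}=\left(\frac{p-1}{q+p-1}\right)^m\left(\frac{q}{p(q+p-1)}\right)^{q-m},
\end{equation*}
which is precisely the claimed main term.

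The only subtlety is to verify that the error estimate in the preceding displayed formula is uniform in the choice of nonzero values $a_{m+1},\ldots,a_q$. Inspection of Corollary \ref{fixedzeros} and of the partition sum that produced the subsequent display shows that the error depends only on $m$ and on $(d_1,\ldots,d_{p-1},q)$, not on the specific nonzero values. Therefore the $((q-1)/p)^{q-m}$ summands contribute errors of the same relative size, and the error estimate of Theorem \ref{conjp} is inherited unchanged. Hence there is no substantive obstacle: the theorem is a routine repackaging of the value-level count together with the equidistribution of $\chi_p$ on $\mathbb F_q^*$.
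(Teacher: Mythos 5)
Your proposal is correct and matches the paper's own argument: the paper deduces Theorem \ref{conjp} from the preceding value-level display precisely by "taking into account the number of elements in each $p$-power residue class in $\mathbb F_q$," i.e.\ summing over the $((q-1)/p)^{q-m}$ admissible value tuples, exactly as you do (and as is done for $p=3$ at the end of Section 4). Your extra remark that the error term is uniform in the nonzero values $a_i$ is a correct and worthwhile observation that the paper leaves implicit.
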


Proceeding as in Section \ref{geometry}, one can prove that the point at infinity behaves like any other point of $\mathbb P^1(\mathbb F_q).$ For a curve $C$ with affine model $Y^p=F(X)$, $F = F_1 F_2^2\ldots F_{p-1}^{p-1}$ the number of branch points is $R = d_1+\ldots+d_{p-1}$ if $\deg F \equiv 0 \pmod p$ or $R= d_1 +\ldots+d_{p-1}+1$ otherwise.  The genus of such a curve is always $g=(p-1)(R-2)/2$. The moduli space breaks into a disjoint union of irreducible components

\[\mathcal H_g = \bigcup \mathcal H^{(d_1, \ldots, d_{p-1})}.\]
Here the components are indexed by tuples $(d_1, \ldots, d_{p-1})$  with the properties that $(p-1)(d_1+\ldots +d_{p-1}-2) =2g$ and $d_1+2d_2+\ldots +(p-1)d_{p-1} \equiv 0 \pmod p$, taking into account the fact that two such tuples give the same component under certain equivalence relations (in the case $p=3$ this amounts to switching $d_1$ and $d_2$). We also remark that, just as in the cyclic trigonal case, for a curve $C$ of genus $g>(p-1)^2$, the cyclic $p$-to-$1$ map to $\mathbb P^1(\mathbb F_q)$ is uniquely determined up to isomorphisms of $\mathbb P^1(\mathbb F_q)$. So when the genus passes this threshold, counting  all possible affine models for curves of a fixed inertia type
(with the appropriate weights) will count each curve with the same multiplicity. Namely,

\[ \left|\mathcal H^{(d_1, \ldots, d_{p-1})}\right|' = \frac{1}{q(q^2-1)} \left( \left|\mathcal F_{(d_1, \ldots, d_{p-1})}\right| +  \left |\mathcal F_{(d_1-1, \ldots, d_{p-1})}\right|+ \ldots +  \left|\mathcal F_{(d_1, \ldots, d_{p-1}-1)}\right|\right).\]

Similar to  the cyclic trigonal curves, the curves $C$ are endowed with an automorphism of order $p$ that splits the first cohomology group $H^1$ into subspaces $H^1_{\chi_p}, H^1_{\chi^2_p}, \ldots, H^1_{\chi^{p-1}_p}$ on which the automorphism acts by multiplication by $\chi_p,\ldots, \chi^{p-1}_p$ respectively (for a choice of an order $p$ character $\chi_p$). Since this automorphism commutes with the action of Frobenius, it suffices to study the trace of Frobenius on one of these subspaces, say $H^1_{\chi_p}.$ Moving to another subspace amounts to a new choice of $\chi_p.$

The trace of Frobenius of the curve $C$ with affine model
\[C: Y^p= F(X)\] on each subspace of $H^1$ is then given by

\[\Tr(\Frob_C|_{H^1_{\chi^j_p}}) = - \sum_{x \in \mathbb P^1(\mathbb F_q)} \chi_p^j(F(x)) = -\widehat S_j(F) \qquad 1\leq j \leq p-1\]
where the value of $F$ at the point at infinity is the value at zero of
$X^{\deg F} F(1/X)$ if $\deg F \equiv 0 \pmod p$ and $0$ otherwise. 
The number of points of $C$ over $\F_q$ (including the points at
infinity) is then
\[q+1 + \left( \widehat S_1(F)+\ldots+\widehat S_{p-1}(F) \right).\]

Following the argument from the proof of Theorem  \ref{componentd1d2}, one can show that the projective trace is distributed just like the affine trace.

\begin{theorem}\label{comp-gen}
Let $X_1, \ldots, X_{q+1}$ be complex i.i.d.~random variables taking the value $0$ with probability $(p-1)/(q+p-1)$ and each of the $p$-th roots of unity in $\mathbb C$ with probability $q/(p(q+p-1)).$
As $d_1, \ldots, d_{p-1} \rightarrow \infty$,
\begin{align*} &\frac{\left\vert \left\{ C \in \mathcal H^{(d_1, \ldots, d_{p-1})} : \Tr(\Frob_C|_{H^1_{\chi_p}}) = -s \right\} \right\vert '}{\left\vert \mathcal H^{(d_1, \ldots, d_{p-1})}\right\vert '}
\\ &= \Prob\left(\sum_{i=1}^{q+1}X_i = s\right)
 \left(1 + O\left(q^{\varepsilon(d_2+\dots+d_{p-1})+q} \left(q^{-d_{2}}+\dots +q^{-d_{p-1}}\right) + q^{-(d_1-3q)/2}\right)\right)
\end{align*}
 for any $s \in  \mathbb C$, $|s|\leq q+1$ and $0 > \varepsilon > 1$.
\end{theorem}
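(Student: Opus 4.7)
The plan is to follow the template established in the proof of Theorem~\ref{componentd1d2}, replacing the three-part analysis used for the cyclic trigonal case with a $p$-part analysis. First I will use the formula
\[
\left|\mathcal H^{(d_1,\ldots,d_{p-1})}\right|' = \frac{1}{q(q^2-1)}\left(\left|\widehat{\mathcal F}_{(d_1,\ldots,d_{p-1})}\right| + \sum_{j=1}^{p-1}\left|\widehat{\mathcal F}_{(d_1,\ldots,d_j-1,\ldots,d_{p-1})}\right|\right)
\]
together with the projective trace identity $\Tr(\Frob_C|_{H^1_{\chi_p}}) = -\widehat S_1(F)$ to reduce the left hand side of the theorem to a ratio of counts of polynomials with prescribed values of $\chi_p$ at the $q+1$ points of $\PR^1(\F_q)$, exactly as in equation \eqref{countH}.

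Next I fix a tuple $(\varepsilon_1,\ldots,\varepsilon_{q+1})\in\{0,1,\zeta_p,\ldots,\zeta_p^{p-1}\}^{q+1}$ with $m$ zeros, and compute the number of $F\in\widehat{\mathcal F}_{[d_1,\ldots,d_{p-1}]}:=\widehat{\mathcal F}_{(d_1,\ldots,d_{p-1})}\cup\bigcup_{j}\widehat{\mathcal F}_{(d_1,\ldots,d_j-1,\ldots,d_{p-1})}$ with $\chi_p(F(x_i))=\varepsilon_i$ for $1\le i\le q+1$. The analysis splits into two cases based on the value at infinity:
\begin{itemize}
\item If $\varepsilon_{q+1}=0$, only the $p-1$ subfamilies $\widehat{\mathcal F}_{(d_1,\ldots,d_j-1,\ldots,d_{p-1})}$ contribute, since their polynomials have degree not divisible by $p$. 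Their affine values realize $m-1$ zeros and $q-m+1$ prescribed nonzero $\chi_p$-values among $\varepsilon_1,\ldots,\varepsilon_q$, so Theorem~\ref{conjp} (applied to each index $j$ in turn) gives the count.
\item If $\varepsilon_{q+1}$ is a $p$-th root of unity, only $\widehat{\mathcal F}_{(d_1,\ldots,d_{p-1})}$ contributes, and its polynomials have leading coefficient $\alpha\in\F_q^*$ with $\chi_p(\alpha)=\varepsilon_{q+1}$ (so $(q-1)/p$ choices); Theorem~\ref{conjp} again gives the count of the relevant affine pattern.
\end{itemize}
In both cases the main terms collapse to the same expression
\[
\left(\tfrac{p-1}{q+p-1}\right)^{m}\left(\tfrac{q}{p(q+p-1)}\right)^{q+1-m}
\]
after dividing by $|\widehat{\mathcal F}_{[d_1,\ldots,d_{p-1}]}|$, which is itself computed by the $\ell=0$ version of Theorem~\ref{conjp} summed over the $p$ pieces. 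This matches $\Prob(X_1=\varepsilon_1,\ldots,X_{q+1}=\varepsilon_{q+1})$ for the given random variables.

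Finally I sum over all $(q+1)$-tuples with $\varepsilon_1+\cdots+\varepsilon_{q+1}=s$ to obtain the stated probability, and keep track of the combined error. The error from Theorem~\ref{conjp} is $q^{\varepsilon(d_2+\cdots+d_{p-1}+q)+(1-\varepsilon)(m-1)}(q^{-d_2}+\cdots+q^{-d_{p-1}})+q^{-(d_1-m+1)/2+q}$ in Case~1 and the analogous bound in Case~2; absorbing the $m$-dependence into $m\le q+1$ yields the error in the theorem.

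The main obstacle will be the bookkeeping in Case~1: one must verify that combining the $p-1$ different degree patterns (each coming from dropping a factor $F_j$ from the generic signature) gives the same leading constant up to the factor that matches Case~2, and that the error terms are uniformly controlled. This was already the mildly delicate step in Theorem~\ref{componentd1d2}, where $p-1=2$; for general $p$ one needs the constant $L_{p-2}$ and the extra factor $(q+p-1)(q-1)/q$ in the denominator of $|\widehat{\mathcal F}_{[d_1,\ldots,d_{p-1}]}|$ to align precisely so that the probabilities match. Once this alignment is verified, the proof is essentially a formal translation of Section~\ref{geometry}.
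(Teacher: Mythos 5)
Your proposal is correct and is essentially the paper's own argument: the paper proves Theorem \ref{comp-gen} exactly by transplanting the proof of Theorem \ref{componentd1d2} (the moduli count $\left|\mathcal H^{(d_1,\ldots,d_{p-1})}\right|'$ via the $p$ affine families, the two-case split on $\chi_p(F(\infty))$, the input from Theorem \ref{conjp} and the unnormalized counts behind it, and the summation over tuples with $\varepsilon_1+\cdots+\varepsilon_{q+1}=s$), and your bookkeeping of the matching main terms and of the error (using that at most $q$ of the affine values vanish) is the same as theirs.
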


Note that our random variables are complex-valued and have the property that
\begin{equation*}\mathbb E(X_i^b\overline{X_i}^c) = \begin{cases}
0 & b \not \equiv c \, (\mathrm{mod}\, p) ,\\
\displaystyle \frac{q}{q+p-1} & b \equiv c \, (\mathrm{mod}\, p).
\end{cases}\end{equation*}

Computing the mixed moments of the trace, one sees that they approach the moments of the normalized sum of random variables $(X_1+\ldots+X_{q+1})/\sqrt{q+1}.$ Namely, for each $j,k\geq 0,$ denote

\[M_{j,k}(q, (d_1, \ldots, d_{p-1}))= \frac{1}{|\mathcal H^{(d_1, \ldots, d_{p-1})}|'} \sum_{C \in \mathcal H^{(d_1, \ldots, d_{p-1})}} \left( \frac{-\Tr
(\Frob_C|_{H^1_{\chi_p}})}{\sqrt{q+1}} \right)^j \left( \frac{-\Tr
(\Frob_C|_{H^1_{\overline\chi_p}})}{\sqrt{q+1}} \right)^k.  \]

A similar computation to the one in the proof of Theorem \ref{momentsthm} yields

\begin{eqnarray}\label{momgen}
\nonumber M_{j,k}(q, (d_1, \ldots, d_{p-1}))&=&  \mathbb E
\left( \left( \frac{1}{\sqrt{q+1}} \sum_{i=1}^{q+1} X_i \right)^j\left( \frac{1}{\sqrt{q+1}} \sum_{i=1}^{q+1} \overline{X_i} \right)^k
\right)\\
& \times& \left( 1 + O\left(q^{\varepsilon(d_2+\dots+d_{p-1}+j+k)} \left(q^{-d_{2}}+\dots +q^{-d_{p-1}}\right) + q^{-d_1/2+j+k}\right)\right).
\end{eqnarray}

Writing each random variable $X_j$ in terms of its real and imaginary part, $X_j=A_j+\sqrt{-1}B_j,$ we obtain that $\mathbb E(A_j)=\mathbb E (B_j) = 0$ and $\mathbb E(A_j^2)=\mathbb E (B_j^2) = q/(2(q+p-1)).$ Applying the Triangular Central Limit Theorem, we obtain, by the same arguments as in the proof of Corollary \ref{momentscor}, that the limiting distribution of the normalized $(X_1+\ldots+X_{q+1})/\sqrt{q+1}$ is a complex Gaussian with mean $0$ and variance $1$. Together with \eqref{momgen} this fact implies the following result.

\begin{theorem}\label{conjmomp}

As $q, d_1, \ldots, d_{p-1} \rightarrow \infty,$
\[\displaystyle \frac{1}{\sqrt{q+1}} \Tr(\Frob_C|_{H^1_{\chi_p}})\]
has a complex Gaussian distribution with mean $0$ and variance $1$ as $C$ varies in $\mathcal H^{(d_1, \ldots, d_{p-1})}(\mathbb F_q).$
\end{theorem}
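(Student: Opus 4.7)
The plan is to apply the method of moments, using the mixed moment formula \eqref{momgen} combined with the convergence of the sum of the i.i.d.\ random variables $X_j$ to a complex Gaussian.

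First, I would invoke \eqref{momgen}, which expresses $M_{j,k}(q,(d_1,\dots,d_{p-1}))$ as the $(j,k)$-moment of the normalized sum $(X_1+\cdots+X_{q+1})/\sqrt{q+1}$ up to a multiplicative error
\[
1 + O\!\left(q^{\varepsilon(d_2+\cdots+d_{p-1}+j+k)}\!\left(q^{-d_2}+\cdots+q^{-d_{p-1}}\right) + q^{-d_1/2+j+k}\right).
\]
For any fixed pair $(j,k)$, picking $\varepsilon>0$ small enough and letting $q, d_1,\dots,d_{p-1}\to\infty$ (with all $d_i$ growing, as in the hypothesis) makes this error tend to zero, so it suffices to identify the limiting moments of $(X_1+\cdots+X_{q+1})/\sqrt{q+1}$.

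Next, I would mimic exactly the argument in the proof of Corollary \ref{momentscor}. Writing $X_j=A_j+\sqrt{-1}B_j$ and identifying $X_j$ with the vector $(A_j,B_j)^T\in\mathbb R^2$, one uses the already-established relations $\mathbb E(X_j)=0$, $\mathbb E(X_j^2)=0$, and $\mathbb E(|X_j|^2)=q/(q+p-1)$ to deduce
\[
\mathbb E(A_j)=\mathbb E(B_j)=\mathbb E(A_jB_j)=0,\qquad \mathbb E(A_j^2)=\mathbb E(B_j^2)=\frac{q}{2(q+p-1)}.
\]
Since the covariance matrix is diagonal with nonzero diagonal entries (tending to $\tfrac12 I_2$ as $q\to\infty$), the Triangular Central Limit Theorem applies to the two-dimensional triangular array, giving that $(X_1+\cdots+X_{q+1})/\sqrt{q+1}$ converges in distribution to the two-dimensional Gaussian with density $\frac{1}{\pi}e^{-|z|^2}\,dz$, i.e.\ the standard complex Gaussian. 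In particular all its mixed moments converge to those of the standard complex Gaussian.

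Combining the two steps, the mixed moments $M_{j,k}(q,(d_1,\dots,d_{p-1}))$ converge, for every fixed $j,k\geq 0$, to the $(j,k)$-moments of the standard complex Gaussian. Since a Gaussian measure is determined by its moments, the method of moments (as used in \cite[Section 30]{Bi}) yields the stated convergence in distribution. The only genuine subtlety will be verifying the interchange of the two limits hidden in the simultaneous passage $q,d_1,\dots,d_{p-1}\to\infty$: the error term must decay uniformly in $q$ for each fixed $(j,k)$, which is precisely why the $\varepsilon$-dependence was tracked carefully in Theorem \ref{conjp} and propagated through Corollary \ref{fixedzeros} into \eqref{momgen}. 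This is the main technical obstacle, and it is handled exactly as in the trigonal case by choosing $\varepsilon$ small and using that the $d_i$ go to infinity independently of the fixed moment indices.
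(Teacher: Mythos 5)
Your proposal is correct and follows essentially the same route as the paper: combine the mixed-moment asymptotic \eqref{momgen} (with the $q$-uniform error terms) with the Triangular Central Limit Theorem applied to the real and imaginary parts of the $X_j$, exactly as in the proof of Corollary \ref{momentscor}, and then conclude by the method of moments since the complex Gaussian is determined by its moments. Your explicit remarks about the boundedness of the $X_j$ and the uniformity of the error in $q$ only make precise steps the paper leaves implicit.
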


\section{Heuristic}
\label{heuristic}

We give in this section a heuristic which explains the probabilities
occurring in
 Theorems \ref{KRrevisitedthm}, \ref{componentd1d2} and \ref{comp-gen}.
\subsection{Heuristic for hyperelliptic curves} \label{heuristichyper}

 We first give a heuristic explaining the results of Lemma \ref{KRproposition6} (Proposition 6 in \cite{KR}). To model square-free polynomials,
 we consider  polynomials with no double root in $\mathbb F_q$. That is, fix points
$x_1, \ldots, x_{\ell+m}$ and count the monic polynomials of
degree $d$ that are not divisible by $(X-x_i)^2$ for any $1 \leq i
\leq \ell+m.$ Assume that $d \gg \ell+m.$ Then by the Chinese Remainder Theorem, the number of such polynomials is the number of monic
polynomials of degree $d$ multiplied by a factor of $(1-q^{-2})$ for
each condition. There are $\ell+m$ conditions, so there are $q^d(1-q^{-2})^{\ell+m}$ polynomials
of degree $d$ which are not divisible by $(X-x_i)^2$ for any $1 \leq i \leq \ell+m$.
We now fix $a_1, \ldots, a_\ell \in \mathbb F^* _q$ and $a_{\ell+1},
\ldots , a_{\ell+m}=0$, and count the number of polynomials defined above
satisfying $F(x_i) = a_i.$ For $i \geq \ell+1$, we want
$F(X) \equiv 0 \bmod (X-x_i)$, and there are $(q-1)$ such residues modulo $(X-x_i)^2$
among the $q^2-1$ residues not congruent to 0 modulo $(X-x_i)^2$.
For $i \leq \ell$, we want
$F(X) \equiv a_i \bmod (X-x_i)$, and there are $q$ such residues modulo $(X-x_i)^2$
among the $q^2-1$ residues not congruent to 0 modulo $(X-x_i)^2$.
Using the Chinese Remainder Theorem, this shows that
\begin{eqnarray}
\nonumber
&&\frac{\left|\left\{F\in\mathbb F_q[X]: \deg F = d, F \textrm{ monic, } (X-x_i)^2 \nmid F, F(x_i) = a_i \right\}\right|}{\left| \left\{F\in\mathbb F_q[X]: \deg F = d, F \textrm{ monic, } (X-x_i)^2 \nmid F \right\}\right|} \\ \nonumber
&&= \left( \frac{q-1}{q^2-1} \right)^m \left( \frac{q}{q^2-1}
\right)^\ell  = \frac{(1-q^{-1})^m q^{-(\ell+m)} } {(1-q^{-2}) ^{\ell+m}},
\end{eqnarray}
which is the main term in Lemma \ref{KRproposition6}.  Then in some way, imposing  the square-free condition cuts
uniformly across these sets, and being square-free is an
event independent of imposing values at a finite number of points.  The error term occurs because if one interprets the square-free condition as a collection of conditions indexed by irreducible polynomials, these individual conditions are only jointly independent in small numbers.

We now illustrate how the above heuristic also explains the probabilities of Theorem \ref{KRrevisitedthm}. This is very similar to the computation of Section \ref{KRrevisited}. As there, we now use the set of (not necessarily monic) polynomials of degree $2g+1$ and $2g+2$. There are $q^{2g+3}(1-q^{-2}) ^{\ell+m+1}$ such polynomials  $F \in \mathbb F_q[X]$ with no double zeros at the points $x_1, \ldots, x_{\ell+m}$. Denoting the point at infinity by $x_{\ell+m+1}$, we have to compute

\begin{eqnarray} \label{computation}
\frac{\left|\left\{F\in\mathbb F_q[X]: \begin{array}{l}2g+1 \leq \deg F \leq 2g+2, (X-x_i)^2 \nmid F, 1\leq i \leq \ell+m \\ F(x_i) = a_i, 1\leq i\leq {\ell+m+1}\end{array} \right\}\right|}{\left| \left\{F\in\mathbb F_q[X]:2g+1 \leq \deg F \leq 2g+2, (X-x_i)^2 \nmid F, 1 \leq i \leq \ell+m \right\}\right|}. \end{eqnarray}
If $F(x_{\ell+m+1})=0$, which is equivalent to $\deg(F)=2g+1$, then the numerator of \eqref{computation} is equal to
$
q^{2g+1-2(\ell+m)}(q-1)q^\ell (q-1)^m.
$
Similarly, if $F(x_{\ell+m+1})\neq 0$, which is equivalent to $\deg(F)=2g+2$, the numerator of \eqref{computation} is equal to
$
q^{2g+2-2(\ell+m)}q^\ell (q-1)^m.
$
This shows that \eqref{computation} is
equal to
\[
 \begin{cases}
 \left( \frac{1}{q+1} \right)^{m+1} \left( \frac{q}{q^2-1} \right)^{\ell} & \text{ if } F(x_{\ell+m+1})=0,
 \\
 \left( \frac{1}{q+1} \right)^{m} \left( \frac{q}{q^2-1} \right)^{\ell+1} & \text{ if } F(x_{\ell+m+1})\neq 0.
 \end{cases}
 \]

This is the geometric version of the main term in Lemma \ref{KRproposition6}. To see that, let $x_1, \dots, x_{q+1}$ be the points of $\mathbb{P}^1(\F_q)$, let $a_1, \dots, a_{q+1} \in \F_q$ and let $m$ be the number of zeros among the values $a_1, \dots, a_{q+1}$. Then \eqref{computation} writes as
\[
\left(\frac{1}{q+1}\right)^m\left(\frac{q}{q^2-1}\right)^{q+1-m}=\frac{(1-q^{-1})^m q^{-(q+1)}}{(1-q^{-2})^{q+1}},
\]
and the probabilities of Theorem \ref{KRrevisitedthm} follow with the usual argument.

\subsection{Heuristic for general case}\label{htri}

The same heuristic can be used to explain the result one gets for curves $C$ that have a cyclic $p$-to-$1$ map to $\mathbb P^1(\mathbb F_q)$,

\begin{lemma}
The number of $(p-1)$-tuples $(F_1, \ldots, F_{p-1})$ of nonzero residues modulo $(X-t)^2$ such that $(X-t)$ does not divide $F_i$ and $F_j$ for any $i\neq j$ is $q^{p-2}(q-1)^{p-1} (q+p-1)$.
\end{lemma}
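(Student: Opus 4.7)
The plan is to parametrize residues modulo $(X-t)^2$ explicitly. Every residue class of $\mathbb F_q[X]/(X-t)^2$ can be uniquely written as $a+b(X-t)$ with $a,b\in\mathbb F_q$, so there are $q^2$ classes in total, of which $q^2-1$ are nonzero. Under this parametrization, $F_i=a_i+b_i(X-t)$ is divisible by $(X-t)$ exactly when $a_i=0$. I would first record the two basic tallies: the number of nonzero residues with $(X-t)\mid F_i$ is $q-1$ (namely $a_i=0$, $b_i\neq 0$), while the number of nonzero residues with $(X-t)\nmid F_i$ is $q(q-1)$ (namely $a_i\neq 0$, $b_i$ free).

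Next I would reinterpret the pairwise-coprimality hypothesis: the condition that $(X-t)$ not divide any two of the $F_i$ is equivalent to saying that at most one index $i$ satisfies $a_i=0$. This splits the count into two mutually exclusive cases.

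In Case 1 (no $i$ has $a_i=0$), each of the $p-1$ factors independently contributes $q(q-1)$ residues, yielding $q^{p-1}(q-1)^{p-1}$ tuples. In Case 2 (exactly one index has $a_i=0$), I choose the distinguished index in $p-1$ ways, that slot contributes $q-1$ residues, and the remaining $p-2$ slots each contribute $q(q-1)$ residues, yielding $(p-1)(q-1)\cdot q^{p-2}(q-1)^{p-2}=(p-1)q^{p-2}(q-1)^{p-1}$ tuples.

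Summing and factoring out $q^{p-2}(q-1)^{p-1}$ gives the total
\[
q^{p-2}(q-1)^{p-1}\bigl(q+(p-1)\bigr)=q^{p-2}(q-1)^{p-1}(q+p-1),
\]
which is the claimed count. There is no genuine obstacle here; the only thing to be careful about is correctly separating the roles of the constant coefficient $a_i$ (which controls divisibility by $(X-t)$) and the linear coefficient $b_i$ (which only controls whether the residue is nonzero once $a_i=0$), and in particular not to double-count the case of two or more $a_i$ vanishing, which is excluded by the hypothesis.
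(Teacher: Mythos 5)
Your proof is correct and is essentially the same counting as the paper's: both stratify tuples by how many of the $F_i$ are divisible by $(X-t)$. The only difference is that you count the two allowed cases (zero or one index with $a_i=0$) directly, whereas the paper subtracts the forbidden cases ($k\geq 2$ indices divisible) from the total $(q^2-1)^{p-1}$ and invokes the binomial formula; the two computations are equivalent.
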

\begin{proof}  Denote by  $\mathcal S_t$ the set of such tuples. The total number
of  $(p-1)$-tuples of nonzero residues modulo  $(X-t)^2$ is
$(q^2-1)^{p-1}$. For each integer  $1\leq k \leq p-1$, denote
\[\mathcal B_k = \left\{ (F_1, \ldots, F_{p-1}) : (X-t) \textrm{ divides exactly $k$ of the $F_i$, $(X-t)^2$ does not divide any $F_i$}   \right\} .\]
Then \[|\mathcal S_t | = (q^2-1)^{p-1} - \sum_{k=2}^{p-1} |\mathcal
B_k|.  \] It is easy to see that $|\mathcal B_k| = \binom{p-1}{k}
(q-1)^k (q^2 - q)^{p-1-k},$ and the lemma follows by
using the binomial formula.
\end{proof}

The number of $(p-1)$-tuples in $\mathcal S_t$  such that $F=F_1F_2^2\dots F_{p-1}^{p-1}$ takes the value $a\in \F_q^*$ is equal to $q^{p-1}(q-1)^{p-2}$. It follows that the number of $(p-1)$-tuples in $\mathcal S_t$ such that $F=F_1F_2^2\dots F_{p-1}^{p-1}$ takes the value $0$ is equal to $|\mathcal S_t|-q^{p-1}(q-1)^{p-1}=(p-1)q^{p-2}(q-1)^{p-1}$. Therefore the probability that $F=F_1F_2^2\dots F_{p-1}^{p-1}$ takes a
value $a \in \F_q$ at a point $t$ is
\begin{eqnarray} \label{probgeneral}
\left\{ \begin{array}{ll} \displaystyle\frac{p-1}{q+p-1} & \mbox{if $a=0$}, \\
\\
\displaystyle\frac{q}{(q-1)(q+p-1)} & \mbox{if $a \in \F_q^*$}. \end{array} \right.
\end{eqnarray}
This explains the result of Corollary \ref{formoments} and  Theorem \ref{conjp}.

 Finally, for Theorems \ref{componentd1d2} and \ref{comp-gen}, we note that taking the point at infinity into consideration works just like in Section \ref{heuristichyper} and we get that for any enumeration $x_0, \ldots, x_{q}$ of  $\mathbb P^1(\mathbb F_q)$ and any $\varepsilon_0, \ldots, \varepsilon_{q}$ that are either zero or $p$-th roots of unity,
 \[\Prob \left(\chi(F(x_i)) = \varepsilon_i, 0\leq i \leq q \right) = \Prob\left( X_i =  \varepsilon_i, 0\leq i \leq q \right),\]
where $X_0, \ldots, X_{q}$ are i.i.d. random variables taking the value $0$ with probability $(p-1)/(q+p-1)$ and each root of unity with probability $q/(p(q+p-1)).$

{\bf Acknowledgments}. 
This work was initiated at the Banff workshop ``Women in Numbers'' organized  by Kristin Lauter, Rachel Pries, and Renate Scheidler in November 2008, and the authors would like to thank the organizers and BIRS for excellent working conditions. The authors also wish to thank Eduardo Due\~nez, Nicholas Katz, Kiran Kedlaya, P\"{a}r Kurlberg, Lea Popovic, Rachel Pries, and Ze\'{e}v Rudnick  for helpful discussions related to this work.

This work was supported by the Natural Sciences and Engineering Research Council of Canada [B.F., Discovery Grant 155635-2008 to C.D., 355412-2008 to M.L.]; the National Science Foundation of U.S. [DMS-0652529 to A.B.]; and the University of Alberta [Faculty of Science Startup grant to M.L.]

\end{document}